\newtheorem{theorem}{Theorem}[section]
\newtheorem{lemma}[theorem]{Lemma}
\newtheorem{proposition}[theorem]{Proposition}
\newtheorem{corollary}[theorem]{Corollary}
\newtheorem{example}[theorem]{Example}
\newtheorem{remark}[theorem]{Remark}
\newtheorem*{problem*}{Problem}
\numberwithin{equation}{section}
\def\D{\mathbb{D}}
\def\T{\mathbb{T}}
\newcommand{\clq}{\mathcal{Q}}
\newcommand{\cls}{\mathcal{S}}
\newcommand{\clz}{\mathcal{Z}}
\newcommand{\raro}{\rightarrow}
\newcommand{\vp}{\varphi}
\begin{document}

\title[Model spaces invariant under composition operators]{Finite-dimensional model spaces invariant under composition operators}

\author[Muthukumar, Jaydeb, and Batzorig]{P. Muthukumar, Jaydeb Sarkar, and Batzorig Undrakh}
\address{Department of Mathematics and Statistics \\
  Indian Institute of Technology Kanpur\\
  Kanpur,  Uttar Pradesh -208 016\\
  India} \email{pmuthumaths@gmail.com, muthu@iitk.ac.in}

\address{Indian Statistical Institute,
Statistics and Mathematics Unit \\
 8th Mile, Mysore Road\\
Bangalore 560 059\\ India} \email{jaydeb@gmail.com, jay@isibang.ac.in}

\address{Department of Mathematics \\
  National University of Mongolia \\
  Ulaanbaatar \\
  Mongolia} \email{undrakhbatzorig@gmail.com, batzorig\_u@num.edu.mn}

\subjclass[2010]{47B33, 30H10, 30J10, 46E15, 47A15, 20C05}


\keywords{Blaschke products, M\"{o}bius transformations, model spaces, cyclic groups, finite-dimensional spaces}

\begin{abstract}
Finite-dimensional model spaces are quotient spaces of the Hardy space on the open unit disc, determined by finite Blaschke products. Composition operators, on the other hand, act by composing Hardy space functions with analytic self-maps of the open unit disc. Both are classical and well-studied objects in the theory of analytic function spaces. In this paper, we present a complete characterization of finite-dimensional model spaces that are invariant under composition operators. Finite cyclic groups and the prime factorizations of natural numbers play a crucial role in understanding the structure of such invariant subspaces and the associated analytic self-maps.
\end{abstract}

\maketitle

\tableofcontents

\section{Introduction}\label{sec:introduction}

The genesis of this paper lies in the interplay between two natural and widely studied analytic objects: composition operators and finite-dimensional spaces associated with Blaschke products. The latter class of spaces is known as model spaces (more specifically, as finite-dimensional model spaces).

Unbeknownst to Mashreghi and Shabankhah \cite{Jav}, the first and second authors in \cite{Mut} studied this problem in broad generality, yielding abstract results and leaving many questions unresolved even at the level of finite-dimensional model spaces. In this paper, we revisit the results of \cite{Jav} and \cite{Mut} and present them in a more unified and broader framework at the level of finite-dimensional model spaces. The results presented in this paper provide a comprehensive treatment of the subject. This paper also identifies and corrects certain errors in \cite{Jav}, sharpens some of its results and proofs, and answers a question posed therein. Nevertheless, some of the groundwork and underlying notions (such as group-theoretic tools) used in this paper originate from \cite{Jav} (as well as from \cite{Mut} to some extent). In our analysis, we combine finite group-theoretic methods with insights drawn from the prime factorizations of natural numbers, specifically those corresponding to the sizes of the zero sets of finite Blaschke products. We now proceed to introduce the key concepts of the paper.

For each $\lambda$ in the open unit disc $\D = \{z \in \mathbb{C}: |z| < 1\}$, we define the corresponding \textit{Blaschke product} $b_\lambda$ as follows:
\[
b_\lambda(z) = \frac{z - \lambda}{1 - \bar{\lambda} z},
\]
for all $z \in \D$. Given $\lambda_1, \ldots, \lambda_n \in \D$, we define the \textit{finite Blaschke product}
\[
\theta = \prod_{j=1}^{n} b_{\lambda_j}.
\]
The \textit{model space} (or, \textit{finite-dimensional model space} to be more specific) associated with $\theta$ is defined as
\[
\clq_{\theta} = H^2 \ominus \theta H^2,
\]
where $H^2$ denotes the Hardy space on $\D$. It is known that
\[
\text{dim} \clq_\theta = n.
\]
The other key object in our study is the composition operator. Given an analytic self-map $\vp: \D \raro \D$ (in short, $\vp \in \cls(\D)$), the \textit{composition operator} $C_\vp$ is defined on $H^2$ by
\[
C_\vp f = f \circ \vp \qquad (f \in H^2).
\]
It is a classical result that $C_\varphi$ is a bounded linear operator on $H^2$ for all $\varphi\in \mathcal{S}(\mathbb{D})$ \cite[Chapter 1]{Shapiro}. The goal of this paper is to determine finite Blaschke products $\theta$ and analytic self-maps $\vp \in \cls(\D)$ such that
\[
C_\vp \clq_\theta \subseteq \clq_\theta,
\]
That is, the finite-dimensional model spaces $\clq_\theta$ that are invariant under the composition operator $C_\vp$. More specifically, our aim is to study the following object:
\[
D(\clq_\theta) = \left\{\varphi \in \cls(\D): C_\varphi \clq_\theta \subseteq \clq_\theta\right\}.
\]
Given that the elements of $\clq_\theta$ are rational functions, one can consider $\clq_\theta$ as a function space defined on the extended complex plane; that is, the Riemann sphere $\mathbb{C}_\infty = \mathbb{C} \cup \{\infty\}$. This perspective suggests another object similar to $D(\clq_\theta)$, defined as follows:
\[
L(\clq_\theta) = \left\{\varphi: \mathbb{C}_\infty \to \mathbb{C}_\infty
\text{ analytic},  \varphi \not \equiv \infty, \text{ and } C_\varphi \clq_\theta \subseteq \clq_\theta \right\}.
\]
Observe that rational functions are precisely the functions holomorphic at $\infty$; hence, every element of $L(\clq_\theta)$ is necessarily a rational function (also see \cite[Section 1]{Jav}). The motivation for these objects comes from \cite{Jav} (and also \cite{Mut}). Three natural questions arise immediately:

\begin{enumerate}
\item What are the elements of the set $D(\clq_\theta)$?
\item What type of structure do these elements form?
\item The same questions as (1) and (2) above, but considered in the setting of $L(\clq_\theta)$.
\end{enumerate}

There are many other questions that arise naturally. For instance, in \cite{Jav}, Mashreghi et al. posed the problem of classifying finite Blaschke products $\theta$ for which
\[
L(\clq_\theta)\neq \{z\},
\]
where $z$ denotes the identity map. Of course, the same question makes sense in the case of $D(\clq_\theta)$: for which Blaschke products $\theta$ does one have $D(\clq_\theta)\neq \{z\}$? We address all of these questions and show that the answers vary from case to case, as is typical in the theory of composition operators. In particular, as we will see, the results naturally divide into two classes: those corresponding to finite Blaschke products that vanish at the origin and those that do not. Following \cite{Jav}, we also discuss the group structure associated with them. As
\[
z \in L(\clq_\theta) \cap D(\clq_\theta),
\]
it follows that $L(\clq_\theta)$ and $D(\clq_\theta)$ are always nonempty. In fact, for any finite Blaschke product $\theta$ other than rotations (see Remark \ref{remark11} for rotations), we have (see Proposition \ref{prop: D sub L})
\[
D(\clq_\theta) \subseteq L(\clq_\theta).
\]
It is easy to see that $L(\clq_\theta)$ and $D(\clq_\theta)$ are semigroups. We now outline some of the main results of this paper: We begin by considering the case where $\theta$ is a finite Blaschke product with a single zero of arbitrary multiplicity:
\[
\theta = b_{\lambda}^n,
\]
for some $\lambda \in \mathbb{D} \setminus \{0\}$ and $n \in \mathbb{N}$. In Proposition \ref{single}, we recollect from \cite[Theorem 2.1]{Jav} that
\[
L(\clq_\theta)= \left\{(1-\bar{\lambda}a)z+a: a \in \mathbb{C}, a\neq \frac{1}{\bar{\lambda}}\right\},
\]
and point out that
\[
D(\clq_\theta)= \left\{z\right\}.
\]
In particular, $D(\clq_\theta)$ is a trivial group, whereas $L(\clq_\theta)$ is a noncyclic infinite group.  At the other extreme, in Theorem \ref{thm: equality}, we prove that for a finite Blaschke product $\theta$ with $\theta(0)\neq 0$ and $D(\clq_\theta)\neq \{z\}$, one has
\[
D(\clq_\theta)=L(\clq_\theta).
\]

For a general finite Blaschke product $\theta$ that is nonvanishing at the origin, the following holds (see Theorem \ref{thm1}): $\varphi \in D(\clq_\theta)$ if and only if there exists a constant $\alpha \in \T$ such that $\varphi(z)=\alpha z$ with
\[
mult_\theta (\lambda) = mult_{\theta}(\bar{\alpha} \lambda),
\]
for all $\lambda \in \clz(\theta)$. For an analytic function $f$ on $X \subseteq \mathbb{C}$, we denote its zero set by
\[
\clz(f)=\left\{\alpha \in X: f(\alpha)=0\right\},
\]
and denote by $mult_f(\alpha)$ the multiplicity of $\alpha$ as a zero of $f$. Note that $mult_f(\alpha)=0$ indicates that $f(\alpha)\neq 0$. The above result also should be attributed to Mashreghi et al. \cite[Corollary 2.4]{Jav}. However, our presentation, proof, and perspective are different. This formulation is also best suited to our framework (see the discussions preceding and following Theorem \ref{thm1}). For instance, from the above, it is now clear that rotations are the appropriate candidates for the set $D(\clq_\theta)$.

A question arises: which subsets of $\T$ give rise to such a set of rotations? Moreover, how can such subsets be related to the finite Blaschke product $\theta$? Within the same setting, we obtain the following answer to this question (see Corollary \ref{coro1}):

\begin{enumerate}
\item $\alpha z \in D(\clq_\theta)$ if and only if $\bar{\alpha} z \in D(\clq_\theta)$.
\item $\alpha z \in D(\clq_\theta)$ if and only if
$mult_\theta (\lambda)= mult_{\theta}(\alpha \lambda)$ for all  $\lambda \in \clz(\theta)$.
\item $\alpha z \not \in D(\clq_\theta)$ if and only if there exists an $\lambda \in \clz(\theta)$ such that $mult_\theta (\lambda)\neq mult_{\theta}(\alpha \lambda)$.
\end{enumerate}

The choice of scalars $\alpha$ in $\T$ above admits a group-theoretic interpretation, which can be stated as follows (see Theorem \ref{thm2}): Assume that
\[
\theta = \prod\limits_{i=1}^n b_{\lambda_i}^{m_i},
\]
for distinct $\{\lambda_1, \ldots, \lambda_n\} \subseteq \mathbb{D} \setminus \{0\}$ and natural numbers $m_i$, $i=1, \ldots, n$. Then $\varphi\in D(\clq_\theta)$ if and only if $\varphi=\omega z$, where
\[
\bar{\omega}=\frac{\lambda_{\sigma(1)}}{\lambda_1}=\frac{\lambda_{\sigma(2)}}{\lambda_2}=\dots=\frac{\lambda_{\sigma(n)}}{\lambda_n},
\]
for some $\sigma \in S_n$ with $m_i=m_{\sigma(i)}$ for all $i \in \{1, \ldots, n\}$. Moreover, in this case, we have
\[
D(\clq_\theta)=\langle e^{\frac{2\pi i}{d}}z \rangle,
\]
for some divisor $d$ of $n =\# \clz(\theta)$ (see Corollary \ref{coro3}). Here we follow the standard notation: We use $\langle e^{\frac{2\pi i}{m}} z \rangle$, $m \in \mathbb{N}$, to denote the finite cyclic group
\[
\left\{e^{\frac{2\pi i t}{m}} z: t = 0, 1, \ldots, m-1\right\},
\]
under composition generated by $e^{\frac{2\pi i}{m}} z$ (note that $ e^{\frac{2\pi i}{m}}$ is the primitive $m$-th root of unity). We also denote by $S_n$ the symmetric group on $n$ letters.

This raises a number of natural questions, many of which have been both posed and addressed in this paper. One problem we highlight here is the following: given $\theta$ as above, under what conditions do we have
\[
D(\clq_\theta) = \langle e^{\frac{2\pi i}{n}}z\rangle?
\]
In Theorem \ref{thm4}, we prove that $D(\clq_\theta) = \langle e^{\frac{2\pi i}{n}}z\rangle$ if and only if
\[
\left\{\dfrac{\lambda_1}{\lambda_1}, \dfrac{\lambda_2}{\lambda_1}, \ldots, \dfrac{\lambda_n}{\lambda_1}\right\},
\]
is a multiplicative group, and
\[
m_1=m_2=\dots=m_n.
\]

We now turn to the question of determining when $L(\clq_\theta)=\{z\}$ as well as $D(\clq_\theta)=\{z\}$. We provide the following answer (see Theorem \ref{thm7}): Let $\theta$ be a finite Blaschke product. Assume that $\theta(0)\neq 0$. Consider the prime factorization of $n := \# \clz(\theta)$ as
\[
n=p_1^{k_1} \dots p_m^{k_m}.
\]
Then
\[
D(\clq_\theta)=\{z\},
\]
if and only if for each $j \in \{1, \ldots, m\}$, there exists $\lambda_j \in \clz(\theta)$ such that
\[
mult_\theta (\lambda_j)\neq mult_{\theta}\left(e^{\frac{2\pi i}{p_j}} \lambda_j\right).
\]
Moreover, in Theorem \ref{thm - triv L}, we prove the following: Given a finite Blaschke product $\theta$, we have $L(\clq_\theta)=\{z\}$ if and only if the following conditions hold:
\begin{enumerate}
\item $\theta(0)\neq 0$.
\item $\#\clz(\theta) \geq 2$.
\item For every pair of constants $(a,b)$ other than $(1,0)$, there exists $\lambda \in \clz(\theta)$ such that
\[
mult_\theta (\lambda)\neq mult_{\theta}\left(\frac{\bar{a} \lambda}{1-\bar{b} \lambda}\right).
\]
\end{enumerate}

The situation changes when we shift our focus to finite Blaschke products that vanish at the origin. To illustrate this, we outline the following results, as observed in Corollary \ref{cor: z b lambda}: For $\lambda \in \mathbb{D} \setminus \{0\}$ and $n\geq 1$, define
\[
\theta = z b_{\lambda}^n.
\]
Then
\[
L(\clq_\theta)= \left\{\vp \in  \text{Mob}(\mathbb{C}_\infty): \vp\left(\frac{1}{\bar{\lambda}}\right) = \frac{1}{\bar{\lambda}} \right\} \cup
\left(\mathbb{C} \setminus \left\{\frac{1}{\bar{\lambda}}\right\}\right),
\]
and
\[
D(\clq_\theta)= \left\{\vp \in \cls(\D) \cap  \text{Mob}(\mathbb{C}_\infty): \vp\left(\frac{1}{\bar{\lambda}}\right) = \frac{1}{\bar{\lambda}} \right\} \cup \mathbb{D}.
\]
These results are comparable to Theorem 2.5 and Corollary 2.6 of \cite{Jav}. Moreover, in the above setting, we have new information:
\[
D(\clq_\theta)  \cap  \text{Aut}(\mathbb{D}),
\]
is uncountable, and $L(\clq_\theta)\setminus \mathbb{C}$ is a non-cyclic infinite group. Here, $ \text{Mob}(\mathbb{C}_\infty)$ denotes the group of all M\"{o}bius transformations of the extended complex plane $\mathbb{C}_\infty = \mathbb{C} \cup \{\infty\}$.

However, for a finite Blaschke product with a zero at the origin of higher multiplicity, we encounter a markedly different scenario once again (see Theorems \ref{thm m geq 2 lambda} and \ref{thm m geq 2 lambda-2}): Let $m\geq 2$, $n\geq 1$ be natural numbers and let $\lambda \in \D \setminus \{0\}$. Define
\[
\theta = z^m b_\lambda^n.
\]
If $m-1\neq n$, then
\[
L(\clq_\theta)= \left\{(1-\bar{\lambda}a)z+a: a\neq \frac{1}{\bar\lambda} \right\} \cup
\left(\mathbb{C} \setminus \left\{\frac{1}{\bar{\lambda}}\right\}\right),
\]
and
\[
D(\clq_\theta) =  \left\{z\right\} \cup \D.
\]
If $m-1= n$, then
\[
L(\clq_\theta)= \left\{(1-\bar{\lambda}a)z+a:a\neq \frac{1}{\bar{\lambda}} \right\} \cup
\left\{\frac{c-z}{1-\bar{\lambda}z}:c\neq \frac{1}{\bar{\lambda}} \right\} \cup
 \left(\mathbb{C} \setminus \left\{\frac{1}{\bar{\lambda}}\right\}\right),
\]
and
\[
D(\clq_\theta) =L(\clq_\theta)\cap \cls(\D) =  \left\{z\right\} \cup \left\{\frac{\lambda-z}{1-\bar{\lambda}z} \right\} \cup \D.
\]
In particular, for $\theta = z^m b_\lambda^n$, $m\geq 2$, $n\geq 1$, we have the following (see Corollary \ref{cor: m 2 n 1 lambda}):
\begin{enumerate}
\item $L(\clq_\theta)^*$ is an uncountable group.
\item If $m-1\neq n$, then $D(\clq_\theta)^*$ is a trivial group.
\item If $m-1=n$, then $D(\clq_\theta)^*$ is a cyclic group of order $2$.
\end{enumerate}
In the above and what follows, $L(\clq_\theta)^*$ and $D(\clq_\theta)^*$ refer to the sets of all nonconstant functions in $L(\clq_\theta)$ and $D(\clq_\theta)$, respectively.

Given a M\"{o}bius transformation $\varphi(z)=\dfrac{az+b}{cz+d}$, we construct another M\"{o}bius transformation $\tilde{\vp}$ as (see Section \ref{sec: mob})
\[
\tilde{\varphi}(z)=\dfrac{\bar{a}z-\overline{c}}{-\bar{b}z+\bar{d}}.
\]
Now we consider a finite Blaschke product $\theta$ satisfying
\[
\# (\clz(\theta)\setminus \{0\}) \geq 2,
\]
and record the following three results:

\begin{enumerate}
\item[(i)] Suppose $\theta(0)=0$ and $\theta'(0)\neq 0$. Then Theorem \ref{lem2.7} implies that $\varphi \in L(\clq_\theta)^*$  if and only if $\varphi$ is a M\"obius transformation and\[
mult_\theta(\lambda)=mult_\theta(\tilde{\varphi}(\lambda)),
\]
for all $\lambda \in \clz(\theta)\setminus \{0\}$. Moreover, we have
\[
D(\clq_\theta)^*=L(\clq_\theta)^* \cap \mathcal{S}(\mathbb{D}).
\]
\item[(ii)] Suppose $mult_\theta(0) \geq 2$. Then Theorem \ref{bm} implies that $\varphi \in L(\clq_\theta)^*$ if and only if $\varphi$ is an affine transformation\begin{equation*}
\varphi(z)=\alpha z+\beta,
\end{equation*}
for some scalars $\alpha(\neq 0)$ and $\beta$ such that
\[
mult_\theta(\lambda)=mult_\theta(\tilde{\varphi}(\lambda)) \mbox{~ for all~ } \lambda \in \clz(\theta)\setminus \{0\}.
\]
\item[(iii)] Suppose $\theta(0) = 0$, and let $\varphi\in D(\clq_\theta)^*$. Then Corollary \ref{lary1} implies the following:
\begin{enumerate}
\item $\varphi \in  \text{Aut}(\mathbb{D})$.
\item There exists $n \in \mathbb{N}$ such that  $\underbrace{\varphi \circ \dots \circ \varphi}_{n \text{ times }} = z$.
\end{enumerate}
\end{enumerate}

The most general result concerning finite Blaschke products $\theta$ that vanish at the origin is Corollary \ref{cor: general group}, which states that if
\[
\#(\clz(\theta)\setminus \{0\}) \geq 2,
\]
then $L(\clq_\theta) \setminus \mathbb{C}$ and $D(\clq_\theta) \setminus \mathbb{D}$ form a group under composition. In the more specific case, when
\[
\# (\clz(\theta)\setminus \{0\}) = 2,
\]
we have the following observation (see Theorem \ref{thm: 2 zero L uncount}): Let $\alpha$ and $\beta$ be two distinct nonzero elements of $\mathbb{D}$ and $m,n\in \mathbb{N}$. Let $b_{\alpha,\beta}$ denote the unique disc automorphism that interchanges $\alpha$
and $\beta$. Define
\[
\theta=zb_\alpha^m b_\beta^n.
\]
Then the following two properties hold:
\begin{enumerate}
\item $L(\clq_\theta)^*$ is an uncountable group.
\item  $D(\clq_\theta)^* =
\begin{cases}
\{z\} & \text{ if } m\neq n \\
\{z, b_{\alpha,\beta}\} & \text{otherwise.}
\end{cases}$
\end{enumerate}

This observation identifies an incorrect claim in \cite[Theorem 2.8]{Jav}, namely, that the set $L(\clq_\theta)$ is a finite cyclic group.

Needless to say, the above collection of results varies significantly from case to case, highlighting the rich and intricate structure of composition operators, even within the framework of finite-dimensional model spaces. Many additional results in this paper as well as in the broader literature further explore these themes along similar lines.

From this perspective, we remind the reader that the invariant subspace problem for composition operators is a classical and challenging problem. In fact, the invariant subspace problem for operators on Hilbert spaces is equivalent to the one-dimensional minimal invariant subspace problem for composition operators with hyperbolic symbols \cite{NRW}. This scenario also serves as an additional motivation besides the study of lattice structures of composition operators for the theory developed in this paper. We refer the reader to \cite{ISCO,Inv2014,Inv2005,Inv2015,Inv2010,Mut} and references therein for more results in this direction. For a list of exotic properties and their connections to diverse aspects of composition operators, we refer the reader to \cite{Burdon,  Fricain, Eva}.

The remaining part of the paper is structured as follows. Section \ref{sec: prep} presents some general observations that are used throughout the paper. Section \ref{sec: nonzero} provides a precise description of model spaces corresponding to finite Blaschke products that do not vanish at the origin. Section \ref{sec: cyclic} explores the natural emergence of finite cyclic groups, particularly in the context of finite subsets of the unit circle $\T$. In Section \ref{sec: nontrivial}, we present a complete solution to the question of the nontriviality of the groups that arise naturally in the study of self-analytic functions on $\D$ within the framework of model spaces. Sections \ref{sec: not vanish} and \ref{sec: mob} deal with the analysis of model spaces associated with finite Blaschke products that vanish at the origin. This setting brings M\"{o}bius transformations into consideration. Section \ref{sec: example} is devoted entirely to a detailed example, motivated by earlier work in \cite{Jav}. We revisit this example to correct certain inaccuracies and to present its full significance within a more general framework. The final section, Section \ref{sec: concluding}, presents general observations, outlines potential directions for future research, and includes a summary table highlighting some of the key results obtained in this paper.

\section{Basic observations}\label{sec: prep}

We treat this section as a warm-up for the results presented in the forthcoming sections. We also derive some elementary observations. Recall that
\[
\dim \mathcal{Q}_\theta = \deg \theta,
\]
for each finite Blaschke product $\theta$. We first consider the case of one-dimensional model spaces. These spaces correspond to $\theta = b_\alpha$, $\alpha \in \D$. However, here we focus only on the case $\alpha = 0$ (see Proposition \ref{single} for the case $\alpha \neq 0$):

\begin{remark}\label{remark11}
Let $\theta = \alpha z$ for some $\alpha \in \T$. Then
\[
\clq_\theta = H^2 \ominus \alpha z H^2 = \mathbb{C},
\]
where $\mathbb{C}$ represents the space of all constant functions. In this case, it is trivial to note that
\[
L(\clq_\theta)= \{\text{all the rational functions}\},
\]
and
\[
D(\clq_\theta)= \mathcal{S}(\mathbb{D}),
\]
and consequently, $L(\clq_\theta)$ and $D(\clq_\theta)$ are not comparable semigroups.
\end{remark}

We recall a general fact about the basis vectors of finite-dimensional model spaces, which will be used frequently in what follows: If $\theta$ is a finite Blaschke product with distinct zeros $\lambda_1, \ldots, \lambda_k$ of multiplicities $n_1,\ldots, n_k$, respectively, then the set
\begin{equation}\label{eqn: basis all}
\Big\{c_{\lambda_i}^{(\ell_i)}: 0\leq \ell_i\leq n_i-1, 1\leq i \leq k \Big\},
\end{equation}
form a basis for $\clq_\theta$. Here, the function $c_\lambda^{(s)}$ is defined by
\[
c_\lambda^{(s)}(z)=\frac{z^s}{(1-\bar{\lambda}z)^{s+1}}.
\]
If $\lambda\neq 0$, then $c_\lambda^{(s)}$ may also be taken as
\begin{equation}\label{eqn: basis special}
c_\lambda^{(s)}(z)=\frac{1}{(1-\bar{\lambda}z)^{s+1}}.
\end{equation}
In particular, we have
\[
\dim \clq_\theta = n_1 + \cdots + n_k.
\]

The following result is key, relying on the fact that $\clq_\theta$ consists of rational functions. While the containment is elementary, it will be useful in what follows.

\begin{proposition}\label{prop: D sub L}
Let $\theta$ be a finite Blaschke product that is not a rotation. Then
\[
D(\clq_\theta) \subseteq L(\clq_\theta).
\]
\end{proposition}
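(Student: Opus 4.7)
The plan is to reduce the statement to showing that any $\varphi \in D(\clq_\theta)$ is necessarily a rational function. Once $\varphi$ is known to be rational, it automatically extends to an analytic map $\mathbb{C}_\infty \to \mathbb{C}_\infty$ that is not identically $\infty$, and the invariance $C_\varphi \clq_\theta \subseteq \clq_\theta$ on $\D$ propagates to $\mathbb{C}_\infty$ by rigidity of rational functions (two rational functions agreeing on the open set $\D$ agree wherever both are defined), giving $\varphi \in L(\clq_\theta)$.

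To establish rationality of $\varphi$, I would exploit the explicit basis \eqref{eqn: basis all} of $\clq_\theta$, whose elements are all rational, and split into two cases according to the location of the zeros of $\theta$. First, if some $\lambda \in \clz(\theta)$ satisfies $\lambda \neq 0$, then by \eqref{eqn: basis special} the function $c_\lambda^{(0)}(z) = \frac{1}{1-\bar{\lambda}z}$ lies in $\clq_\theta$; the invariance hypothesis forces $C_\varphi c_\lambda^{(0)} = \frac{1}{1-\bar{\lambda}\varphi}$ to be in $\clq_\theta$, and hence rational, from which rationality of $\varphi$ is immediate. Second, if $\clz(\theta) \subseteq \{0\}$, then $\theta(z) = \alpha z^n$ for some $\alpha \in \T$ and some $n \geq 1$; the hypothesis that $\theta$ is not a rotation then forces $n \geq 2$, so the basis element $c_0^{(1)}(z) = z$ belongs to $\clq_\theta$, and invariance yields $\varphi = C_\varphi(z) \in \clq_\theta$, i.e.\ $\varphi$ is a polynomial of degree at most $n-1$.

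No serious obstacle is anticipated: the entire argument rests on selecting a single well-chosen basis element and reading off the rationality of $\varphi$ from that of $f \circ \varphi$. The only point that deserves attention is why the rotation hypothesis is genuinely needed; this is confirmed by Remark \ref{remark11}, where $\theta = \alpha z$ forces $\clq_\theta = \mathbb{C}$ and the containment fails because $D(\clq_\theta) = \cls(\D)$ contains many transcendental self-maps that are not rational, hence do not lie in $L(\clq_\theta)$. Thus the case distinction above is tight, and together the two cases yield the desired inclusion.
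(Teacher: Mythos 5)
Your argument is correct and follows essentially the same route as the paper's proof: reduce to rationality of $\varphi$, and extract it from the invariance of a single basis element, namely $z$ when all zeros of $\theta$ are at the origin and $\frac{1}{1-\bar{\lambda}z}$ when some zero $\lambda$ is nonzero. The paper organizes the dichotomy as $z\in\clq_\theta$ versus $z\notin\clq_\theta$ rather than by the location of $\clz(\theta)$, but the two case splits are interchangeable here and use the same witnesses.
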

\begin{proof}
Suppose $\varphi \in D(\clq_\theta)$. To show that $\vp \in L(\clq_\theta)$, it suffices to show that $\vp$ is a rational map. First, assume that $z \in \clq_\theta$. Then
\[
z \circ \varphi= \varphi \in \clq_\theta,
\]
which implies that $\varphi$ is a rational function. Next, assume that $z\notin \clq_\theta$. Since $\theta$ is not a rotation, it follows that
\[
\clz(\theta) \setminus \{0\} \neq \emptyset.
\]
Pick $\lambda\in \clz(\theta)\setminus\{0\}$. This implies $\theta = b_\lambda \tilde{\theta}$ for some finite Blaschke product $\tilde{\theta}$. Then $\dfrac{1}{1-\bar{\lambda} z} \in \clq_\theta$, and hence
\[
\dfrac{1}{1-\bar{\lambda} z} \circ \varphi=  \dfrac{1}{1-\bar{\lambda} \varphi} \in \clq_\theta,
\]
is a rational function. It follows, in this case as well, that $\varphi$ is a rational map. Therefore, in both cases we have $\varphi \in L(\clq_\theta)$. This completes the proof.
\end{proof}

The following simple lemma will be used throughout this paper repeatedly.

\begin{lemma}\label{lem1}
Let $\varphi : \D \raro \mathbb{C}$ be an analytic function. Suppose
\begin{equation*}
\frac{1}{1-p\varphi(z)}=\frac{a}{1-qz} \qquad (z \in \D),
\end{equation*}
for some  complex numbers $0<|p|\leq |q|<1$ and $a\neq 0$. Then the following conditions are equivalent:
\begin{enumerate}
\item $\varphi \in \cls(\D)$.
\item $a=1$ and $|p|=|q|$.
\item $\varphi$ is a rotation.

\end{enumerate}
In either of these cases, we have $\varphi(z)=\dfrac{q}{p} z$.
\end{lemma}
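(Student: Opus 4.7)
The plan is to first convert the given functional equation into an explicit formula for $\vp$, and then check the three implications using that formula. Clearing denominators in the relation $\frac{1}{1-p\vp(z)}=\frac{a}{1-qz}$ and solving for $\vp(z)$, I find that $\vp$ must be the affine map
\[
\vp(z) = \frac{q}{ap}\, z + \frac{a-1}{ap}.
\]
(Equivalently, evaluating at $z=0$ gives $\vp(0) = (a-1)/(ap)$.) With this explicit form in hand, the equivalences become a matter of tracking when the constant term vanishes and when the leading coefficient has modulus one.

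The implications $(2)\Rightarrow(3)\Rightarrow(1)$ are then essentially immediate: if $a=1$, the constant term disappears and $\vp(z) = (q/p)z$; if additionally $|p|=|q|$, then $|q/p|=1$, so $\vp$ is a rotation and in particular maps $\D$ into $\D$. This also verifies the concluding assertion that $\vp(z) = (q/p)z$ whenever any of the conditions hold.

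The main content is the implication $(1)\Rightarrow(2)$. Here I would use the fact that for an affine map $\vp(z) = Az+B$, the image $\vp(\D)$ is an open disc of radius $|A|$ centered at $B$, so $\vp \in \cls(\D)$ forces $|A|+|B|\le 1$. Substituting $A = q/(ap)$ and $B = (a-1)/(ap)$ yields
\[
|q| + |a-1| \leq |a|\,|p| \leq |a|\,|q|,
\]
where the second inequality uses the hypothesis $|p|\le |q|$. Rearranging gives $|a-1|\le (|a|-1)|q|$, which forces $|a|\ge 1$; combined with the reverse triangle inequality $|a|-1\le |a-1|$ and $|q|<1$, this chain can only hold if $|a|=1$ and $a=1$. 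Once $a=1$ is established, the reduced equation $\vp(z)=(q/p)z$ together with $\vp\in\cls(\D)$ forces $|q/p|\le 1$, i.e.\ $|q|\le|p|$, which combined with $|p|\le|q|$ gives $|p|=|q|$, completing the proof.

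The main obstacle is the strictness in the inequality chain for $(1)\Rightarrow(2)$: one needs to exploit both $|q|<1$ and the reverse triangle inequality simultaneously to force $a=1$ rather than merely $|a|=1$. Everything else is bookkeeping from the explicit affine formula for $\vp$.
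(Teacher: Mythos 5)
Your proposal is correct and follows essentially the same route as the paper: both derive the explicit affine form $\vp(z)=\frac{q}{ap}z+\frac{a-1}{ap}$ by cross-multiplication and then use the fact that an affine map $Az+B$ is a self-map of $\D$ only if $|A|+|B|\le 1$ to force $a=1$, finishing with $|q/p|\le 1$ to get $|p|=|q|$. The only cosmetic difference is that the paper argues contrapositively (assuming $a\neq 1$ and exhibiting $|A|+|B|>1$ via $|p|<1$), whereas you rearrange the inequality into $|a-1|\le(|a|-1)|q|$ and invoke the reverse triangle inequality with $|q|<1$; the underlying estimate is the same.
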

\begin{proof}
By cross-multiplying the equation given in the hypothesis, we immediately obtain
\[
\varphi(z)=\frac{q}{ap}z+\frac{a-1}{ap}.
\]
Suppose (1) holds, that is, $\varphi \in \cls(\D)$. Assume, if possible, that $a\not =1$. Since $0<|p|<1 $, it follows that
\begin{equation*}
\left|\frac{a-1}{ap}\right|> \left|\frac{a-1}{a}\right|.
\end{equation*}
By the triangle inequality, we have
\begin{equation*}
\left|\frac{a-1}{ap}\right|+ \left|\frac{1}{a}\frac{q}{p}\right|> \left|\frac{a-1}{a}\right|+
\left|\frac{1}{a}\right| \geq 1.
\end{equation*}
Therefore $\varphi$ is not a self-map of $\mathbb{D}$ (recall that $\alpha z + \beta$, $\alpha\neq 0$, is a self-map of $\D$ if and only if $|\alpha|+|\beta| \leq 1$). It yields that $a=1$, and thus
\[
\varphi(z)=\frac{q}{p} z.
\]
Also note that $\varphi$ cannot be a self-map of $\mathbb{D}$ if $|p|<|q|$. Hence
$\varphi(z)=\frac{q}{p} z$ with $|p|=|q|$. Thus, we have shown that (1) implies (2). The implications that (2) implies (3) and (3) implies (1) are immediate.
\end{proof}

\section{Blaschke products nonvanishing at $0$}\label{sec: nonzero}

This section discusses the structures of $D(\clq_\theta)$ and $L(\clq_\theta)$ under the condition that $\theta$ is a finite Blaschke product and that $\theta(0)$ is nonzero. Part of this section also recalls a collection of results, primarily from \cite{Jav} (and also from \cite{Mut}), to provide a complete structures of $D(\clq_\theta)$ and $L(\clq_\theta)$ associated with $\theta$. However, we note that, along the way, we will also refine some of those results to provide a clearer understanding of these sets.

We first consider the case where $\theta$ is a finite Blaschke product with a singleton zero set. A major part of the following result was established in \cite{Jav} and \cite{Mut}. The second part follows easily from Lemma \ref{lem1}; see also the concluding paragraph of Subsection 2.1 in \cite{Jav}. We will use this result in the later sections of the paper.

\begin{proposition}\label{single}
Let $\theta = b_{\lambda}^n$ for some $\lambda \in \mathbb{D} \setminus \{0\}$ and $n \in \mathbb{N}$. Then
\begin{equation*}
L(\clq_\theta)= \left\{(1-\bar{\lambda}a)z+a: a \in \mathbb{C}, a\neq \frac{1}{\bar{\lambda}}\right\},
\end{equation*}
and
\begin{equation*}
D(\clq_\theta)= \left\{z\right\}.
\end{equation*}
\end{proposition}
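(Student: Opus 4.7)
The strategy rests on the explicit description of $\clq_\theta$ afforded by (\ref{eqn: basis special}). With $\theta = b_\lambda^n$ and $\lambda \ne 0$, a basis of $\clq_\theta$ is $\{c_\lambda^{(s)}\}_{s=0}^{n-1}$ with $c_\lambda^{(s)}(z) = (1-\bar\lambda z)^{-(s+1)}$, so
\[
\clq_\theta = \bigl\{ P(z)/(1-\bar\lambda z)^n : P \in \mathbb{C}[z], \ \deg P \le n-1 \bigr\}.
\]
The plan is to compose a candidate $\varphi$ with these basis functions and read off the constraints. For the forward inclusion in $L(\clq_\theta)$, I would take $\varphi(z) = (1-\bar\lambda a)z + a$ with $a \ne 1/\bar\lambda$ and use the identity $1-\bar\lambda\varphi(z) = (1-\bar\lambda a)(1-\bar\lambda z)$, which gives $c_\lambda^{(s)} \circ \varphi = (1-\bar\lambda a)^{-(s+1)} c_\lambda^{(s)} \in \clq_\theta$ at once, so $C_\varphi \clq_\theta \subseteq \clq_\theta$.

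For the converse, let $\varphi \in L(\clq_\theta)$; since $\theta$ is not a rotation, $\varphi$ is a rational map by Proposition \ref{prop: D sub L}. The condition $c_\lambda^{(0)} \circ \varphi \in \clq_\theta$ yields a polynomial $P$ with $\deg P \le n-1$ satisfying
\[
1 - \bar\lambda \varphi(z) = \frac{(1-\bar\lambda z)^n}{P(z)}.
\]
Applying $C_\varphi$ to the highest basis vector $c_\lambda^{(n-1)}$ forces $(1-\bar\lambda z)^{n(n-1)}$ to divide $P(z)^n$, which combined with $\deg P \le n-1$ pins $P$ down as $P(z) = \alpha (1-\bar\lambda z)^{n-1}$ for some $\alpha \in \mathbb{C} \setminus \{0\}$. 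Substituting back, $\varphi$ is affine and can be written as $(1-\bar\lambda a)z + a$ with $a = (\alpha - 1)/(\alpha\bar\lambda)$; finiteness of $\alpha$ corresponds exactly to $a \ne 1/\bar\lambda$.

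For $D(\clq_\theta)$, Proposition \ref{prop: D sub L} gives $D(\clq_\theta) \subseteq L(\clq_\theta)$, so every $\varphi \in D(\clq_\theta)$ already has the affine form above and therefore satisfies
\[
\frac{1}{1 - \bar\lambda \varphi(z)} = \frac{1/(1-\bar\lambda a)}{1 - \bar\lambda z}.
\]
Lemma \ref{lem1} applied with $p = q = \bar\lambda$ forces the constant $1/(1-\bar\lambda a)$ to equal $1$, hence $a = 0$ and $\varphi(z) = z$. The step I expect to require most care is the divisibility argument pinning $P$ down as a scalar multiple of $(1-\bar\lambda z)^{n-1}$: the rigidity of $\clq_\theta$ (poles concentrated at a single point of prescribed order) is what collapses the a priori large space of rational $\varphi$ onto the family of affine maps fixing $1/\bar\lambda$, after which everything is bookkeeping and a single invocation of Lemma \ref{lem1}.
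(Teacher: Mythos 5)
Your argument is correct, but it is organized differently from the paper's. The paper does not prove the description of $L(\clq_\theta)$ at all: it cites \cite[Theorem 2.1]{Jav} (equivalently \cite[Theorem 3.1]{Mut}) for that identity, and then handles $D(\clq_\theta)$ exactly as you do, by extracting the relation $1-\bar{\lambda}\varphi=c(1-\bar{\lambda}z)$ from the cited result and invoking Lemma \ref{lem1}. What you add is a self-contained proof of the $L(\clq_\theta)$ part: the forward inclusion via $1-\bar{\lambda}\varphi(z)=(1-\bar{\lambda}a)(1-\bar{\lambda}z)$ is the standard computation, and your converse --- writing $1/(1-\bar{\lambda}\varphi)=P(z)/(1-\bar{\lambda}z)^n$ with $\deg P\le n-1$, then using $C_\varphi c_\lambda^{(n-1)}\in\clq_\theta$ to force $(1-\bar{\lambda}z)^{n(n-1)}\mid P^n$ and hence $P=\alpha(1-\bar{\lambda}z)^{n-1}$ --- is a clean divisibility argument that checks out (including the degenerate case $n=1$, where the divisibility condition is vacuous and $P$ is a nonzero constant anyway, and the constant-$\varphi$ case, which your degree count excludes automatically). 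The only blemish is a mis-citation: for $\varphi\in L(\clq_\theta)$ rationality is built into the definition of $L(\clq_\theta)$ (its elements are analytic self-maps of $\mathbb{C}_\infty$), so you do not need Proposition \ref{prop: D sub L}, which concerns the inclusion $D(\clq_\theta)\subseteq L(\clq_\theta)$; that proposition is, however, exactly what you need (and correctly use) in the $D(\clq_\theta)$ step. Net effect: your route makes the proposition independent of the external references at the cost of the short divisibility computation, while the paper's route is shorter but outsources the main content.
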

\begin{proof}
For the first part, see \cite[Theorem 2.1]{Jav} (or see \cite[Theorem 3.1]{Mut}). For the representation of $D(\clq_\theta)$, we observe that there exists a constant $c$ (see the final part of the proof of \cite[Theorem 3.1]{Mut}) such that
\[
1-\bar{\lambda}\varphi=c(1-\bar{\lambda}z).
\]
By Lemma \ref{lem1}, we get $\varphi(z)=z$, which proves that $D(\clq_\theta)= \left\{z\right\}$.
\end{proof}

In particular, we obtain the following contrasting result:

\begin{corollary}
Let $\theta = b_{\lambda}^n$ for some $\lambda \in \mathbb{D} \setminus \{0\}$ and $n \in \mathbb{N}$. Then $D(\clq_\theta)$ is a trivial group, whereas
$L(\clq_\theta)$ is a noncyclic infinite group.
\end{corollary}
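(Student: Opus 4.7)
The first statement is immediate from the preceding proposition: since $D(\clq_\theta) = \{z\}$ contains only the identity, it is the trivial group under composition, with nothing further to verify.

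For $L(\clq_\theta)$, the plan is to work with the explicit parameterization $\varphi_a(z) := (1-\bar\lambda a)z + a$, $a \in \mathbb{C}\setminus\{1/\bar\lambda\}$, and proceed in three steps: group structure, infiniteness, non-cyclicity. Writing a general affine map as $cz + d$, membership in $L(\clq_\theta)$ is equivalent to the linear constraint $c + \bar\lambda d = 1$ together with $c \neq 0$, and I would observe that this constraint is preserved under composition of affine maps. A short algebraic check yields $\varphi_a \circ \varphi_b = \varphi_{c}$ with $c = (1-\bar\lambda a)b + a$, and the factorization $1-\bar\lambda c = (1-\bar\lambda a)(1-\bar\lambda b)$ automatically guarantees $c \neq 1/\bar\lambda$ whenever $a,b \neq 1/\bar\lambda$. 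The identity element is $\varphi_0(z) = z$, and the inverse of $\varphi_a$ is $\varphi_{-a/(1-\bar\lambda a)}$, which again lies in $L(\clq_\theta)$ by the same identity. Thus $L(\clq_\theta)$ is a subgroup of $\mathrm{Aff}(\mathbb{C})$.

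For the cardinality claims, the map $a \mapsto \varphi_a$ is injective (one recovers $a$ by evaluating at $z = 0$), so $L(\clq_\theta)$ is in bijection with $\mathbb{C}\setminus\{1/\bar\lambda\}$ and is therefore uncountable. In particular it is infinite, and since every cyclic group is countable, $L(\clq_\theta)$ cannot be cyclic. There is no genuine obstacle here: the preceding proposition has already identified the set, and what remains is a brief verification that the linear constraint $c + \bar\lambda d = 1$ cuts out a subgroup of $\mathrm{Aff}(\mathbb{C})$, followed by a cardinality count to rule out cyclicity.
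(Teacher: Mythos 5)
Your argument is correct and follows the same route as the paper, which states this corollary as an immediate consequence of Proposition \ref{single} and offers no further proof. Your explicit verification of the group law (via the identity $1-\bar{\lambda}c=(1-\bar{\lambda}a)(1-\bar{\lambda}b)$ for $\varphi_a\circ\varphi_b=\varphi_c$) and the countability argument ruling out cyclicity simply supply the details the paper leaves to the reader.
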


This completes the discussion of finite Blaschke products with a singleton zero set. We now turn to the case of finite Blaschke products $\theta$ where $\theta(0)\neq 0$ and
\[
\#\clz(\theta) \geq 2.
\]
The following lemma is an improvement of \cite[Lemma 2.2]{Jav}. We do, however, utilize that lemma to produce the subsequent sharper version, wherein all relevant conditions are essentially consolidated into a single, unified condition.

\begin{lemma}\label{lem: Javad}
Let $\theta$ be a finite Blaschke product. Assume that $\theta(0) \neq 0$ and $\#\clz(\theta) \geq 2$. Then $\varphi \in\ L(\clq_\theta)$ if and only if
there exist constants $a (\neq 0)$ and $b$ such that
\[
\varphi(z)=az+b,
\] with
\[
\text{mult}_\theta (\lambda) = \text{mult}_{\theta}\left(\frac{\overline{a} \lambda}{1-
\overline{b} \lambda}\right),
\]
for all $\lambda \in \clz(\theta)$.
\end{lemma}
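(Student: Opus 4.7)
The plan is to leverage the explicit basis $\bigl\{(1-\bar{\lambda_i}z)^{-(s+1)}: 1 \le i \le k,\, 0 \le s \le n_i - 1\bigr\}$ of $\clq_\theta$ from \eqref{eqn: basis special}, where $\lambda_1, \dots, \lambda_k$ denote the distinct zeros of $\theta$ with respective multiplicities $n_1, \dots, n_k$. Invariance of these basis elements under $C_\vp$ will translate into a polynomial divisibility statement that drives both directions of the equivalence.

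For the forward direction, the first goal is to show that $\vp$ is affine. Writing $\vp = P/Q$ with $P, Q$ coprime polynomials, the starting observation is that every element of $\clq_\theta$ vanishes at $z = \infty$; applying this to $C_\vp\bigl(1/(1-\bar{\lambda_1}z)\bigr) = 1/(1-\bar{\lambda_1}\vp) \in \clq_\theta$ forces $\vp(\infty) = \infty$, so $\deg P > \deg Q$. Next, applying $C_\vp$ to the highest-order basis element $(1-\bar{\lambda_i}z)^{-n_i}$ and clearing denominators, the coprimality of $Q$ and $Q - \bar{\lambda_i}P$ (a consequence of $\gcd(P,Q) = 1$ together with $\lambda_i \neq 0$) forces $(Q - \bar{\lambda_i}P)^{n_i}$ to divide $\prod_j (1-\bar{\lambda_j}z)^{n_j}$. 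Since $Q - \bar{\lambda_i}P$ and $Q - \bar{\lambda_{i'}}P$ are themselves coprime for $i \neq i'$ (a common root would force $P$ and $Q$ to share a root), these divisibilities combine into
\[
\prod_{i=1}^k (Q - \bar{\lambda_i}P)^{n_i} \;\; \text{divides} \;\; \prod_{j=1}^k (1-\bar{\lambda_j}z)^{n_j}.
\]
A degree comparison then yields $Nd \le N$ where $N := \sum_j n_j$ and $d := \deg \vp = \deg P$, so $d = 1$ and $\vp(z) = az + b$ with $a \neq 0$.

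With $\vp$ affine, the displayed divisibility becomes an equality up to a nonzero scalar. Factoring $1 - \bar{\lambda_i}(az + b) = (1 - \bar{\lambda_i}b)(1 - \bar{\nu_i}z)$ with $\nu_i := \bar{a}\lambda_i/(1-\bar{b}\lambda_i)$, unique factorization in $\mathbb{C}[z]$ produces a permutation $\sigma \in S_k$ such that $\nu_i = \lambda_{\sigma(i)}$ and $n_i = n_{\sigma(i)}$ for every $i$; rephrased, this is precisely $\text{mult}_\theta(\lambda) = \text{mult}_\theta\bigl(\bar{a}\lambda/(1-\bar{b}\lambda)\bigr)$ for all $\lambda \in \clz(\theta)$. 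The reverse direction is a direct verification: given $\vp = az+b$ satisfying the multiplicity condition, each $\nu_i$ lies in $\clz(\theta)$ with multiplicity $n_i$, so
\[
C_\vp\bigl((1-\bar{\lambda_i}z)^{-(s+1)}\bigr) = (1-\bar{\lambda_i}b)^{-(s+1)}\,(1-\bar{\nu_i}z)^{-(s+1)}
\]
is a scalar multiple of a basis element of $\clq_\theta$ (as $s+1 \le n_i$), and linearity together with the analyticity of $\vp$ on $\mathbb{C}_\infty$ yields $\vp \in L(\clq_\theta)$.

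The main obstacle is the degree argument producing $d = 1$. It depends on the interplay of three ingredients: $\vp(\infty) = \infty$, so each factor $Q - \bar{\lambda_i}P$ attains the full degree $d$; the pairwise coprimality of these factors across distinct $i$, so their powers multiply in the divisibility without overlap; and the degree bound $N$ on $\prod_j (1-\bar{\lambda_j}z)^{n_j}$. The hypothesis $\theta(0) \neq 0$ ensures $\lambda_i \neq 0$ so that \eqref{eqn: basis special} applies and $Q$ is coprime to each $Q - \bar{\lambda_i}P$, while $\#\clz(\theta) \ge 2$ guarantees enough distinct factors for the multiplicity condition to impose a non-vacuous constraint.
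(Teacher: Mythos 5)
Your proof is correct, and the converse direction coincides with the paper's (direct verification on the basis elements $(1-\bar{\lambda}z)^{-s}$). The forward direction, however, is where you genuinely diverge: the paper does not prove it internally but instead observes that $\varphi$ is rational and then invokes Lemma 2.2 of Mashreghi--Shabankhah \cite{Jav} wholesale, whereas you give a self-contained argument. Your route --- writing $\varphi = P/Q$ in lowest terms, using the fact that every element of $\clq_\theta$ vanishes at $\infty$ (valid precisely because $\theta(0)\neq 0$) to get $\deg P > \deg Q$, extracting the divisibility $\prod_i (Q-\bar{\lambda_i}P)^{n_i} \mid \prod_j(1-\bar{\lambda_j}z)^{n_j}$ from pairwise coprimality, and concluding $\deg\varphi = 1$ by comparing degrees $Nd \le N$ --- is clean and correct; I checked in particular that $\gcd(Q, Q-\bar{\lambda_i}P)=1$ needs $\lambda_i \neq 0$, that $\deg(Q-\bar{\lambda_i}P)=\deg P$ needs $\deg P > \deg Q$ and $\lambda_i\neq 0$, and that the common-root argument for coprimality across distinct $i$ works. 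The payoff of your approach is that the lemma becomes independent of \cite{Jav} and the role of each hypothesis is transparent (indeed your argument never really uses $\#\clz(\theta)\ge 2$, consistent with Proposition \ref{single}); the cost is length. One small point worth making explicit: when you factor $1-\bar{\lambda_i}(az+b) = (1-\bar{\lambda_i}b)(1-\bar{\nu_i}z)$ you are implicitly using $1-\bar{\lambda_i}b\neq 0$, which does follow from your own setup --- the divisibility at degree equality forces $\prod_i(Q-\bar{\lambda_i}P)^{n_i}$ to be a nonzero scalar multiple of $\prod_j(1-\bar{\lambda_j}z)^{n_j}$, and the right-hand side does not vanish at $z=0$ --- but the reader should not have to reconstruct this.
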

\begin{proof}
Suppose $\varphi \in\ L(\clq_\theta)$. By the definition of $L(\mathcal{Q}_\theta)$, the function $\varphi$ is rational. Now if $\varphi=z$, then the desired result follows. On the other hand, if $\varphi\neq z$, then \cite[Lemma 2.2]{Jav} implies the result. For the converse direction, suppose there exist constants $a (\neq 0)$ and $b$ such that
\[
\varphi(z)=az+b,
\] with
\[
\text{mult}_\theta (\lambda) = \text{mult}_{\theta}\left(\frac{\overline{a} \lambda}{1-
\overline{b} \lambda}\right),
\]
for all $\lambda \in \clz(\theta)$. To show $\varphi \in\ L(\clq_\theta)$ (that is, $ C_\varphi \clq_\theta \subseteq \clq_\theta$), we start with an arbitrary basis element $\dfrac{1}{(1-\bar{\lambda}z)^{s}}$ of $\clq_\theta$, where $\lambda\in \clz(\theta)$ and $s\leq {mult}_\theta (\lambda)$. Then
\[
\frac{1}{(1-\bar{\lambda}z)^{s}}\circ \varphi = \frac{1}{(1-\bar{\lambda}(az+b))^{s}} = \frac{1}{(1-
b \bar{ \lambda})^s}\frac{1}{\left(1-\overline{\left(\frac{\overline{a} \lambda}{1-
\overline{b}\lambda}\right)}\,z\right)^s}.
\]
Combined with the fact that
\[
s\leq \text{mult}_\theta (\lambda) = \text{mult}_{\theta}\left(\frac{\overline{a} \lambda}{1-
\overline{b} \lambda}\right),
\]
we obtain
\[
\frac{1}{(1-\bar{\lambda}z)^{s}}\circ \varphi \in \clq_\theta,
\]
which completes the proof.
\end{proof}

The above multiplicity condition also ensures that $\dfrac{\overline{a} \lambda}{1- \overline{b} \lambda} \in \clz(\theta)$. Next, we recall from \cite[Theorem 2.3]{Jav} a general result concerning model spaces associated with Blaschke products having two distinct zeros. Given a self-map $f$ and a natural number $n$, we write
\[
f^{[n]}= \underbrace{f \circ \dots \circ f}_{n \text{ times }}.
\]

\begin{proposition}[]\label{affine}
Let $\theta$ be a finite Blaschke product. Assume that $\theta(0)\neq 0$ and $\# \clz(\theta) \geq 2$. Then there exist scalars $a \neq 0$ and $b$ and natural number $n \in \mathbb{N}$ such that
\[
L(\clq_\theta)= \left\{ z, \varphi, \varphi^{[2]}, \dots, \varphi^{[n-1]} \right\}
\]
where
\[
\varphi(z)=az+b,
\]
and
\[
\varphi^{[n]}(z) =z.
\]
In particular, $L(\clq_\theta)$ is a finite cyclic subgroup of $\mathrm{Aut}(\mathbb{C})$.
\end{proposition}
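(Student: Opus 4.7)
The plan is to combine Lemma~\ref{lem: Javad} with standard finite-group arguments in three stages. First, Lemma~\ref{lem: Javad} already forces every $\varphi\in L(\clq_\theta)$ to be an affine map $\varphi(z)=az+b$ with $a\neq 0$, so $L(\clq_\theta)\subseteq\mathrm{Aut}(\mathbb{C})$. The same lemma shows that the induced map
\[
\sigma_\varphi:\clz(\theta)\to\clz(\theta),\qquad \sigma_\varphi(\lambda)=\frac{\bar a\,\lambda}{1-\bar b\,\lambda},
\]
is a multiplicity-preserving bijection of the finite set $\clz(\theta)$ (it is the restriction to $\clz(\theta)$ of an injective M\"obius transformation of $\mathbb{C}_\infty$). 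Moreover, the assignment $\varphi\mapsto\sigma_\varphi$ is itself injective: using $\theta(0)\neq 0$ together with $\#\clz(\theta)\ge 2$, I pick two distinct nonzero zeros $\lambda_1,\lambda_2$; equating $\sigma_{\varphi_1}(\lambda_j)=\sigma_{\varphi_2}(\lambda_j)$ for $j=1,2$ and cross-multiplying quickly forces $(a_1,b_1)=(a_2,b_2)$. Since $\sigma_\varphi$ lies in the symmetric group on the finite set $\clz(\theta)$, this yields that $L(\clq_\theta)$ is \emph{finite}.

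Next I would upgrade $L(\clq_\theta)$ from a semigroup to a group, via the standard observation that any finite sub-semigroup of a group is itself a group. Indeed, for $\varphi\in L(\clq_\theta)$ the sequence $\varphi^{[1]},\varphi^{[2]},\ldots$ must repeat in the finite set $L(\clq_\theta)$; if $\varphi^{[m]}=\varphi^{[n]}$ with $m<n$, then cancellation in $\mathrm{Aut}(\mathbb{C})$ gives $\varphi^{[n-m]}=z$, so $\varphi^{-1}=\varphi^{[n-m-1]}\in L(\clq_\theta)$. Hence $L(\clq_\theta)$ is a finite subgroup of $\mathrm{Aut}(\mathbb{C})$.

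Finally, to extract cyclicity, consider the group homomorphism
\[
\Phi:L(\clq_\theta)\longrightarrow \mathbb{C}^*,\qquad az+b\longmapsto a.
\]
Its kernel consists of those translations $z+b$ lying in $L(\clq_\theta)$, but no nontrivial translation has finite order in $\mathrm{Aut}(\mathbb{C})$, so $\ker\Phi=\{z\}$. Thus $L(\clq_\theta)$ embeds into the multiplicative group $\mathbb{C}^*$ as a finite subgroup, and every finite subgroup of $\mathbb{C}^*$ is cyclic. Taking $\varphi=az+b$ to be a preimage of a generator (where $a$ is a primitive $n$-th root of unity, with $n=|L(\clq_\theta)|$) produces the claimed enumeration $L(\clq_\theta)=\{z,\varphi,\varphi^{[2]},\dots,\varphi^{[n-1]}\}$ with $\varphi^{[n]}=z$. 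The step with the most genuine content is the injectivity of $\varphi\mapsto\sigma_\varphi$, where the hypothesis $\#\clz(\theta)\ge 2$ (together with $\theta(0)\neq 0$) is used decisively; the remaining steps are general principles about finite sub-semigroups of the affine group.
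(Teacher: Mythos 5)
Your proof is correct. It is worth noting that the paper itself does not prove Proposition \ref{affine}; it recalls the statement from \cite[Theorem 2.3]{Jav}, whose proof (as the authors remark in the surrounding discussion) goes through the iterative behaviour of affine/M\"obius maps. Your argument is a self-contained and more elementary derivation from Lemma \ref{lem: Javad}, and every step checks out. The one place deserving a slightly fuller justification is the injectivity of $\varphi\mapsto\sigma_\varphi$: your cross-multiplication works (cancel the nonzero factor $\lambda$, cross-multiply, and observe that a degree-one polynomial with two distinct roots vanishes identically, giving $a_1=a_2$ and then $b_1=b_2$ since $a_1\neq 0$), but the cleanest way to see it is that each $w\mapsto \bar a w/(1-\bar b w)$ is a M\"obius map fixing $0$, so two of them agreeing at two distinct zeros of $\theta$ (necessarily nonzero, since $\theta(0)\neq 0$) agree at three points of $\mathbb{C}_\infty$ and hence coincide; this is exactly where both hypotheses $\theta(0)\neq 0$ and $\#\clz(\theta)\geq 2$ enter, as you say. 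The remaining ingredients --- $L(\clq_\theta)$ is a sub-semigroup of $\mathrm{Aut}(\mathbb{C})$ containing the identity, a finite sub-semigroup of a group is a group, the kernel of $az+b\mapsto a$ consists of translations which have infinite order, and finite subgroups of $\mathbb{C}^*$ are cyclic --- are standard and correctly applied. As a bonus, your embedding of $L(\clq_\theta)$ into the permutation group of $\clz(\theta)$ gives the explicit bound $n\leq(\#\clz(\theta))!$ on the order of the cyclic group, which the bare citation does not provide.
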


In these circumstances, we also have a description of $D(\clq_\theta)$ from \cite[Corollary 2.4 ]{Jav}: Let $\theta$ be a finite Blaschke product. Suppose $\theta(0)\neq 0$ and $\# \clz(\theta) \geq 2$. Then $\varphi \in D(\clq_\theta)$ if and only if following conditions hold:
\begin{itemize}
\item[(i)] $\varphi(z)= \alpha z$ for some $\alpha \in \T$.
\item[(ii)] $\alpha^n=1$ for some $n\in \mathbb{N}$.
\item[(iii)] $\alpha\lambda\in \clz(\theta)$ for all $\lambda \in \clz(\theta)$.
\item[(iv)] The zeros $\{\lambda, \alpha \lambda,\ldots, \alpha^{n-1}\lambda\}$ of $\theta$ have same multiplicity.
\end{itemize}

The proof of this result is involved, as it relies on non-trivial results concerning the iterative behavior of loxodromic and parabolic M\"{o}bius transformations. In the following, we present a slightly modified version of \cite[Corollary 2.4]{Jav} analogous to a conjugation of the scalar part in rotation maps along with a different proof. This version leads to several useful consequences. Moreover, it will both imply and be implied by \cite[Corollary 2.4]{Jav}, as we will point out after the proof.

\begin{theorem}\label{thm1}
Let $\theta$ be a finite Blaschke product. Assume that $\theta(0)\neq 0$. Then $\varphi \in D(\clq_\theta)$ if and only if $\varphi(z)=\alpha z$ for some $\alpha \in \T$ with
\[
mult_\theta (\lambda) = mult_{\theta}(\bar{\alpha} \lambda),
\]
for all $\lambda \in \clz(\theta)$.
\end{theorem}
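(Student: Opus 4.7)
The plan is to split the forward direction according to the size of $\clz(\theta)$, and to dispatch the backward direction via a direct basis computation. For sufficiency, given $\varphi(z)=\alpha z$ with $\alpha\in\T$ satisfying the multiplicity hypothesis, I would pick the basis of $\clq_\theta$ from \eqref{eqn: basis special}, namely
\[
c_\lambda^{(s)}(z)=\frac{1}{(1-\bar{\lambda}z)^{s+1}} \qquad \bigl(\lambda\in\clz(\theta),\ 0\le s< mult_\theta(\lambda)\bigr),
\]
and observe that
\[
(c_\lambda^{(s)}\circ\varphi)(z)=\frac{1}{(1-\bar{\lambda}\alpha z)^{s+1}}=c_{\bar{\alpha}\lambda}^{(s)}(z).
\]
The hypothesis $mult_\theta(\lambda)=mult_\theta(\bar{\alpha}\lambda)$ in particular forces $\bar{\alpha}\lambda\in\clz(\theta)$ with $s<mult_\theta(\bar{\alpha}\lambda)$, so $c_{\bar{\alpha}\lambda}^{(s)}$ is again a basis vector of $\clq_\theta$. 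Hence $C_\varphi$ sends each basis vector of $\clq_\theta$ into $\clq_\theta$, and $\varphi\in D(\clq_\theta)$.

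For necessity, suppose $\varphi\in D(\clq_\theta)$. If $\#\clz(\theta)=1$, then $\theta=b_\mu^n$ for some $\mu\in\D\setminus\{0\}$, and Proposition \ref{single} forces $\varphi(z)=z$; the required form holds with $\alpha=1$ and the multiplicity condition is vacuous. Assume next $\#\clz(\theta)\ge 2$. Since $\theta(0)\ne 0$, $\theta$ is not a rotation, so Proposition \ref{prop: D sub L} gives $\varphi\in L(\clq_\theta)$. Lemma \ref{lem: Javad} then yields $\varphi(z)=az+b$ with $a\ne 0$ and with the corresponding multiplicity identity at the transformed zeros $\bar{a}\lambda/(1-\bar{b}\lambda)$. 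By Proposition \ref{affine}, $L(\clq_\theta)$ is a finite cyclic subgroup of $\mathrm{Aut}(\mathbb{C})$, so $\varphi^{[n]}=z$ for some $n\in\mathbb{N}$. Direct iteration gives $\varphi^{[n]}(z)=a^n z+b(1+a+\cdots+a^{n-1})$, which forces $a^n=1$, hence $|a|=1$. But $\varphi\in\cls(\D)$ implies $|a|+|b|\le 1$, so $b=0$. Setting $\alpha:=a\in\T$, the multiplicity identity of Lemma \ref{lem: Javad} simplifies to exactly $mult_\theta(\lambda)=mult_\theta(\bar{\alpha}\lambda)$ for all $\lambda\in\clz(\theta)$.

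The only real friction lies in eliminating the translation component $b$ from the affine expression for $\varphi$; the combination of the torsion information from Proposition \ref{affine} with the self-map bound $|a|+|b|\le 1$ dispatches it cleanly. A self-contained alternative, should one wish to bypass Proposition \ref{affine}, would note that an affine self-map $az+b$ of $\D$ with $b\ne 0$ must satisfy $|a|<1$, hence be a strict contraction with a single attracting fixed point in $\D$, and such a map cannot permute the finite set $\clz(\theta)$ in the multiplicity-preserving fashion forced by Lemma \ref{lem: Javad}. Either path isolates rotations as the only candidates for $\varphi$, after which the backward basis computation delivers the equivalence in a uniform way valid regardless of the cardinality of $\clz(\theta)$.
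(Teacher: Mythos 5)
Your proof is correct, but the necessity direction takes a genuinely different route from the paper's. The paper, after disposing of the single-zero case exactly as you do, invokes Proposition \ref{affine} to write $\varphi(z)=\alpha z+\beta$, then kills $\beta$ by an analytic argument: it picks a zero $\lambda$ of \emph{minimal modulus}, expands $1/(1-\bar{\lambda}\varphi)$ in the basis of $\clq_\theta$, uses pole counting to reduce to $1/(1-\bar{\lambda}\varphi)=c/(1-\bar{\lambda}_k z)$, and applies Lemma \ref{lem1} (whose hypothesis $|p|\le|q|$ is exactly what the minimality of $|\lambda|$ buys) to conclude $\varphi$ is a rotation. It then establishes the multiplicity identity from scratch, via the increasing chain $mult_\theta(\lambda)\le mult_\theta(\bar{\alpha}\lambda)\le\cdots$ closed up by the finiteness of $\clz(\theta)$, which also yields $\bar{\alpha}^m=1$. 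You instead import both the affine form and the multiplicity identity wholesale from Lemma \ref{lem: Javad}, and eliminate the translation part algebraically: torsion in the finite cyclic group $L(\clq_\theta)$ gives $a^n=1$, hence $|a|=1$, and the elementary bound $|a|+|b|\le 1$ for affine self-maps of $\D$ then forces $b=0$, after which the multiplicity identity specializes to the stated one. Your route is shorter and cleaner at the cost of leaning on two quoted results (Lemma \ref{lem: Javad} and the group structure in Proposition \ref{affine}); the paper's route is more self-contained in that it re-derives the multiplicity condition and the torsion relation $\bar{\alpha}^m=1$ as byproducts, which it then reuses (e.g.\ in Section \ref{sec: cyclic}). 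Your sketched alternative for bypassing Proposition \ref{affine} also works, with the small caveat that the contraction argument should be applied to the induced map $\lambda\mapsto\bar{a}\lambda/(1-\bar{b}\lambda)$ (which is the map that must permute $\clz(\theta)$), not to $\varphi$ itself.
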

\begin{proof}
Suppose $\theta = \prod\limits_{i=1}^n b_{\lambda_i}^{m_i}$, where $\lambda_i$ are nonzero distinct elements in $\mathbb{D}$ and $m_i$ are natural numbers. Suppose that
$\varphi \in D(\clq_\theta)$. If $n=1$, then the result simply follows from Proposition \ref{single} (with the choice of $\alpha = 1$). Assume that $n\geq 2$. By Propositions \ref{prop: D sub L} and \ref{affine}, there exist constants $\alpha (\neq 0)$ and $\beta$ such that
\[
\varphi(z)=\alpha z+\beta.
\]
Choose $\lambda \in \clz(\theta)$ such that $|\lambda|=\min\left\{|\lambda_1|, \ldots, |\lambda_n|\right\}$. As $\dfrac{1}{1-\bar{\lambda}z} \in \clq_\theta$, we have
\[
\dfrac{1}{1-\bar{\lambda}z}\circ \varphi=\dfrac{1}{1-\bar{\lambda} \varphi}=\sum\limits_{i=1}^{n}\sum\limits_{j=1}^{m_i}
\dfrac{c_{ij}}{(1-\bar{\lambda}_i z)^j},
\]
for some constants $c_{ij}$ with at least one of them is nonzero. By the identity theorem, the equality holds on the entire complex plane except possibly at finitely many poles. Since the function on the left-hand side has exactly one pole of order $1$, the above equation implies that
$$
\dfrac{1}{1-\bar{\lambda}\varphi}=\dfrac{c}{1-\bar{\lambda}_k z},
$$
for some $k \in \{1, \ldots, n\}$ and $c\neq 0$. At this point, Lemma \ref{lem1} applies and yields
\[
\varphi(z)=\alpha z,
\]
for some constant $\alpha \in \T$. Fix $i \in \{1, \ldots, n\}$. As $\dfrac{1}{(1-\bar{\lambda}_i z)^{m_i}}\in \clq_\theta$, we have
\[
\frac{1}{(1-\bar{\lambda}_i \alpha z)^{m_i}} = \frac{1}{(1-\bar{\lambda}_i z)^{m_i}} \circ \vp \in \clq_\theta,
\]
and hence $\bar{\alpha}\lambda_i\in \clz(\theta)$. Indeed, if $\bar{\alpha}\lambda_i \notin \clz(\theta)$, then, by considering a basis for $\clq_\theta$ consisting of functions of the form given in \eqref{eqn: basis special}, we obtain
\[
\frac{1}{(1-\bar{\lambda}_i \alpha z)^{m_i}} \notin \clq_\theta,
\]
leading to a contradiction. Thus, we have that $\lambda \in \clz(\theta)$ implies $\bar{\alpha}\lambda \in \clz(\theta)$ with
\[
m_i = mult_{\theta}( \lambda_i) \leq mult_\theta (\bar{\alpha}\lambda_i).
\]
By applying this argument iteratively, we conclude, for all $\lambda \in \clz(\theta)$ and $k\in \mathbb{N}$, that
\[
\bar{\alpha}^k\lambda \in \clz(\theta),
\]
and
\[
mult_{\theta}( \lambda)\leq mult_\theta (\bar{\alpha}\lambda)\leq \cdots \leq mult_\theta (\bar{\alpha}^k\lambda).
\]
Moreover, as $\theta$ has only finitely many zeros, it follows that
\begin{equation}\label{eqn: alpha m = 1}
\bar{\alpha}^m=1,
\end{equation}
for some $m \in \mathbb{N}$, and then, for any $\lambda \in \clz(\theta)$, we have
\[
mult_{\theta}( \lambda)\leq mult_\theta (\bar{\alpha}\lambda)\leq \cdots \leq mult_\theta (\bar{\alpha}^m\lambda) = mult_\theta (\lambda),
\]
and hence, $mult_\theta (\lambda)=mult_{\theta}(\bar{\alpha} \lambda)$.

\noindent The converse part is easy: Suppose $\alpha$  is a constant such that $\varphi(z)=\alpha z$ and $mult_\theta (\lambda)= mult_{\theta}(\bar{\alpha} \lambda)$ for all $\lambda \in \clz(\theta)$. Then
for any arbitrary basis element $\dfrac{1}{(1-\bar{\lambda}z)^{l}}$ of $\clq_\theta$, for some $l \in \{1, \ldots, mult_{\theta}(\lambda)\}$, we have
\[
\frac{1}{(1-\bar{\lambda}z)^{l}}\circ \varphi = \frac{1}{(1-\bar{\lambda} \alpha z)^{l}} \in \clq_\theta,
\]
and hence $\varphi \in D(\clq_\theta)$. This completes the proof of the theorem.
\end{proof}

It is important to note that the identity $mult_\theta (\lambda)=mult_{\theta}(\bar{\alpha} \lambda)$ for all $\lambda \in \clz(\theta)$ in the above result can be rewritten as
\[
mult_\theta (\lambda)=mult_{\theta}(\lambda/\overline{\alpha}),
\]
for all $\lambda \in \clz(\theta)$. Since $|\alpha|=1$ (because $\bar{\alpha}^m=1$ for some $m\in \mathbb{N}$), the above condition is equivalent to
\[
mult_\theta (\lambda)=mult_{\theta}(\alpha \lambda),
\]
for all $\lambda \in \clz(\theta)$. This recovers the exact statement of \cite[Corollary 2.4 ]{Jav}. Similarly, it also recovers the version we proved above. As an immediate consequence of these results, we have the following:

\begin{corollary}\label{coro1}
Let $\theta$ be a finite Blaschke product and let $\alpha \in \T$. Assume that $\theta(0)\neq 0$. Then we have the following:
\begin{enumerate}
\item $\alpha z \in D(\clq_\theta)$  if and only if $\bar{\alpha} z \in D(\clq_\theta)$.
\item $\alpha z \in D(\clq_\theta)$  if and only if
$mult_\theta (\lambda)= mult_{\theta}(\alpha \lambda)$ for all  $\lambda \in \clz(\theta)$.
\item $\alpha z \not \in D(\clq_\theta)$ if and only if there exists an $\lambda \in \clz(\theta)$ such that $mult_\theta (\lambda)\neq mult_{\theta}(\alpha \lambda)$.
\end{enumerate}
\end{corollary}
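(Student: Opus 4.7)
The plan is to derive all three items essentially as immediate consequences of Theorem \ref{thm1}, leveraging the auxiliary fact, extracted from its proof, that whenever $\alpha z \in D(\clq_\theta)$ the scalar $\alpha$ must satisfy $\bar{\alpha}^m=1$ for some $m \in \mathbb{N}$ (see equation \eqref{eqn: alpha m = 1}). This root-of-unity property is the engine that lets me swap $\alpha$ and $\bar{\alpha}$ in the multiplicity condition, which in turn is what makes items (1) and (2) equivalent to Theorem \ref{thm1}.

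For (1), I would start from the direction $\alpha z \in D(\clq_\theta)$. By Theorem \ref{thm1}, this gives $mult_\theta(\lambda)=mult_\theta(\bar{\alpha}\lambda)$ for all $\lambda \in \clz(\theta)$, and in particular $\bar{\alpha}\lambda \in \clz(\theta)$ (since the left-hand side is positive). Picking $m$ with $\bar{\alpha}^m=1$, so that $\alpha = \bar{\alpha}^{m-1}$, I would iterate the multiplicity equality $m-1$ times (legitimate because each intermediate point $\bar{\alpha}^k\lambda$ lies in $\clz(\theta)$) to obtain
\[
mult_\theta(\lambda)=mult_\theta(\bar{\alpha}\lambda)=\cdots=mult_\theta(\bar{\alpha}^{m-1}\lambda)=mult_\theta(\alpha\lambda).
\]
Reading this as the hypothesis of Theorem \ref{thm1} applied to the symbol $\bar{\alpha}z$ (whose complex conjugate scalar is $\alpha$) yields $\bar{\alpha}z \in D(\clq_\theta)$. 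The reverse implication is identical after interchanging the roles of $\alpha$ and $\bar{\alpha}$.

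For (2), I would combine (1) with Theorem \ref{thm1} applied to the symbol $\bar{\alpha}z$: by (1), $\alpha z \in D(\clq_\theta)$ iff $\bar{\alpha}z \in D(\clq_\theta)$, and by Theorem \ref{thm1} the latter is equivalent to $mult_\theta(\lambda)=mult_\theta(\alpha\lambda)$ for every $\lambda \in \clz(\theta)$. Item (3) is then simply the logical contrapositive of (2) and requires no separate argument.

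The only real subtlety is the bookkeeping in step (1), namely justifying the iteration of the multiplicity equality and the resulting identification $\bar{\alpha}^{m-1}=\alpha$. Both points are already supplied by the proof of Theorem \ref{thm1}, so no new analytic or group-theoretic input is needed; the corollary is essentially a notational repackaging of that theorem together with the observation that the orbit of $\lambda$ under multiplication by $\bar{\alpha}$ is a finite cyclic set on which $\theta$ has constant multiplicity.
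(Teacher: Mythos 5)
Your proposal is correct and follows essentially the same route as the paper: the paper also derives the corollary directly from Theorem \ref{thm1} together with the observation (stated just before the corollary) that, via $\bar{\alpha}^m=1$, the condition $mult_\theta(\lambda)=mult_\theta(\bar{\alpha}\lambda)$ for all $\lambda\in\clz(\theta)$ is equivalent to $mult_\theta(\lambda)=mult_\theta(\alpha\lambda)$ for all $\lambda\in\clz(\theta)$. Your write-up merely makes explicit the iteration along the finite orbit that the paper leaves implicit.
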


Since we have been working with two sets, $L(\clq_\theta)$ and $D(\clq_\theta)$, from the beginning, one may naturally ask whether these sets are equal. While there is evidence that they are often not closely related, aside from the fact that $D(\clq_\theta)$ is contained in $L(\clq_\theta)$, up to a rotation of $\theta$ (see Proposition \ref{prop: D sub L}), we show in the following that, when $\theta$ does not vanish at the origin and $D(\clq_\theta)$ is nontrivial, the two sets are indeed equal.

\begin{theorem}\label{thm: equality}
Let $\theta$ be a finite Blaschke product. Suppose $\theta(0)\neq 0$ and $D(\clq_\theta)\neq \{z\}$. Then
\[
D(\clq_\theta)=L(\clq_\theta).
\]
\end{theorem}
\begin{proof}
As pointed out above, by Proportion \ref{prop: D sub L}, we already know that $D(\clq_\theta) \subseteq L(\clq_\theta)$. In view of Proposition \ref{single}, we know that
\[
m:=\#\clz(\theta) \geq 2.
\]
Indeed, if $m=1$, then by Proposition \ref{single}, $D(\clq_\theta)$ is trivial, which contradicts the hypothesis. Hence, $m \geq 2$, which establishes the claim. By Proposition \ref{affine}, there exist $a, b \in \mathbb{C}$ and $n \in \mathbb{N}$ such that $\varphi(z)=az+b$ and
\[
L(\clq_\theta)= \left\{ z, \varphi, \varphi^{[2]}, \dots, \varphi^{[n-1]} \right\},
\]
and $\varphi^{[n]}=z$. Since $D(\clq_\theta)\neq \{z\}$, by Theorem \ref{thm1}, there exists $\alpha\neq 1$ such that $\alpha z\in D(\clq_\theta)$. As a consequence of Proposition \ref{prop: D sub L}, $\varphi^{[k]}=\alpha z$ for some $k\leq n-1$. At this point, based on the coefficient $b$, we consider two cases:

\noindent \textit{Case I:} Suppose $b\neq 0$. If $a=1$, then $L(\clq_\theta)$ contains the infinite set
\[
\{\varphi=z+b, \varphi^{[2]}=z+2b, \varphi^{[3]}=z+3b, \ldots\},
\]
which is not possible, as $L(\clq_\theta)$ is a finite set. Thus, $a\neq 1$, and therefore
\[
\varphi^{[k]}=a^kz+(a^{k-1}+a^{k-2}+\cdots+1)b=a^kz+\left(\frac{a^k-1}{a-1}\right)b=\alpha z.
\]
Comparing coefficients, we obtain $a^k-1=0$, and consequently $\alpha=1$, which leads to a contradiction to the fact that $\alpha\neq 1$.

\noindent \textit{Case II:} Suppose $b = 0$. In this case, $\varphi(z)=az$. Lemma \ref{lem: Javad} implies
\[
mult_\theta (\lambda) = mult_{\theta}(\bar{a} \lambda),
\]
for all $\lambda \in \clz(\theta)$. Fix $\lambda \in \clz(\theta)$. Then, $\bar{a} \lambda, \bar{a}^2 \lambda,\ldots$ are zeros of the finite Blaschke product  $\theta$, which forces $a^m=1$ for some $m\in \mathbb{N}$. Using Theorem \ref{thm1}, we conclude that $\varphi(z)=az\in D(\clq_\theta)$, and consequently, we obtain
\[
L(\clq_\theta)= \left\{ z, \varphi, \varphi^{[2]}, \dots, \varphi^{[n-1]} \right\}\subseteq D(\clq_\theta).
\]
This completes the proof.
\end{proof}

Results concerning representations of functions in $D(\clq_\theta)$ and $L(\clq_\theta)$, corresponding to finite Blaschke products $\theta$ that vanish at the origin, will be considered in Section \ref{sec: not vanish}.

\section{Cyclic groups}\label{sec: cyclic}

Here, we continue with the setting of the previous section and apply elementary group-theoretic tools to study the sets $D(\clq_\theta)$ and $L(\clq_\theta)$.

In the setting described in Theorem \ref{thm1}, we know that $\alpha$ is of finite order-specifically,
\[
\alpha^m = 1,
\]
as observed in equation \eqref{eqn: alpha m = 1} (and also in \cite[Corollary 2.4]{Jav}, as previously noted). We now turn to providing a precise interpretation of the index $m$  and its role in the structure of $D(\clq_\theta)$ and $L(\clq_\theta)$. This is done through the lens of finite cyclic groups. To that end, we first present another characterization of functions in the class $D(\clq_\theta)$. For each $n \in \mathbb{N}$, in what follows, we write
\[
J_n = \{1, \ldots, n\},
\]
and denote by $S_n$ the symmetric group of degree $n$.

\begin{theorem}\label{thm2}
Let $\{\lambda_1, \ldots, \lambda_n\} \subseteq \mathbb{D} \setminus \{0\}$ be a set of distinct elements and let $m_i \in \mathbb{N}$, $i=1, \ldots, n$. Let $\theta = \prod\limits_{i=1}^n b_{\lambda_i}^{m_i}$. Then $\varphi\in D(\clq_\theta)$ if and only if $\varphi=\omega z$, where
\[
\bar{\omega}=\frac{\lambda_{\sigma(1)}}{\lambda_1}=\frac{\lambda_{\sigma(2)}}{\lambda_2}=\dots=\frac{\lambda_{\sigma(n)}}{\lambda_n},
\]
for some $\sigma \in S_n$ with $m_i=m_{\sigma(i)}$ for all $i \in J_n$.
\end{theorem}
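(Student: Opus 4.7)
The plan is to reduce the statement to Theorem \ref{thm1}, which already identifies $D(\clq_\theta)$ as the set of rotations $\varphi(z) = \omega z$, $\omega \in \T$, satisfying $mult_\theta(\lambda) = mult_\theta(\bar{\omega} \lambda)$ for every $\lambda \in \clz(\theta)$. The remaining work is purely combinatorial: translate this multiplicity-preserving condition into the permutation/ratio form in the statement.

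For the forward direction, I would start with $\varphi \in D(\clq_\theta)$ and invoke Theorem \ref{thm1} to write $\varphi(z) = \omega z$ for some $\omega \in \T$ with $mult_\theta(\lambda_i) = mult_\theta(\bar{\omega}\lambda_i)$ for every $i \in J_n$. Since $mult_\theta(\bar{\omega}\lambda_i) = m_i \geq 1$, we must have $\bar{\omega}\lambda_i \in \clz(\theta)$, so there exists a unique index $\sigma(i) \in J_n$ with $\bar{\omega}\lambda_i = \lambda_{\sigma(i)}$. The map $\sigma: J_n \to J_n$ is injective: if $\sigma(i) = \sigma(j)$ then $\bar{\omega}\lambda_i = \bar{\omega}\lambda_j$, and since $\bar{\omega} \neq 0$, distinctness of the $\lambda_i$ forces $i = j$. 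Finiteness of $J_n$ then gives $\sigma \in S_n$. Solving for $\bar{\omega}$ yields the chain of equalities in the statement, and the multiplicity condition reads $m_i = mult_\theta(\lambda_i) = mult_\theta(\lambda_{\sigma(i)}) = m_{\sigma(i)}$.

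For the converse, assume $\varphi(z) = \omega z$ with $\bar{\omega} = \lambda_{\sigma(i)}/\lambda_i$ for all $i \in J_n$ and $m_i = m_{\sigma(i)}$ for some $\sigma \in S_n$. The first task is to verify $\omega \in \T$, which is where the permutation structure is used crucially. Taking the product over $i \in J_n$ of the $n$ identities $\bar{\omega}\lambda_i = \lambda_{\sigma(i)}$ gives
\[
\bar{\omega}^n \prod_{i=1}^n \lambda_i = \prod_{i=1}^n \lambda_{\sigma(i)} = \prod_{i=1}^n \lambda_i,
\]
and since each $\lambda_i$ is nonzero, we obtain $\bar{\omega}^n = 1$, hence $|\omega| = 1$. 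Then for any $\lambda_i \in \clz(\theta)$ we have $\bar{\omega}\lambda_i = \lambda_{\sigma(i)} \in \clz(\theta)$ with $mult_\theta(\bar{\omega}\lambda_i) = m_{\sigma(i)} = m_i = mult_\theta(\lambda_i)$. Theorem \ref{thm1} now gives $\varphi \in D(\clq_\theta)$.

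I do not anticipate any serious obstacle: the theorem is essentially a bookkeeping reformulation of Theorem \ref{thm1}, with the one nontrivial point being that $\omega \in \T$ is forced (rather than assumed) in the converse direction via the product identity over the permutation $\sigma$.
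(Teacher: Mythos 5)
Your proposal is correct and follows essentially the same route as the paper: the forward direction (invoking Theorem \ref{thm1} to get a rotation, then defining $\sigma$ by $\bar{\omega}\lambda_i=\lambda_{\sigma(i)}$ and using injectivity plus finiteness) matches the paper's argument verbatim, and your converse simply cites the converse of Theorem \ref{thm1} where the paper re-verifies invariance directly on the basis elements $1/(1-\bar{\lambda}_i z)^k$. Your explicit check that $\bar{\omega}^n=1$ (hence $\omega\in\T$) via the product over the permutation is exactly the content of the paper's Remark \ref{rem1}, so nothing is missing.
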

\begin{proof}
Let $\varphi \in D(\clq_\theta)$. By Theorem \ref{thm1}, there exists $\omega \in \T$ such that $\varphi(z) = \omega z$ and $\bar{\omega}\lambda\in \clz(\theta)$ for all $\lambda \in \clz(\theta)$. Thus for each $i \in J_n$, there is an $j\in J_n$ such that $\bar{\omega}\lambda_i= \lambda_j$. Define $\sigma : J \to J$ by
\[
\overline{\omega}\lambda_i= \lambda_{\sigma(i)},
\]
for all $i\in J_n$. It is easy to see that $\sigma$ is injective. Since $J_n$ is finite set, $\sigma$ is onto. Thus $\sigma \in S_n$ and
\begin{equation*}
\overline{\omega}=\frac{\lambda_{\sigma(1)}}{\lambda_1}=\frac{\lambda_{\sigma(2)}}{\lambda_2}=\dots=\frac{\lambda_{\sigma(n)}}{\lambda_n}.
\end{equation*}
Since $\omega z\in D(\clq_\theta)$, again by Theorem \ref{thm1}, for each $i\in J_n$, we have
\[
m_i=mult_\theta (\lambda_i)=mult_{\theta}(\overline{\omega} \lambda_i)=mult_\theta (\lambda_{\sigma(i)})=m_{\sigma(i)}.
\]
For the converse direction, assume $\vp = \omega z$, where $\omega$ satisfies the identity given in the statement. Then
\begin{equation*}
C_\vp\left( \frac{1}{(1-\bar{\lambda}_i z)^k}\right) = \frac{1}{(1-\bar{\lambda}_i z)^k} \circ \varphi= \frac{1}{(1-\bar{\lambda}_i \omega z)^k}=\frac{1}{(1-\overline{\lambda_{\sigma(i)}} z)^k} \in \clq_\theta,
\end{equation*}
for all $k \in \{1, \ldots, m_i\}$ and $i \in J_n$. Hence $\varphi \in D(\clq_\theta)$, completing the proof of the theorem.
\end{proof}

\begin{remark}\label{rem1}
In the setting of Theorem \ref{thm2}, pick $\omega z\in D(\clq_\theta)$ and $\sigma \in S_n$. Clearly
\begin{equation*}
\omega^n=\frac{\overline{\lambda_{\sigma(1)}}}{\overline{\lambda_1}}\cdot \frac{\overline{\lambda_{\sigma(2)}}}{\overline{\lambda_2}}\cdots \frac{\overline{\lambda_{\sigma(n)}}}{\overline{\lambda_n}}=1.
\end{equation*}
Moreover, if $\sigma$ contains a cycle $(i_1 i_2 \dots i_k)$, that is, if $\sigma(i_1)=i_2, \sigma(i_2)=i_3, \ldots, \sigma(i_{k-1})=i_k$, and $\sigma(i_k)=i_1$, then
\begin{equation*}
\omega^k=\frac{\overline{\lambda_{\sigma(i_1)}}}{\overline{\lambda_{i_1}}}\cdot \frac{\overline{\lambda_{\sigma(i_2)}}}{\overline{\lambda_{i_2}}}\cdots \frac{\overline{\lambda_{\sigma(i_k)}}}{\overline{\lambda_{i_k}}}=1.
\end{equation*}
\end{remark}

Recall that for each $n\in \mathbb{N}$, we use $\langle e^{\frac{2\pi i}{n}} z \rangle$ to denote the finite cyclic group
\[
\left\{e^{\frac{2\pi i t}{n}} z: t = 0, 1, \ldots, n-1\right\},
\]
under composition generated by $e^{\frac{2\pi i}{n}} z$, where $ e^{\frac{2\pi i}{n}}$ is the primitive $n$-th root of unity. The above remark yields the following corollary:

\begin{corollary}\label{coro2}
Let $\theta$ be a finite Blaschke product with $\theta(0)\neq 0$. If $n := \# \clz(\theta)$, then
\begin{equation*}
D(\clq_\theta)\subseteq \langle e^{\frac{2\pi i}{n}} z \rangle.
\end{equation*}
\end{corollary}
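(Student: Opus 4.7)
The plan is to bootstrap directly from Theorem \ref{thm2} and Remark \ref{rem1}. The statement is really a group-theoretic consequence: any admissible rotation factor $\omega$ must be an $n$-th root of unity, where $n = \#\clz(\theta)$.

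First I would handle the trivial case $n=1$ separately by invoking Proposition \ref{single}, which gives $D(\clq_\theta) = \{z\}$; this is contained in any cyclic group $\langle e^{2\pi i/n}z\rangle$. Assuming then that $n \geq 2$, I would write $\theta = \prod_{i=1}^n b_{\lambda_i}^{m_i}$ with distinct nonzero $\lambda_i$ (possible since $\theta(0)\neq 0$) and pick an arbitrary $\varphi \in D(\clq_\theta)$.

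Next I would apply Theorem \ref{thm2} to produce $\omega \in \T$ and a permutation $\sigma \in S_n$ with $\varphi = \omega z$ and
\[
\bar{\omega} = \frac{\lambda_{\sigma(i)}}{\lambda_i} \qquad (i = 1, \ldots, n).
\]
Taking the product over $i = 1, \ldots, n$ and using that $\sigma$ is a bijection on $J_n$ (so numerator and denominator each run over the full set $\{\lambda_1, \ldots, \lambda_n\}$), I obtain
\[
\bar{\omega}^n = \prod_{i=1}^{n} \frac{\lambda_{\sigma(i)}}{\lambda_i} = 1,
\]
which is exactly the identity recorded in Remark \ref{rem1}. Hence $\omega^n = 1$, so $\omega$ is an $n$-th root of unity, i.e.\ $\omega = e^{2\pi i t/n}$ for some $t \in \{0, 1, \ldots, n-1\}$. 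Therefore $\varphi = \omega z \in \langle e^{2\pi i/n} z\rangle$, and the inclusion follows.

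There is no real obstacle here; the work has already been done in Theorem \ref{thm2} and Remark \ref{rem1}. The only thing worth stating carefully is the telescoping argument that gives $\bar{\omega}^n = 1$, together with the observation that $\omega^n = 1 \Leftrightarrow \bar{\omega}^n = 1$. The corollary is essentially a repackaging: Theorem \ref{thm2} identifies $\omega$ with a product of zero-ratios permuted by $\sigma$, and the finiteness of the zero set forces the order of $\omega$ to divide $n$.
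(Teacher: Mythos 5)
Your argument is correct and is exactly the route the paper takes: Corollary \ref{coro2} is derived by combining Theorem \ref{thm2} with the telescoping product identity $\omega^n=\prod_{i=1}^{n}\overline{\lambda_{\sigma(i)}}/\overline{\lambda_i}=1$ recorded in Remark \ref{rem1}. Your separate treatment of $n=1$ and the explicit remark that $\omega^n=1\Leftrightarrow\bar{\omega}^n=1$ are harmless additions, not deviations.
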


We can say a little more about $D(\clq_\theta)$:

\begin{theorem}\label{thm3}
Let $\theta$ be a finite Blaschke product with $\theta(0)\neq 0$. If $n := \# \clz(\theta)$, then $D(\clq_\theta)$ is a cyclic subgroup of $\langle e^{\frac{2\pi i}{n}}z \rangle$.
\end{theorem}
\begin{proof}
Let $\varphi \in D(\clq_\theta)$. By Theorem \ref{thm1}, there exists $\omega \in \T$ such that $\varphi = \omega z$. By part (1) of Corollary \ref{coro1}, we have
\[
\frac{1}{\omega}z=\overline{\omega} z \in D(\clq_\theta).
\]
Note that $\overline{\omega} z$ is the inverse of $\omega z$ under composition. Hence $D(\clq_\theta)$ is a subgroup of the cyclic group $\langle e^{\frac{2\pi i}{n}}z \rangle$.
\end{proof}

We now recall the general fact that the order of any subgroup of a finite cyclic group divides the order of the group. Therefore, we obtain the following corollary, which is comparable to the analogue of $L(\clq_\theta)$ in \cite[Theorem 2.3]{Jav}:

\begin{corollary}\label{coro3}
Let $\theta$ be a finite Blaschke product. Assume that $\theta(0)\neq 0$. Then
\begin{equation*}
D(\clq_\theta)=\langle e^{\frac{2\pi i}{d}}z \rangle,
\end{equation*}
for some divisor $d$ of $\# \clz(\theta)$.
\end{corollary}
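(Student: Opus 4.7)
The plan is to essentially invoke Lemma \ref{thm3} together with the classical structure theorem for subgroups of a finite cyclic group. Lemma \ref{thm3} already does the heavy lifting: it establishes that $D(\clq_\theta)$ sits inside the concrete cyclic group $G := \langle e^{\frac{2\pi i}{n}} z \rangle$ of order $n = \# \clz(\theta)$ (with composition as the operation). Hence $D(\clq_\theta)$ is a subgroup of a finite cyclic group, and the task reduces to identifying which subgroup it is in terms of the standard cyclic generators.

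The key step I would carry out is to transfer the question to $\mathbb{Z}/n\mathbb{Z}$. The natural group isomorphism
\[
G \longrightarrow \mathbb{Z}/n\mathbb{Z}, \qquad e^{\frac{2\pi i k}{n}} z \longmapsto k \pmod{n},
\]
carries $D(\clq_\theta)$ onto a subgroup of $\mathbb{Z}/n\mathbb{Z}$. By the standard classification, every subgroup of $\mathbb{Z}/n\mathbb{Z}$ is of the form $\frac{n}{d} \mathbb{Z}/n\mathbb{Z}$ for some unique divisor $d$ of $n$, and this subgroup has order exactly $d$. Pulling this back to $G$, the unique subgroup of order $d$ is generated by
\[
\bigl(e^{\frac{2\pi i}{n}} z\bigr)^{\circ \frac{n}{d}} = e^{\frac{2\pi i}{d}} z,
\]
where the exponent denotes iterated composition. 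Since composition of rotations $e^{i\alpha} z$ and $e^{i\beta} z$ is simply $e^{i(\alpha+\beta)} z$, this iterated composition coincides with ordinary exponentiation of the scalar. Therefore
\[
D(\clq_\theta) = \bigl\langle e^{\frac{2\pi i}{d}} z \bigr\rangle
\]
for some divisor $d$ of $n$, as claimed.

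I do not anticipate any real obstacle here, since every nontrivial input (the fact that elements of $D(\clq_\theta)$ are rotations, that $D(\clq_\theta)$ is closed under taking inverses, and that it is cyclic) has already been established in Theorem \ref{thm1}, Corollary \ref{coro1}, and Lemma \ref{thm3}. The only bookkeeping to be careful about is that the identification of the composition operation on rotations with addition of exponents mod $2\pi$ lines up correctly with the isomorphism to $\mathbb{Z}/n\mathbb{Z}$, but this is routine.
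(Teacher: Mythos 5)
Your proposal is correct and follows essentially the same route as the paper: both rest on Lemma \ref{thm3} (that $D(\clq_\theta)$ is a subgroup of the cyclic group $\langle e^{\frac{2\pi i}{n}}z\rangle$) and then the standard classification of subgroups of a finite cyclic group. The paper merely states this step more tersely, while you spell out the isomorphism with $\mathbb{Z}/n\mathbb{Z}$; the content is identical.
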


As an example, we consider the following:

\begin{example}
Suppose $\theta$ is a finite Blaschke product with four distinct zeros and $\theta(0)\neq 0$. Then there are exactly three possible cyclic groups for $D(\clq_\theta)$:
\[
\{z\}, \langle -z\rangle =\{z, -z\}, \text{ and } \langle iz\rangle= \{z, -z, iz, -iz\}.
\]
\end{example}

Given an integer $n$ and a divisor $d$ of $n$, one expects that there exists a finite Blaschke product $\theta$ such that $\# \clz(\theta) = n$ and $D(\clq_\theta)=\langle e^{\frac{2\pi i}{d}}z\rangle$. This is indeed the case:

\begin{theorem}
Let $n \in \mathbb{N}$, and let $d$ be a divisor of $n$. Then there exists a Blaschke product $\theta$ such that $n := \# \clz(\theta)$ and
\begin{equation*}
D(\clq_\theta)=\langle e^{\frac{2\pi i}{d}}z\rangle.
\end{equation*}
\end{theorem}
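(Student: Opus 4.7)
The plan is to build $\theta$ explicitly so that its zero set consists of $k := n/d$ concentric orbits of the rotation group $\langle e^{2\pi i/d}z\rangle$, taking all multiplicities equal to $1$ so that the multiplicity hypotheses of Theorem \ref{thm1} collapse to a pure set-theoretic condition on the zero set.

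Concretely, I would fix any distinct radii $0 < r_1 < r_2 < \cdots < r_k < 1$ and define
\[
\lambda_{ij} = r_i\, e^{\frac{2\pi i j}{d}} \qquad (1 \leq i \leq k,\; 0 \leq j \leq d-1),
\]
and then set $\theta = \prod_{i,j} b_{\lambda_{ij}}$. Since the $r_i$ are distinct and the $e^{2\pi ij/d}$ are distinct for fixed $i$, the $n = kd$ zeros are distinct and all lie in $\D\setminus\{0\}$, so $\theta(0)\neq 0$ and $\#\clz(\theta)=n$. The inclusion $\langle e^{\frac{2\pi i}{d}}z\rangle \subseteq D(\clq_\theta)$ is then immediate from Theorem \ref{thm1}: rotation by $e^{2\pi i/d}$ permutes the $j$-index within each orbit $\{\lambda_{ij}\}_{j=0}^{d-1}$, all multiplicities equal $1$, and so the multiplicity-preservation hypothesis is trivially verified.

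For the reverse inclusion, I invoke Corollary \ref{coro3} to conclude that $D(\clq_\theta) = \langle e^{\frac{2\pi i}{m}} z\rangle$ for some divisor $m$ of $n$. Because $\langle e^{\frac{2\pi i}{d}}z\rangle$ is already contained in $D(\clq_\theta)$, I know $d \mid m$, so I may write $m = de$ with $e \mid k$; the task reduces to ruling out $e \geq 2$. Suppose $\alpha z := e^{2\pi i/(de)}z \in D(\clq_\theta)$. Then $\bar{\alpha}\lambda_{ij} \in \clz(\theta)$ for every $i,j$, and since this point has modulus $r_i$ it must equal $\lambda_{i\ell}$ for some $\ell$. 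Writing out the arguments gives the congruence
\[
1 \equiv (\ell - j)e \pmod{de},
\]
which forces $e \mid 1$, a contradiction. Hence $e = 1$, $m = d$, and $D(\clq_\theta) = \langle e^{\frac{2\pi i}{d}}z\rangle$ as desired.

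The main subtlety will be the congruence argument in the last paragraph, but it is essentially a one-line modular calculation; everything else is bookkeeping. The construction is flexible — any choice of $k$ distinct radii works — so no delicate estimate is needed, which keeps the proof clean.
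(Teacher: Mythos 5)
Your proof is correct, and while it shares the paper's basic blueprint --- zeros arranged in $n/d$ orbits of the rotation group $\langle e^{2\pi i/d}z\rangle$ placed on distinct circles $|z|=r_t$ --- the two arguments diverge in how they pin down the reverse inclusion. The paper assigns multiplicity $t$ to the $t$-th orbit and uses Theorem \ref{thm2}: the multiplicity-preservation condition forces the induced permutation $\sigma$ to stabilize each block of $d$ indices, and then the telescoping product $\bar{\beta}^d=\prod_{i=1}^{d}\lambda_{\sigma(i)}/\lambda_i=1$ yields $D(\clq_\theta)\subseteq\langle e^{2\pi i/d}z\rangle$ directly, without invoking Corollary \ref{coro3}. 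You instead keep all multiplicities equal to $1$, use the distinct moduli to confine each rotated zero to its own circle, and then run the argument-congruence $(j-\ell)e\equiv\pm 1\pmod{de}$ to force $e=1$; the group-theoretic bookkeeping ($d\mid m$, $m=de$ with $e\mid k$) is handled by Corollary \ref{coro3} plus Lagrange. Your version buys a witness $\theta$ of minimal degree $n$ with only simple zeros, whereas the paper's witness has degree $d\cdot m(m+1)/2$; the paper's version buys a self-contained computation of the order of $D(\clq_\theta)$ that does not lean on the classification of its possible subgroups. Both the forward inclusion (via Corollary \ref{coro1} and the semigroup property) and your modular calculation check out; the only cosmetic blemish is the double use of $i$ as an index and as the imaginary unit in $\lambda_{ij}=r_ie^{2\pi ij/d}$.
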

\begin{proof}
Let $md=n$. Fix scalars $0<r_1< \cdots< r_m<1$, and consider the finite Blaschke product
\[
\theta = \prod_{t=1}^m \prod_{s=1}^d b_{\alpha^s r_t}^{t},
\]
where $\alpha= e^{\frac{2\pi i}{d}}$. Clearly, $\# \clz(\theta) = n$, and
\begin{equation*}
\clz(\theta) = \left\{\alpha^sr_t: 1 \leq s \leq d, 1 \leq t \leq m\right\},
\end{equation*}
and
\[
mult_\theta(\alpha^sr_t) = t,
\]
for all $1\leq s \leq d$. In particular, $\theta$ has exactly $n (=d m)$ zeros, all located on the union of the $m$ circles $|z|=r_j$, $j=1, \ldots, m$. Label all zeros of $\theta$ as $\lambda_1, \ldots, \lambda_n$ so that
\[
\lambda_1=r_1\alpha, \ldots, \lambda_d=r_1\alpha^d,
\]
and then
\[
\lambda_{d+1} = r_2\alpha, \ldots, \lambda_{2d}=r_2\alpha^d,
\]
and so on. Accordingly, the first $d$ zeros have multiplicity $1$, the next $d$ zeros have multiplicity $2$, and so forth. Suppose $\varphi \in D(\clq_\theta)$, that is, $C_\varphi(\clq_\theta) \subseteq \clq_\theta$. By Theorem \ref{thm2}, there exist $\beta \in \T$ and $\sigma \in S_n$ such that $\varphi(z)=\beta z$ and
\[
\bar{\beta} = \dfrac{\lambda_{\sigma(i)}}{\lambda_i},
\]
and $mult_\theta(\lambda_i) = mult_\theta(\lambda_{\sigma(i)})$ for all $i=1, \ldots, n$. In particular, this implies that the restriction of $\sigma$ acts as a permutation on each of the sets $\{1, \ldots, d\}$, $\{d+1, \dots, 2d\}$, and so on. In particular,
\begin{equation*}
\bar{\beta}^d=\frac{\lambda_{\sigma(1)}}{\lambda_1}\cdot \frac{\lambda_{\sigma(2)}}{\lambda_2} \cdots \frac{\lambda_{\sigma(d)}}{\lambda_d}=1,
\end{equation*}
that is, $\beta^d=1$. As $\beta \in \left\{1, \alpha, \ldots, \alpha^{d-1}\right\}$, it follows that $D(\clq_\theta)\subseteq \langle \alpha z \rangle$. For the reverse inclusion, note that from our construction it is clear that
\[
mult_\theta (\lambda)= mult_{\theta}(\alpha \lambda),
\]
for all  $\lambda \in \clz(\theta)$. Then, by part (2) of Corollary \ref{coro1}, it follows that $\alpha z \in D(\clq_\theta)$. Since $D(\clq_\theta)$ is a semi-group, we conclude that
\[
\{\alpha z, \alpha^2 z, \dots, \alpha^{d-1}z, \alpha^dz=z\} \subseteq D(\clq_\theta),
\]
and hence $\langle \alpha z \rangle \subseteq D(\clq_\theta)$. This completes the proof.
\end{proof}

Given a finite Blaschke product $\theta$ with $\theta(0)\neq 0$, by Corollary \ref{coro2}, we know that $D(\clq_\theta) \subseteq \langle e^{\frac{2\pi i}{n}}z\rangle$, where $\# \clz(\theta) = n$. It is natural to ask, when do we have
\[
D(\clq_\theta) = \langle e^{\frac{2\pi i}{n}}z\rangle?
\]
In the following, we answer to this question. Before that, we set up a notation: Given a set of $n$ distinct points $\{\lambda_1, \ldots, \lambda_n\} \subseteq \mathbb{D} \setminus \{0\}$, we define
\[
\Lambda_n = \left\{\dfrac{\lambda_1}{\lambda_1}, \dfrac{\lambda_2}{\lambda_1}, \dots, \dfrac{\lambda_n}{\lambda_1}\right\}.
\]
Note that a rearrangement of $\{\lambda_1, \ldots, \lambda_n\}$ may yield a different set $\Lambda_n$. However, one may fix the labeling to redefine $\Lambda_n$, and such relabeling will not affect the following result.

\begin{theorem}\label{thm4}
Let $\{\lambda_1,\ldots, \lambda_n\}$ be a set of $n$ distinct points in $\mathbb{D} \setminus \{0\}$ and let $m_i \in \mathbb{N}$, $i=1, \ldots, n$. If
\[
\theta = \prod_{i=1}^{n} b_{\lambda_i}^{m_i},
\]
then
\[
D(\clq_\theta) = \langle e^{\frac{2\pi i}{n}}z\rangle,
\]
if and only if $\Lambda_n$ is a multiplicative group, and
\[
m_1=m_2=\dots=m_n.
\]
\end{theorem}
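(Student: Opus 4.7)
The plan is to reduce the equality $D(\clq_\theta) = \langle e^{2\pi i/n} z\rangle$ to the single condition $e^{2\pi i/n}z \in D(\clq_\theta)$ by invoking Corollary \ref{coro3}, then to translate this condition, via Corollary \ref{coro1}(2), into a statement about how multiplication by the primitive root of unity $\omega := e^{2\pi i/n}$ acts on $\clz(\theta)$ and its multiplicity function. The forward direction then follows from an orbit-counting argument, and the converse from the elementary fact that any finite multiplicative subgroup of $\mathbb{C}^*$ is a group of roots of unity.

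\textbf{Step 1 (reduction).} First I would observe that by Corollary \ref{coro3}, we have $D(\clq_\theta) = \langle e^{2\pi i/d}z\rangle$ for some divisor $d$ of $n$, and therefore $D(\clq_\theta) = \langle \omega z\rangle$ if and only if $\omega z \in D(\clq_\theta)$. By Corollary \ref{coro1}(2), this last condition is equivalent to $\text{mult}_\theta(\lambda) = \text{mult}_\theta(\omega \lambda)$ for every $\lambda \in \clz(\theta)$, which in particular forces $\omega \cdot \clz(\theta) = \clz(\theta)$ and the preservation of multiplicities under multiplication by $\omega$.

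\textbf{Step 2 (forward direction).} Assume $D(\clq_\theta) = \langle \omega z\rangle$, so $\omega$ permutes $\clz(\theta)$ as above. Since $\omega$ is a primitive $n$-th root of unity and $|\clz(\theta)| = n$, the orbit of $\lambda_1$ under the cyclic action of $\langle \omega \rangle$ has size exactly $n$ (the smallest $k$ with $\omega^k\lambda_1 = \lambda_1$ is $n$), so it exhausts $\clz(\theta)$. Thus $\clz(\theta) = \{\lambda_1, \omega\lambda_1, \ldots, \omega^{n-1}\lambda_1\}$, which yields $\Lambda_n = \{1, \omega, \ldots, \omega^{n-1}\}$, the group of $n$-th roots of unity. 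Iterating the multiplicity equality $\text{mult}_\theta(\lambda) = \text{mult}_\theta(\omega \lambda)$ along this orbit then forces $m_1 = m_2 = \cdots = m_n$.

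\textbf{Step 3 (converse).} Conversely, assume $\Lambda_n$ is a multiplicative group and that all $m_i$ coincide. A finite subgroup of $\mathbb{C}^*$ must lie in $\mathbb{T}$ (any element off the unit circle generates an infinite cyclic subgroup) and is therefore cyclic; since $|\Lambda_n| = n$, we conclude $\Lambda_n$ is precisely the group of $n$-th roots of unity and, in particular, $\omega \in \Lambda_n$. Writing $\clz(\theta) = \lambda_1 \Lambda_n$, multiplication by $\omega$ simply permutes $\clz(\theta)$ cyclically, and the assumption that all multiplicities are equal makes this permutation multiplicity-preserving. Corollary \ref{coro1}(2) then gives $\omega z \in D(\clq_\theta)$, and combining with the inclusion $D(\clq_\theta) \subseteq \langle \omega z\rangle$ from Corollary \ref{coro2} yields the desired equality.

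The main obstacle, though a modest one, is Step 2: extracting from the bare statement ``$\omega$ permutes $\clz(\theta)$'' the fact that the action is transitive. Once one notices that primitivity of $\omega$ forces every orbit to have size $n$, the remaining details are routine.
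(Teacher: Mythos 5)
Your proposal is correct and follows essentially the same route as the paper's proof: both directions hinge on Corollary \ref{coro1}(2) to detect $e^{2\pi i/n}z \in D(\clq_\theta)$, on Corollary \ref{coro2} for the containment $D(\clq_\theta) \subseteq \langle e^{2\pi i/n}z\rangle$, and on the observation that a set of $n$ distinct ratios forming a multiplicative group must be exactly the $n$-th roots of unity. Your orbit-counting phrasing of the forward direction is just a slightly more explicit version of the paper's argument that $\{\alpha\lambda_1,\ldots,\alpha^{n-1}\lambda_1\}$ already exhausts $\clz(\theta)$.
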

\begin{proof}
Let $\alpha= e^{\frac{2\pi i}{n}}$. First, we prove the sufficiency part. Since all the $\lambda_i$'s are distinct, the multiplicative group $\Lambda_n$ has order $n$. Consequently, every element of $\Lambda_n$ must be an $n$-th root of unity. This implies that
\[
\Lambda_n = \left\{1, \alpha, \ldots, \alpha^{n-1}\right\},
\]
and hence
\[
\{\lambda_1,\lambda_2,\ldots,\lambda_n\} = \{\lambda_1,\alpha\lambda_1, \ldots,\alpha^{n-1}\lambda_1\}.
\]
By part (2) of Corollary \ref{coro1}, we know that $e^{\frac{2\pi i}{n}}z \in D(\clq_\theta)$, which implies $\langle e^{\frac{2\pi i}{n}}z \rangle\subseteq  D(\clq_\theta)$ (recall that $D(\clq_\theta)$ is semigroup). The reverse inclusion, $D(\clq_\theta)\subseteq \langle e^{\frac{2\pi i}{n}}z \rangle$, is due to Corollary \ref{coro2}. This proves that $\langle e^{\frac{2\pi i}{n}}z \rangle = D(\clq_\theta)$. For the necessary part, assume that $D(\clq_\theta)=\langle \alpha z \rangle$. As $\alpha z\in D(\clq_\theta)$, by part (2) of Corollary \ref{coro1}, we have
\[
\{\alpha\lambda_1, \ldots, \alpha^{n-1}\lambda_1\} \subseteq \clz(\theta),
\]
and the multiplicities of all these zeros are the same as that of $\lambda_1$. Consequently, $\Lambda_n = \left\{1, \alpha, \ldots, \alpha^{n-1}\right\}$, which completes the proof of the theorem.
\end{proof}

We know from Corollary \ref{coro3} that $D(\clq_\theta)=\langle e^{\frac{2\pi i}{d}}z \rangle$ for some divisor $d$ of $n$. This leads us to the following question: Given a finite Blaschke product $\theta$ not vanishing at the origin and a divisor $d$ of $n:= \# \clz(\theta)$, when does
\[
D(\clq_\theta)=\langle e^{\frac{2\pi i}{d}}z\rangle?
\]
The following result provides an answer. Here, we make use of the prime factorization of $n = \# \clz(\theta)$.

\begin{theorem}
Let $\theta$ be a finite Blaschke product and let $d$ be a divisor of $n:= \# \clz(\theta)$. Suppose $\theta(0)\neq 0$ and let
\[
n = p_1^{k_1} p_2^{k_2} \cdots p_m^{k_m},
\]
be the prime factorization of $n$. Then
\[
D(\clq_\theta)=\langle e^{\frac{2\pi i}{d}}z\rangle,
\]
if and only if
\[
mult_\theta (\lambda)= mult_{\theta}\left(e^{\frac{2\pi i}{d}} \lambda\right),
\]
for all $\lambda \in \clz(\theta)$, and for each $j \in \{1, \ldots, m\}$, there exist $\lambda_j \in \clz(\theta)$ such that
\[
mult_\theta (\lambda_j)\neq mult_{\theta}\left(e^{\frac{2\pi i}{dp_j}} \lambda_j\right).
\]
\end{theorem}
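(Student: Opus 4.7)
The plan is to combine Corollary \ref{coro3} (which tells us $D(\clq_\theta) = \langle e^{2\pi i/d'} z\rangle$ for some divisor $d'$ of $n$) with Corollary \ref{coro1} (which characterizes membership of a rotation in $D(\clq_\theta)$ via a multiplicity condition on $\clz(\theta)$). The task then reduces to pinning down the divisor $d'$ precisely as $d$, and the prime-factorization hypothesis is tailored exactly for this identification.

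For the forward direction, I would assume $D(\clq_\theta) = \langle e^{2\pi i/d} z\rangle$. Since $e^{2\pi i/d} z$ lies in this group, part (2) of Corollary \ref{coro1} immediately yields the multiplicity equality in condition (a). For condition (b), I would argue by contradiction: if some prime $p_j$ fails the required inequality for every $\lambda \in \clz(\theta)$, then $mult_\theta(\lambda) = mult_\theta(e^{2\pi i/(dp_j)} \lambda)$ for all $\lambda$, so Corollary \ref{coro1}(2) forces $e^{2\pi i/(dp_j)} z \in D(\clq_\theta) = \langle e^{2\pi i/d} z\rangle$. But a primitive $(dp_j)$-th root of unity cannot lie in the cyclic group generated by a primitive $d$-th root of unity (as $dp_j \nmid d$), a contradiction.

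For the converse, I would begin by noting that condition (a) together with Corollary \ref{coro1}(2) gives $e^{2\pi i/d} z \in D(\clq_\theta)$, hence $\langle e^{2\pi i/d} z\rangle \subseteq D(\clq_\theta)$. By Corollary \ref{coro3}, $D(\clq_\theta) = \langle e^{2\pi i/d'} z\rangle$ for some divisor $d'$ of $n$, and the inclusion of cyclic groups forces $d \mid d'$. The key step is to rule out $d < d'$ using condition (b). Suppose $d < d'$ and pick any prime $q$ dividing $d'/d$; then $dq \mid d'$, and since $d' \mid n$, the prime $q$ equals some $p_j$. Writing $e^{2\pi i/(dp_j)} = e^{2\pi i (d'/(dp_j))/d'}$ with $d'/(dp_j) \in \mathbb{N}$ shows $e^{2\pi i/(dp_j)} z \in \langle e^{2\pi i/d'} z\rangle = D(\clq_\theta)$. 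Corollary \ref{coro1}(2) then gives $mult_\theta(\lambda) = mult_\theta(e^{2\pi i/(dp_j)} \lambda)$ for every $\lambda \in \clz(\theta)$, contradicting condition (b) for this $p_j$. Hence $d' = d$, completing the proof.

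The main obstacle will be executing the divisibility bookkeeping cleanly: specifically, verifying that any prime factor $q$ of $d'/d$ must be one of the primes $p_j$ listed in the factorization of $n$ (which works because $d' \mid n$), and confirming that $dq \mid d'$ translates to the inclusion $e^{2\pi i/(dq)} z \in \langle e^{2\pi i/d'} z\rangle$ at the level of roots of unity. Everything else is a direct invocation of the already-established Corollaries \ref{coro1} and \ref{coro3}.
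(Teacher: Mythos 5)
Your proposal is correct and follows essentially the same route as the paper's proof: the forward direction reads off both conditions from Corollary \ref{coro1} together with the fact that a primitive $(dp_j)$-th root of unity has order exceeding $d$, and the converse combines Corollary \ref{coro3} with condition (b) to exclude any strictly larger divisor $d'$ with $d \mid d'$. The only cosmetic difference is that the paper derives $d \mid \ell$ by an explicit computation with $\tfrac{1}{d}-\tfrac{k}{\ell}\in\mathbb{Z}$, whereas you invoke the containment of cyclic groups directly; both are valid.
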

\begin{proof}
Suppose $D(\clq_\theta)=\langle e^{\frac{2\pi i}{d}}z\rangle$. Then $e^{\frac{2\pi i}{d}}z \in D(\clq_\theta)$ and $e^{\frac{2\pi i}{d p_j}}z\not \in D(\clq_\theta)$ for all $j=1,\ldots, m$. The necessary part then follows from Corollary \ref{coro1}. For the reverse direction, assume that $mult_\theta (\lambda)= mult_{\theta}(e^{\frac{2\pi i}{d}} \lambda)$ for all $\lambda \in \clz(\theta)$. Part (2) of Corollary \ref{coro1} implies that
\[
e^{\frac{2\pi i}{d}}z \in D(\clq_\theta),
\]
and hence $\langle e^{\frac{2\pi i}{d}}z\rangle \subseteq D(\clq_\theta)$. On the other hand, by Theorem \ref{coro3}, there exists a divisor $\ell$ of $n$ such that
\[
D(\clq_\theta)=\langle e^{\frac{2\pi i}{\ell}}z\rangle.
\]
Suppose $\ell \neq d$. Since $ e^{\frac{2\pi i}{d}}z \in \langle e^{\frac{2\pi i}{\ell}}z\rangle$, there exist $k \in \{1, \ldots, \ell\}$ such that
\[
e^{\frac{2\pi i}{d}}= e^{\frac{2\pi i k}{\ell}},
\]
which implies
\[
\frac{1}{d}-\frac{k}{\ell} \in \mathbb{Z}.
\]
As $d\geq 1$ and $\frac{k}{\ell} \in (0,1]$, it follows that
\[
\frac{1}{d}-\frac{k}{\ell} \in (-1, 1),
\]
which yields
\[
\frac{1}{d}-\frac{k}{\ell}=0,
\]
equivalently, $\ell= kd$. In particular, $d$ is a divisor of $\ell$. Since $\ell \neq d$, it follows that
\[
dp_j | \ell,
\]
for some $j=1,\ldots,m$. Then
\[
e^{\frac{2\pi i}{d p_j}}z \in \langle e^{\frac{2\pi i}{\ell}}z\rangle=D(\clq_\theta),
\]
and hence, by part (2) of Corollary \ref{coro1}, we have $mult_\theta (\lambda)= mult_{\theta}\left(e^{\frac{2\pi i}{dp_j}} \lambda\right)$ for all $\lambda \in \clz(\theta)$. This contradicts to the other condition that for each $j \in \{1, \ldots, m\}$, there exist $\lambda_j \in \clz(\theta)$ such that
\[
mult_\theta (\lambda_j)\neq mult_{\theta}\left(e^{\frac{2\pi i}{dp_j}} \lambda_j\right).
\]
Therefore, we conclude that $\ell=d$, that is, $D(\clq_\theta)=\langle e^{\frac{2\pi i}{d}}z\rangle$. This completes the proof of the theorem.
\end{proof}

A natural question that arises is, what would be the correct analogue of the statement of the above theorem for general Blaschke products (and then for inner functions)? Where this analogy is unclear, one may consider an infinite cyclic group and ask whether there exists a meaningful connection between such a group and infinite Blaschke products (or inner functions). Some more concrete questions along these lines will be outlined at the end of the paper.

\section{On nontrivial groups}\label{sec: nontrivial}

In this section, we provide a complete solution to the question of the nontriviality of both $L(\clq_\theta)$ and $D(\clq_\theta)$. In a sense, the nontriviality of these spaces is closely related, as already observed in Proposition \ref{prop: D sub L}, where we have
\[
D(\clq_\theta)\subseteq L(\clq_\theta).
\]
We begin by addressing an equivalent formulation of the problem namely, determining when $D(\clq_\theta)$ is trivial. We proceed from simple cases to the most general ones. As we will see, the results for all cases, from particular to general, differ. Recall that given a set $\{\lambda_1, \ldots, \lambda_n\}$ of $n$ distinct points in $\mathbb{D} \setminus \{0\}$, we define
\[
\Lambda_n = \left\{\dfrac{\lambda_1}{\lambda_1}, \dfrac{\lambda_2}{\lambda_1}, \dots, \dfrac{\lambda_n}{\lambda_1}\right\},
\]
and regard it as a multiplicative group whenever we wish to view it as a group. The following result comes directly from existing results:

\begin{proposition}\label{thm5}
Let $n$ be a prime number, $\{\lambda_1, \ldots, \lambda_n\} \subseteq \mathbb{D} \setminus \{0\}$ be a set of $n$ distinct scalars, and let $\{m_i\}_{i=1}^n \subseteq \mathbb{N}$. If
\[
\theta = \prod_{i=1}^{n} b_{\lambda_i}^{m_i},
\]
then $D(\clq_\theta) = \{z\}$ if and only if either not all the $m_i$'s are equal or $\Lambda_n$ is not a group.
\end{proposition}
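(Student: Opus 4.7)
The plan is to reduce the claim to a direct combination of Corollary \ref{coro3} and Theorem \ref{thm4}, with the primality of $n$ providing the essential simplification. Since $\#\clz(\theta) = n$ by hypothesis, Corollary \ref{coro3} tells us that $D(\clq_\theta) = \langle e^{2\pi i/d}z\rangle$ for some divisor $d$ of $n$. The key observation is that when $n$ is prime, the only divisors of $n$ are $1$ and $n$ itself, so there are only two possibilities:
\[
D(\clq_\theta) = \{z\} \quad \text{or} \quad D(\clq_\theta) = \langle e^{2\pi i/n}z\rangle.
\]

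Given this dichotomy, the statement $D(\clq_\theta) = \{z\}$ is logically equivalent to the statement $D(\clq_\theta) \neq \langle e^{2\pi i/n}z\rangle$. At this point I would invoke Theorem \ref{thm4}, which provides a sharp characterization: $D(\clq_\theta) = \langle e^{2\pi i/n}z\rangle$ holds if and only if $\Lambda_n$ is a multiplicative group and $m_1 = m_2 = \cdots = m_n$. Negating this characterization, $D(\clq_\theta) \neq \langle e^{2\pi i/n}z\rangle$ is equivalent to the disjunction that either $\Lambda_n$ fails to be a multiplicative group or not all the $m_i$'s are equal. Combining the two equivalences gives exactly the claim of the proposition.

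There is essentially no main obstacle here, as the whole argument is a clean two-line deduction from previously established results; the role of the prime hypothesis is simply to collapse the lattice of subgroups of $\langle e^{2\pi i/n}z\rangle$ down to the two extreme cases. If one wished to be slightly more careful in the writeup, the only subtlety worth flagging is that Theorem \ref{thm4} as stated requires all the $\lambda_i$'s to be nonzero and distinct, both of which are explicitly part of our hypotheses, so the theorem applies verbatim. No further input from the structure of $\theta$ is needed beyond these two prior results.
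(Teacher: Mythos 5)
Your proposal is correct and follows exactly the paper's own argument: use Corollary \ref{coro3} and the primality of $n$ to reduce to the dichotomy $D(\clq_\theta)=\{z\}$ or $D(\clq_\theta)=\langle e^{2\pi i/n}z\rangle$, then negate the characterization in Theorem \ref{thm4}. No gaps; the approach is essentially identical to the paper's proof.
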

\begin{proof}
Since $n$ is prime, Corollary \ref{coro3} implies that $D(\clq_\theta)$ is either $\{z\}$ or $\langle e^{\frac{2\pi i}{n}}z\rangle$. The equivalent formulation of the proposition now follows directly from Theorem \ref{thm4}.
\end{proof}

Note that $\Lambda_2 = \left\{\lambda_1/\lambda_1, \lambda_2/\lambda_1 \right\}$ is a multiplicative group
if and only if
\[
\lambda_1+\lambda_2=0.
\]
Therefore, we have the following easy consequence: Let $\{\lambda_1,\lambda_2\}$ be a pair of distinct points in $\mathbb{D} \setminus \{0\}$, and let $m_1, m_2 \in \mathbb{N}$. Set
\[
\theta = b_{\lambda_1}^{m_1} b_{\lambda_2}^{m_2}.
\]
Then we have the following:
\begin{enumerate}
\item $D(\clq_\theta)=\{z\}$ if and only if $m_1\neq m_2$ or $\lambda_1+\lambda_2\neq 0$.
\item $D(\clq_\theta)=\{z, -z\}$ if and only if $m_1= m_2$ and $\lambda_1+\lambda_2=0$.
\end{enumerate}

Proposition \ref{thm5} addresses the triviality of $D(\clq_\theta)$ in the case where $n = \# \clz(\theta)$ is a prime number. We now turn to the case of general natural numbers. Our eventual goal is to apply the prime factorization of $n$, and as a first step, we consider the case where $n$ is a prime power:

\begin{proposition}\label{thm6}
Let $\theta$ be a finite Blaschke product. Assume that $\theta(0)\neq 0$ and $\# \clz(\theta) = p^k$ for some prime $p$ and $k \in \mathbb{N}$. Then $D(\clq_\theta)=\{z\}$ if and only if there exists $\lambda \in \clz(\theta)$ such that
\[
mult_\theta (\lambda)\neq mult_{\theta}\left(e^{\frac{2\pi i}{p}} \lambda\right).
\]
\end{proposition}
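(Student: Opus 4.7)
The plan is to combine the structural result Corollary \ref{coro3}, which says $D(\clq_\theta)=\langle e^{\frac{2\pi i}{d}}z\rangle$ for some divisor $d$ of $n=p^k$, with the multiplicity criterion in Corollary \ref{coro1}. The crucial arithmetic feature of a prime power is that its divisors form a totally ordered chain $1\mid p\mid p^2\mid\cdots\mid p^k$, so every nontrivial divisor of $p^k$ is automatically divisible by $p$. This reduces the whole question to deciding whether the single rotation $e^{\frac{2\pi i}{p}}z$ belongs to $D(\clq_\theta)$.

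First I would handle the forward direction. Assuming $D(\clq_\theta)=\{z\}$, I get $e^{\frac{2\pi i}{p}}z\notin D(\clq_\theta)$ for free. Applying part (3) of Corollary \ref{coro1} (with $\alpha=e^{\frac{2\pi i}{p}}$) then yields a $\lambda\in\clz(\theta)$ with $mult_\theta(\lambda)\neq mult_\theta(e^{\frac{2\pi i}{p}}\lambda)$, which is exactly the desired conclusion.

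For the converse, I would argue by contrapositive. Suppose $D(\clq_\theta)\neq\{z\}$. By Corollary \ref{coro3}, $D(\clq_\theta)=\langle e^{\frac{2\pi i}{d}}z\rangle$ for some divisor $d$ of $p^k$, and nontriviality forces $d>1$, so $d=p^j$ for some $1\leq j\leq k$. Setting $t=p^{j-1}=d/p$, the element
\[
\underbrace{\bigl(e^{\frac{2\pi i}{d}}z\bigr)\circ\cdots\circ\bigl(e^{\frac{2\pi i}{d}}z\bigr)}_{t\text{ times}}=e^{\frac{2\pi i t}{d}}z=e^{\frac{2\pi i}{p}}z
\]
lies in the cyclic group $D(\clq_\theta)$. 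Then part (2) of Corollary \ref{coro1} gives $mult_\theta(\lambda)=mult_\theta(e^{\frac{2\pi i}{p}}\lambda)$ for \emph{every} $\lambda\in\clz(\theta)$, contradicting the hypothesis.

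I do not expect any real obstacle here: once one recognizes that the divisor lattice of $p^k$ is a chain and that membership of any generator of a subgroup of $\langle e^{\frac{2\pi i}{p^k}}z\rangle$ forces membership of $e^{\frac{2\pi i}{p}}z$, the argument reduces to two one-line applications of Corollary \ref{coro1}. The only minor point to be careful about is the bookkeeping on exponents, namely verifying $t=d/p\in\{0,1,\ldots,d-1\}$ so that $e^{\frac{2\pi i t}{d}}z$ is indeed one of the listed elements of $\langle e^{\frac{2\pi i}{d}}z\rangle$; this is immediate from $d=p^j$ with $j\geq 1$.
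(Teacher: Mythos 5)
Your argument is correct and follows essentially the same route as the paper: both reduce the question to whether $e^{\frac{2\pi i}{p}}z$ lies in $D(\clq_\theta)$, using the fact that every nontrivial subgroup of $\langle e^{\frac{2\pi i}{p^k}}z\rangle$ contains $e^{\frac{2\pi i}{p}}z$ (which you verify explicitly via the divisor chain of $p^k$ and Corollary \ref{coro3}, where the paper cites Lemma \ref{thm3} directly), and then both apply Corollary \ref{coro1} to translate membership into the multiplicity condition. The exponent bookkeeping you flag is fine.
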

\begin{proof}
By Theorem \ref{thm3}, we know that $D(\clq_\theta)$ is a subgroup of $\langle e^{\frac{2\pi i}{p^k}} z\rangle$. Since every nontrivial subgroup of $\langle e^{\frac{2\pi i}{p^k}} z\rangle$ contains $e^{\frac{2\pi i}{p}}z$, it follows that $D(\clq_\theta)=\{z\}$ if and only if $e^{\frac{2\pi i}{p}}z \not \in D(\clq_\theta)$. However, by part (3) of Corollary \ref{coro1}, the latter occurs if and only if there exists $\lambda \in \clz(\theta)$ such that $mult_\theta (\lambda)\neq mult_{\theta}(e^{\frac{2\pi i}{p}} \lambda)$.
\end{proof}

We now consider the general case of $n = \# \clz(\theta)$:

\begin{theorem}\label{thm7}
Let $\theta$ be a finite Blaschke product. Assume that $\theta(0)\neq 0$. Consider the prime factorization of $n := \# \clz(\theta)$ as
\[
n=p_1^{k_1} \cdots p_m^{k_m}.
\]
Then $D(\clq_\theta)=\{z\}$ if and only if for each $j \in \{1, \ldots, m\}$, there exists $\lambda_j \in \clz(\theta)$ such that
\[
mult_\theta (\lambda_j)\neq mult_{\theta}\left(e^{\frac{2\pi i}{p_j}} \lambda_j\right).
\]
\end{theorem}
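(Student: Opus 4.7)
The plan is to reduce the theorem to an elementary statement about subgroups of a finite cyclic group, then invoke Corollary~\ref{coro1}(3) to translate group membership into the stated multiplicity condition. By Corollary~\ref{coro3}, there exists a divisor $d$ of $n$ such that $D(\clq_\theta) = \langle e^{2\pi i/d} z\rangle$, so the conclusion $D(\clq_\theta) = \{z\}$ is equivalent to $d = 1$.

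The first substantive step is the elementary group-theoretic observation that, for a divisor $d$ of $n = p_1^{k_1}\cdots p_m^{k_m}$, one has $d = 1$ if and only if $p_j \nmid d$ for every $j \in \{1,\ldots,m\}$, and in turn $p_j \mid d$ if and only if $e^{2\pi i/p_j} z \in \langle e^{2\pi i/d} z\rangle$. The forward direction of the second equivalence is immediate: writing $d = p_j q$ gives $e^{2\pi i/p_j} = e^{2\pi i q/d} \in \langle e^{2\pi i/d}\rangle$. The reverse follows because $e^{2\pi i/p_j} = e^{2\pi i k/d}$ for some integer $k$ forces $d/p_j \in \mathbb{Z}$. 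Combining these remarks, $D(\clq_\theta) = \{z\}$ is equivalent to the assertion that $e^{2\pi i/p_j} z \notin D(\clq_\theta)$ for every $j \in \{1,\ldots,m\}$.

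At this stage Corollary~\ref{coro1}(3) finishes the proof: it says exactly that $e^{2\pi i/p_j} z \notin D(\clq_\theta)$ if and only if there exists $\lambda_j \in \clz(\theta)$ with $mult_\theta(\lambda_j) \neq mult_\theta(e^{2\pi i/p_j} \lambda_j)$. Quantifying this equivalence over $j \in \{1,\ldots,m\}$ yields precisely the characterization in the theorem. There is no genuine obstacle here, since the analytic content is already packaged in Theorem~\ref{thm1}, Corollary~\ref{coro1}, and Corollary~\ref{coro3}; the only new input is the cyclic-group bookkeeping above. The degenerate case $n=1$ is consistent with the statement, as the product over $j$ is empty (vacuous condition on the right) and Proposition~\ref{single} gives $D(\clq_\theta) = \{z\}$ on the left.
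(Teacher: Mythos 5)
Your proposal is correct and follows essentially the same route as the paper: both reduce to the cyclic-group structure $D(\clq_\theta)=\langle e^{2\pi i/d}z\rangle$ from Corollary~\ref{coro3}, observe that $d>1$ forces some prime $p_j\mid d$ and hence $e^{2\pi i/p_j}z\in D(\clq_\theta)$, and then translate membership into the multiplicity condition via Corollary~\ref{coro1}(3) and Theorem~\ref{thm1}. The only cosmetic difference is that you organize the argument as a chain of equivalences while the paper runs the converse by contradiction; the mathematical content is identical.
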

\begin{proof}
Suppose $D(\clq_\theta)=\{z\}$ and fix $j \in \{1, \ldots, m\}$. Since $e^{\frac{2\pi i}{p_j}}z \not \in D(\clq_\theta)$, part (3) of Corollary \ref{coro1} implies
\[
mult_\theta (\lambda_j)\neq mult_{\theta}\left(e^{\frac{2\pi i}{p_j}} \lambda_j\right),
\]
for some $\lambda_j \in \clz(\theta)$. For the converse, we proceed by contradiction. Suppose, for the sake of argument, that $D(\clq_\theta) \neq \{z\}$. By Theorem \ref{coro3}, there exists a divisor $d ( > 1)$ of $n$ such that
\[
D(\clq_\theta)=\langle e^{\frac{2\pi i}{d}}z \rangle.
\]
Thus, there exists a prime factor $p_j$ of $n$ such that $p_j |d$; that is, $d=p_j\ell$ for some $\ell \in \mathbb{N}$. Hence
\[
e^{\frac{2\pi i}{p_j}}z=\left( e^{\frac{2\pi i}{d}}\right)^\ell z \in \langle e^{\frac{2\pi i}{d}}z\rangle= D(\clq_\theta),
\]
that is, $e^{\frac{2\pi i}{p_j}}z \in  D(\clq_\theta)$ for some $j \in\{1, \ldots, m\}$. By part (2) of Corollary \ref{coro1}, it then follows that $mult_\theta (\lambda)=mult_{\theta}(e^{\frac{2\pi i}{p_j}} \lambda)$ for all $\lambda \in \clz(\theta)$, which leads to a contradiction.
\end{proof}

In the setting of the above result, we obtain the following statement by contrapositive, which serves as an answer to the nontriviality question for $D(\clq_\theta)$:
\[
D(\clq_\theta)\neq\{z\},
\]
if and only if there exist a prime factor $p_j$ of $n$ such that
\[
mult_\theta (\lambda)= mult_{\theta}\left(e^{\frac{2\pi i}{p_j}} \lambda\right),
\]
for all $\lambda \in \clz(\theta)$. Moreover, in this case, we have
\[
\langle e^{\frac{2\pi i}{p_j}}z \rangle \subseteq D(\clq_\theta).
\]

\begin{theorem}\label{thm - triv L}
Let $\theta$ be a finite Blaschke product. Then $L(\clq_\theta)=\{z\}$ if and only if the following conditions hold:
\begin{enumerate}
\item $\theta(0)\neq 0$.
\item $\#\clz(\theta) \geq 2$.
\item For every pair of constants $(a,b)$ other than $(1,0)$, there exists $\lambda \in \clz(\theta)$ such that
\[
mult_\theta (\lambda)\neq mult_{\theta}\left(\frac{\bar{a} \lambda}{1-\bar{b} \lambda}\right).
\]
\end{enumerate}
\end{theorem}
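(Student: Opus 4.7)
The plan is a bidirectional argument that rests on Lemma \ref{lem: Javad}, Proposition \ref{single}, and one short Hardy-space inner-product computation.

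For the converse direction, assume (1), (2), (3) hold and take $\vp \in L(\clq_\theta)$. Conditions (1) and (2) are exactly the hypotheses of Lemma \ref{lem: Javad}, which forces $\vp(z)=az+b$ with $a\neq 0$ and $mult_\theta(\lambda)=mult_\theta\!\left(\tfrac{\bar{a}\lambda}{1-\bar{b}\lambda}\right)$ for every $\lambda\in\clz(\theta)$. If $\vp\neq z$, then $az+b$ is a non-identity non-constant affine map, and condition (3) produces some $\lambda$ violating the multiplicity equality. Hence $\vp=z$, giving $L(\clq_\theta)=\{z\}$.

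For the forward direction, assume $L(\clq_\theta)=\{z\}$ and verify (1), (2), (3) one at a time by contrapositive. Suppose first that $\theta(0)=0$. Then for every $h\in H^2$,
\[
\langle 1,\theta h\rangle_{H^2}=\overline{(\theta h)(0)}=0,
\]
so $\mathbf{1}\in\clq_\theta$ and consequently every constant function lies in $\clq_\theta$. The constant map $\vp\equiv 0$ is a rational self-map of $\mathbb{C}_\infty$ with $\vp\not\equiv\infty$, and $C_\vp f=f(0)\in\mathbb{C}\subseteq\clq_\theta$ for every $f\in\clq_\theta$; thus the constant symbol $0$ lies in $L(\clq_\theta)$, contradicting triviality. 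Hence (1) holds. Next, assuming (1), if $\#\clz(\theta)=1$, then $\theta=b_\lambda^n$ for some $\lambda\in\D\setminus\{0\}$, and Proposition \ref{single} supplies the uncountable family $\{(1-\bar{\lambda}a)z+a:a\neq 1/\bar{\lambda}\}\subseteq L(\clq_\theta)$, again a contradiction. Hence (2) holds. Finally, with (1) and (2) in force, any failure of (3) produces a non-identity non-constant affine map satisfying the multiplicity equalities, which by Lemma \ref{lem: Javad} lies in $L(\clq_\theta)$ — another contradiction. Hence (3) holds.

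No step is genuinely difficult, since Lemma \ref{lem: Javad} has already carried out the reduction of $L(\clq_\theta)$-symbols to affine maps. The only mildly delicate point is the failure case of (1): the short orthogonality computation showing $\mathbf{1}\in\clq_\theta$ (and consequently that every constant symbol belongs to $L(\clq_\theta)$) whenever $\theta$ vanishes at the origin. With this observation in hand, the remaining two reductions are immediate consequences of results already established in the paper.
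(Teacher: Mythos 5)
Your proposal is correct and follows essentially the same route as the paper: the forward direction rules out $\theta(0)=0$ via the constant functions lying in $\clq_\theta$, rules out a singleton zero set via Proposition \ref{single}, and handles condition (3) via Lemma \ref{lem: Javad}, while the converse is the other direction of that same lemma. Your write-up is merely more explicit than the paper's (which compresses all of this into a few lines), in particular in spelling out why a constant symbol lies in $L(\clq_\theta)$ when $\theta$ vanishes at the origin.
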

\begin{proof}
Suppose $L(\clq_\theta)=\{z\}$. If $\theta(0)=0$, then $\clq_\theta$ contains all constant functions and consequently, $L(\clq_\theta) \supseteq \mathbb{C}$ (recall that here $\mathbb{C}$ refers to the set of all constant functions), which is uncountable and hence not possible. Therefore, $\theta(0)\neq 0$. If $\theta$ has only one zero, then by Proposition \ref{single}, $L(\clq_\theta)$ is again uncountable. Hence, $\#\clz(\theta) \geq 2$. Part (3) follows from Lemma \ref{lem: Javad}. The converse also follows easily from Lemma \ref{lem: Javad}.
\end{proof}

In particular, we have: Let $\theta$ be a finite Blaschke product. Then $L(\clq_\theta)$ is nontrivial if and only if any one of the following holds:
\begin{enumerate}
\item $\theta(0)=0$.
\item There exist $n \in \mathbb{N}$ and a nonzero $\lambda \in \D$ such that $\theta = b_\lambda^n$.
\item $\theta(0)\neq 0$, $\#\clz(\theta) \geq 2$, and there exists a pair of constants $(a,b)$ other than $(1,0)$ such that for all $\lambda \in \clz(\theta)$, we have
\[
mult_\theta (\lambda)= mult_{\theta}\left(\frac{\bar{a} \lambda}{1-\bar{b} \lambda}\right).
\]
\end{enumerate}

We conclude this section with an illustration of Theorem \ref{thm7}.

\begin{example}
Let $n \geq 2$ be a natural number. Fix a finite Blaschke product $\theta$ such that
\[
\# \clz(\theta)=n,
\]
such that
\[
\theta\Big(\frac{3}{4}\Big)=0,
\]
and that its remaining $n-1$ zeros lie on the circle $|z|= \frac{1}{2}$, with arbitrary multiplicities. For any prime divisor $p$ of $n$, by the definition of $\theta$, we have
\[
\frac{3}{4}e^{\frac{2\pi i}{p}}\notin \clz(\theta),
\]
which implies that $mult_{\theta}\left(\frac{3}{4} e^{\frac{2\pi i}{p}}\right)=0$. Hence,
\[
0=mult_{\theta}\left(\frac{3}{4} e^{\frac{2\pi i}{p}}\right)\neq mult_\theta \left(\frac{3}{4}\right) \geq 1,
\]
for all prime divisors $p$ of $n$. Therefore, by Theorem \ref{thm7}, we conclude that $D(\clq_\theta)=\{z\}$.
\end{example}

\section{Blaschke products vanishing at $0$ and $\# \clz(\theta) =2$}\label{sec: not vanish}

Here, we consider model spaces corresponding to finite Blaschke products that vanish at the origin. In this setting, the results often differ from those in the case where the Blaschke product does not vanish at the origin. In this section, we will specifically focus on Blaschke products $\theta$ whose zero set contains exactly two elements. More specifically, we consider the case
\[
\theta = z^m b_{\lambda}^n,
\]
where $\lambda$ is a nonzero element of $\mathbb{D}$ and $m, n \in \mathbb{N}$. Thus, in this case, $\theta$ such that $\theta(0) = 0$ and $\# (\clz(\theta)\setminus \{0\}) =1$. The remaining case, that is,  $\theta(0) = 0$ and $\# (\clz(\theta)\setminus \{0\}) \geq 2$, will be treated in the following section. In what follows, we will observe that the sets $D(\clq_\theta)$ and $L(\clq_\theta)$ may contain constant functions. Keeping this in mind, throughout the paper, for any subset $X \subseteq \mathbb{C}$, when we write
\[
X \subseteq L(\clq_\theta),
\]
we mean that $L(\clq_\theta)$ contains constant functions that assume values in $X$. The same convention will be used for $D(\clq_\theta)$. For simplicity of notation, we write
\[
L(\clq_\theta)^* = L(\clq_\theta) \setminus \mathbb{C} \text{ and } D(\clq_\theta)^* = D(\clq_\theta) \setminus \mathbb{D}.
\]
The following is the $\alpha = 0$ case of \cite[Theorem 3.1]{Mut}, along with its direct consequence:

\begin{proposition}\label{prop: z*m}
Let  $\theta(z) =z^m$, $m\geq 2$. Then
\begin{enumerate}
\item $L(\clq_\theta)= \{az+b: a, b \in \mathbb{C}, a\neq 0\}\cup \mathbb{C}$.
\item $D(\clq_\theta)=\{az+b: a, b \in \mathbb{C}, a\neq 0, |a|+|b|\leq 1 \}\cup \D$.
\item $L(\clq_\theta)^*= \{az+b: a, b \in \mathbb{C}, a\neq 0\}$ is a non-cyclic group.
\item $D(\clq_\theta)^*$ is not a group.
\end{enumerate}
\end{proposition}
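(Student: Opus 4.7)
My plan is to leverage the fact that $\clq_\theta = \mathrm{span}\{1, z, \ldots, z^{m-1}\}$ when $\theta(z) = z^m$, and to handle parts (1) and (2) as the $\alpha = 0$ specialization of \cite[Theorem 3.1]{Mut}, then deduce (3) and (4) as direct consequences.

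For part (1), I would show that any non-constant $\varphi \in L(\clq_\theta)$ must be affine by applying $C_\varphi$ to the basis vectors $z$ and $z^{m-1}$: the image $C_\varphi(z) = \varphi$ must be a polynomial of degree at most $m-1$, and then $\varphi^{m-1} = C_\varphi(z^{m-1})$ being of degree at most $m-1$ forces $(m-1)\deg(\varphi) \leq m-1$, i.e.\ $\deg(\varphi) \leq 1$. The converse is immediate since affine substitution preserves the space of polynomials of bounded degree, and constant symbols clearly lie in $L(\clq_\theta)$. For part (2), I would intersect the description from (1) with $\cls(\D)$: a constant symbol lies in $\cls(\D)$ precisely when it lies in $\D$, while for an affine $az+b$ with $a \neq 0$ the self-map condition on $\D$ reduces, via the triangle inequality on $|az+b| \leq |a||z| + |b|$, to $|a|+|b| \leq 1$.

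For part (3), the set $\{az + b : a \neq 0\}$ is the standard affine group of $\mathbb{C}$ under composition: the multiplication rule $(a_1 z + b_1)\circ(a_2 z + b_2) = a_1 a_2 z + (a_1 b_2 + b_1)$ makes closure transparent, the identity is $z$, and the inverse of $az+b$ is $(1/a)z - b/a$. Cyclicity is refuted by exhibiting a non-commuting pair: taking $\varphi_1(z) = 2z$ and $\varphi_2(z) = z + 1$ gives $\varphi_1 \circ \varphi_2 = 2z+2$ but $\varphi_2 \circ \varphi_1 = 2z+1$, so the group is non-abelian and hence non-cyclic.

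For part (4), I would observe that while the constraint $|a|+|b| \leq 1$ is preserved under composition (a one-line triangle-inequality check shows $|a_1 a_2|+|a_1 b_2 + b_1| \leq |a_1|(|a_2|+|b_2|)+|b_1| \leq 1$, so $D(\clq_\theta)^*$ is at least a semigroup), it is not preserved under compositional inversion. An explicit witness suffices: $\varphi(z) = \tfrac{1}{2}z$ lies in $D(\clq_\theta)^*$, whereas its compositional inverse $2z$ has $|a|+|b| = 2 > 1$ and hence lies outside $D(\clq_\theta)^*$. No real obstacle arises in this proposition; the only care needed is bookkeeping the distinction between the affine part of $D(\clq_\theta)$ and the constant-function part $\D$ that is excluded when passing to $D(\clq_\theta)^*$.
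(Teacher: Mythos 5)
Your proposal is correct. The paper itself gives almost no argument here: it obtains parts (1) and (2) by citing the $\alpha=0$ case of Theorem 3.1 of \cite{Mut}, treats (3) as immediate, and for (4) records exactly the witness you use, namely that $\frac{z}{2}$ lies in $D(\clq_\theta)^*$ but is not invertible under composition there. What you do differently is supply a self-contained proof of (1): since $z\in\clq_{z^m}=\mathrm{span}\{1,z,\ldots,z^{m-1}\}$ for $m\geq 2$, any $\varphi\in L(\clq_\theta)$ is a polynomial of degree at most $m-1$, and then $\varphi^{m-1}\in\clq_\theta$ forces $(m-1)\deg\varphi\leq m-1$, i.e.\ $\deg\varphi\leq 1$; this degree-counting argument is an elementary replacement for the citation and makes the proposition independent of \cite{Mut}. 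Your reduction of (2) to the containment $\overline{D(b,|a|)}\subseteq\overline{\D}$, i.e.\ $|a|+|b|\leq 1$, your non-commuting pair in (3) (a cyclic group being abelian), and your closure-under-composition check plus the $\frac{z}{2}$ counterexample in (4) are all sound and consistent with the paper's conventions (in particular with reading $\mathbb{C}$ and $\D$ as sets of constant symbols).
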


As for the proof of part (4) above, we simply note that $\frac{1}{2}z \in D(\clq_\theta)^*$, but it is not invertible under composition. Indeed $\frac{1}{2}z$ has an inverse under composition, namely $2z$; however, since $2z$ is not a self-map of the unit disk, it does not belong to $D(\clq_\theta)$).

Classifications of the elements of $L(\clq_\theta)$ for Blaschke products considered below were obtained in \cite[Theorem 2.5]{Jav}. Here, however, we present a different classification, which will be useful. Let $ \text{Mob}(\mathbb{C}_\infty)$ denote the group of all M\"{o}bius transformations of $\mathbb{C}_\infty$.

\begin{theorem}
Let $\lambda \in \mathbb{D}$ be a nonzero scalar, $n \in \mathbb{N}$. Let
\[
\theta = z b_{\lambda}^n.
\]
Then $\vp \in L(\clq_\theta)^*$ if and only if $\varphi \in \mathrm{Mob}(\mathbb{C}_\infty)$ and
\[
\vp\left(\frac{1}{\bar{\lambda}}\right) = \frac{1}{\bar{\lambda}}.
\]
\end{theorem}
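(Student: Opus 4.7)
The plan is to handle the two implications separately, using throughout the convenient basis
\[
\left\{1,\ \frac{1}{1-\bar{\lambda}z},\ \frac{1}{(1-\bar{\lambda}z)^2},\ \ldots,\ \frac{1}{(1-\bar{\lambda}z)^n}\right\}
\]
for $\clq_\theta$ obtained from \eqref{eqn: basis special} (together with the kernel at $0$). For the reverse direction, assume $\vp \in \mathrm{Mob}(\mathbb{C}_\infty)$ fixes $1/\bar{\lambda}$ and write $\vp(z) = (az+b)/(cz+d)$. A short computation shows that the numerator $(c-\bar{\lambda}a)z + (d-\bar{\lambda}b)$ of $1-\bar{\lambda}\vp(z)$ vanishes at $z = 1/\bar{\lambda}$, so $1-\bar{\lambda}\vp(z) = e(1-\bar{\lambda}z)/(cz+d)$ for some $e\neq 0$. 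Therefore, for each $j\in\{1,\ldots,n\}$,
\[
\frac{1}{(1-\bar{\lambda}\vp(z))^j} = \frac{(cz+d)^j}{e^j(1-\bar{\lambda}z)^j},
\]
and polynomial long division followed by partial fractions expresses this as a constant plus $\sum_{s=1}^j b_s(1-\bar{\lambda}z)^{-s}$, which lies in $\clq_\theta$. Combined with $C_\vp(1) = 1 \in \clq_\theta$, this yields $C_\vp\clq_\theta \subseteq \clq_\theta$.

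For the forward direction, assume $C_\vp\clq_\theta \subseteq \clq_\theta$ with $\vp$ nonconstant. By Proposition \ref{prop: D sub L}, $\vp$ is a rational function; write $\vp = p/q$ in lowest terms. Since $\frac{1}{1-\bar{\lambda}z} \in \clq_\theta$, its image $q/(q-\bar{\lambda}p)$ under $C_\vp$ must lie in $\clq_\theta$. Coprimality of $p$ and $q$, together with $\bar{\lambda}\neq 0$, yields $\gcd(q,\,q-\bar{\lambda}p)=1$, so the poles of this composition are exactly the zeros of $q-\bar{\lambda}p$. Since the only pole allowed in $\clq_\theta$ is at $z = 1/\bar{\lambda}$ (with total order at most $n$) and the function must be bounded at $\infty$, we deduce
\[
q(z) - \bar{\lambda}p(z) = c\,(1-\bar{\lambda}z)^k, \qquad \deg q \leq k,
\]
for some $c\neq 0$ and integer $k \geq 0$. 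If $k = 0$, then $\deg q = 0$ and $\bar{\lambda}p = q - c$ is a constant, forcing $\vp$ to be constant, which contradicts the hypothesis; so $k \geq 1$. Evaluating the identity at $z = 1/\bar{\lambda}$ then gives $q(1/\bar{\lambda}) = \bar{\lambda}p(1/\bar{\lambda})$, which by coprimality forces $q(1/\bar{\lambda}) \neq 0$.

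Applying the same analysis to the top kernel $1/(1-\bar{\lambda}z)^n \in \clq_\theta$, the composition equals $q^n/(c^n(1-\bar{\lambda}z)^{nk})$, whose pole at $1/\bar{\lambda}$ has order exactly $nk$ (using $q(1/\bar{\lambda})\neq 0$). This order must be at most $n$, so $k = 1$. Consequently $\deg q \leq 1$, and the identity $\bar{\lambda}p = q - c(1-\bar{\lambda}z)$ gives $\deg p \leq 1$, so $\vp = p/q \in \mathrm{Mob}(\mathbb{C}_\infty)$. Finally, evaluating $q-\bar{\lambda}p = c(1-\bar{\lambda}z)$ at $z = 1/\bar{\lambda}$ reads $\vp(1/\bar{\lambda}) = 1/\bar{\lambda}$. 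The main obstacle is the coordinated use of the first-order kernel $1/(1-\bar{\lambda}z)$ (which rigidifies the form of $q-\bar{\lambda}p$) together with the highest-order kernel $1/(1-\bar{\lambda}z)^n$ (which simultaneously pins down $k=1$ and $\deg q \leq 1$); either constraint alone would leave room for higher-degree rational $\vp$.
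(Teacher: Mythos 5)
Your proposal is correct and follows essentially the same route as the paper: both directions hinge on applying $C_\vp$ to the kernels $\frac{1}{1-\bar{\lambda}z}$ and $\frac{1}{(1-\bar{\lambda}z)^n}$ and using the fact that members of $\clq_\theta$ have a single pole at $\frac{1}{\bar{\lambda}}$ of order at most $n$ and are bounded at $\infty$, which forces $1-\bar{\lambda}\vp$ to vanish simply at $\frac{1}{\bar{\lambda}}$ and pins $\vp$ down as a M\"obius map fixing that point. Your bookkeeping via $\vp=p/q$ in lowest terms and the identity $q-\bar{\lambda}p=c(1-\bar{\lambda}z)^k$ is a slightly more explicit rendering of the paper's pole-order argument, but it is the same idea.
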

\begin{proof}
Suppose $\vp \in L(\clq_\theta)^*$. By the definition of $L(\mathcal{\clq}_\theta)$, we know that $\varphi$ is a rational function. On the other hand, since (recall the basis functions from \eqref{eqn: basis special}) $\dfrac{1}{(1-\bar{\lambda}z)^n}\in \clq_\theta$, there exist scalars $\{c_0, c_1, \ldots, c_n\}$ such that
\[
\dfrac{1}{(1-\bar{\lambda}z)^n}\circ \varphi=\dfrac{1}{(1-\bar{\lambda} \varphi)^n}=c_0+\sum_{k=1}^n
\dfrac{c_{k}}{(1-\bar{\lambda}z)^k}.
\]
As $\varphi$ is nonconstant, there exists $j\in \{1,\dots, n\}$ such that $c_j\neq 0$. The right-hand side of the above equation has exactly one pole at $z= 1/\bar{\lambda}$, of order at most $n$. This implies that $\dfrac{1}{1-\bar{\lambda} \varphi}$ has a simple pole at $z=1/\bar{\lambda}$. Therefore, there exist scalars $a$ and $b (\neq 0)$ such that
\[
\dfrac{1}{1-\bar{\lambda} \varphi}=a+\dfrac{b}{1-\bar{\lambda}z},
\]
which implies that $\varphi$ must be a M\"{o}bius transformation fixing the point $1/\bar{\lambda}$. Indeed, by substituting $z=1/\bar{\lambda}$ into the above identity, the right-hand side becomes infinite, and hence the left-hand side must also be infinite. This occurs only when $\varphi$ fixes  $1/\bar{\lambda}$. For the converse direction, let $\varphi \in \text{Mob}(\mathbb{C}_\infty)$ and assume that
\[
\varphi\Big(\dfrac{1}{\bar{\lambda}}\Big)=\dfrac{1}{\bar{\lambda}}.
\]
Then $1-\bar{\lambda}\varphi(z)$ has a single zero at $1/\bar{\lambda}$. There exist scalars $\alpha, \beta \in \mathbb{C}$ such that
\begin{equation*}
\dfrac{1}{1-\bar{\lambda} \varphi}=\dfrac{az+b}{1-\bar{\lambda}z}=\alpha+\dfrac{\beta}{1-\bar{\lambda}z}.
\end{equation*}
Thus
\begin{equation*}
\dfrac{1}{(1-\bar{\lambda} \varphi)^k} \in span\left\{ 1, \dfrac{1}{1-\bar{\lambda}z}, \dots, \dfrac{1}{(1-\bar{\lambda}z)^k}\right\} \subseteq \clq_\theta,
\end{equation*}
for all $k=1, \ldots, n$, and consequently $C_\varphi(\clq_\theta) \subseteq \clq_\theta$. This completes the proof of the theorem.
\end{proof}

Analogous result for $D(\clq_\theta)$ is also true with the same lines of proof. Thus, we give representations of $D(\clq_\theta)$ and $L(\clq_\theta)$ as follows:

\begin{corollary}\label{cor: z b lambda}
Let $\lambda \in \mathbb{D} \setminus \{0\}$ and let $n \in \mathbb{N}$. Let
\[
\theta = z b_{\lambda}^n.
\]
Then
\[
L(\clq_\theta)= \left\{\vp \in  \mathrm{Mob}(\mathbb{C}_\infty): \vp\left(\frac{1}{\bar{\lambda}}\right) = \frac{1}{\bar{\lambda}} \right\} \cup
\left(\mathbb{C} \setminus \left\{\frac{1}{\bar{\lambda}}\right\}\right),
\]
and
\[
D(\clq_\theta)= \left\{\vp \in \cls(\D) \cap  \mathrm{Mob}(\mathbb{C}_\infty): \vp\left(\frac{1}{\bar{\lambda}}\right) = \frac{1}{\bar{\lambda}} \right\} \cup \mathbb{D}.
\]
Moreover, in this case, $D(\clq_\theta)^* \cap  \mathrm{Aut}(\mathbb{D})$ is uncountable, and $L(\clq_\theta)^*$ is an infinite group that is not cyclic.
\end{corollary}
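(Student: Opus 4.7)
The nonconstant part of the description of $D(\clq_\theta)$ is immediate from the preceding theorem: a nonconstant $\vp \in \cls(\D)$ lies in $D(\clq_\theta)$ precisely when $\vp \in \mathrm{Mob}(\mathbb{C}_\infty)$ and $\vp(1/\bar{\lambda}) = 1/\bar{\lambda}$. The analogous statement for $L(\clq_\theta)$ follows from the same argument once one replaces the hypothesis ``$\vp \in \cls(\D)$'' by ``$\vp\colon \mathbb{C}_\infty \to \mathbb{C}_\infty$ is a nonconstant analytic map, not identically $\infty$''; the rationality of $\vp$, the pole analysis of $\frac{1}{(1-\bar{\lambda}\vp)^{n}}$, and the resulting identification of $\vp$ with a M\"obius transformation fixing $1/\bar{\lambda}$ all go through without modification, since those steps never use the self-map condition in an essential way.

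It remains to handle constant symbols. Because $\theta(0)=0$, the inner product $\ip{1}{\theta g} = \overline{\theta(0)}\, g(0) = 0$ for every $g \in H^2$, so $1 \in \clq_\theta$, and therefore every constant function lies in $\clq_\theta$. Consequently, for any constant $\vp \equiv c$, the image $C_\vp f = f(c)$ is itself a constant, hence automatically in $\clq_\theta$. Imposing $\vp \in \cls(\D)$ restricts $c$ to $\D$, while for $L(\clq_\theta)$ any $c \in \mathbb{C}$ is admissible. This accounts for the $\D$ and $\mathbb{C}$ summands, respectively, and completes the descriptions of $D(\clq_\theta)$ and $L(\clq_\theta)$.

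For the \emph{Moreover} assertion, I note that any $\vp \in \mathrm{Aut}(\D)$ extends to a M\"obius transformation of $\mathbb{C}_\infty$ whose two fixed points on $\mathbb{C}_\infty$ are symmetric across $\T$. Since $1/\bar{\lambda}$ is the reflection of $\lambda \in \D \setminus \{0\}$ across $\T$, the condition $\vp(1/\bar{\lambda}) = 1/\bar{\lambda}$ automatically forces $\vp(\lambda) = \lambda$; such $\vp$ are precisely the elliptic automorphisms with interior fixed point $\lambda$, forming the one-parameter family $\{\, b_\lambda^{-1}(e^{it} b_\lambda(z)) : t \in [0,2\pi)\,\}$, which is uncountable. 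For $L(\clq_\theta)^*$, a preliminary M\"obius conjugation sending $1/\bar{\lambda}$ to $\infty$ identifies the stabilizer subgroup of $1/\bar{\lambda}$ in $\mathrm{Mob}(\mathbb{C}_\infty)$ with the affine group $\{az+b : a \in \mathbb{C} \setminus \{0\},\ b \in \mathbb{C}\}$, which is uncountable and nonabelian, hence an infinite noncyclic group.

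The only real obstacle is cleanly migrating the proof of the preceding theorem from the $\cls(\D)$ setting into the $L(\clq_\theta)$ setting; once that step is in hand, the rest is assembly together with the observation that $\theta(0) = 0$ places all constant functions inside $\clq_\theta$, plus the standard identification of M\"obius stabilizers with affine and elliptic subgroups.
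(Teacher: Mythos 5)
Your proposal is correct, and for the two set identities it follows the paper's own route: both cite the preceding theorem for the nonconstant symbols and observe that the argument transfers verbatim to the $L(\clq_\theta)$ setting (the paper literally says ``analogous result \dots with the same lines of proof''); you are more explicit than the paper about why the constant symbols appear, via $1\in\clq_\theta$ when $\theta(0)=0$. Where you genuinely diverge is the \emph{Moreover} part. The paper writes $\vp=e^{i\alpha}b_a\in D(\clq_\theta)^*\cap\mathrm{Aut}(\D)$ and computes by hand that the fixed-point condition $\vp(1/\bar{\lambda})=1/\bar{\lambda}$ is equivalent to $a$ lying on an explicit circle $C(\lambda)\subseteq\D$ of positive radius, which yields uncountability; you instead invoke the symmetry principle for M\"obius maps preserving $\T$ to conclude that fixing $1/\bar{\lambda}$ forces fixing $\lambda$, so that $D(\clq_\theta)^*\cap\mathrm{Aut}(\D)$ is exactly the stabilizer of $\lambda$ in $\mathrm{Aut}(\D)$, i.e.\ the one-parameter elliptic family $b_\lambda^{-1}\circ(e^{it}z)\circ b_\lambda$. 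Your argument is shorter, more conceptual, and makes the group structure of this set transparent, whereas the paper's computation gives the explicit locus $C(\lambda)$ of the parameters $a$ (which it then uses in the remark following the corollary). For $L(\clq_\theta)^*$ the two arguments are again equivalent in substance: the paper notes closure under inversion plus uncountability, while you conjugate the stabilizer of $1/\bar{\lambda}$ to the affine group; either way an uncountable group cannot be cyclic. One small caveat, inherited from the statement rather than introduced by you: the constant $c=1/\bar{\lambda}$ sends $\frac{1}{1-\bar{\lambda}z}\in\clq_\theta$ to the constant $\infty$, so strictly the $\mathbb{C}$ in the description of $L(\clq_\theta)$ should be $\mathbb{C}\setminus\{1/\bar{\lambda}\}$ (as the paper itself does in the $\theta=z^mb_\lambda^n$, $m\ge 2$ case); neither your proof nor the paper's addresses this, so it is not a defect of your argument relative to the paper's.
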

\begin{proof}
The first two identities follow from the previous theorem. Pick $\varphi \in D(\clq_\theta)^* \cap  \text{Aut}(\mathbb{D})$. In particular, there exist $\alpha \in \mathbb{R}$ and $a \in \mathbb{D}$ such that
\begin{equation}\label{eqn: new 1}
\varphi(z)=e^{i\alpha} b_a.
\end{equation}
The condition $\vp\left(\dfrac{1}{\bar{\lambda}}\right)=\dfrac{1}{\bar{\lambda}}$ is equivalent to the identity
\begin{equation}\label{eqn: new 2}
e^{i\alpha} \bar{\lambda}(1- a \bar{\lambda})=\bar{\lambda}-\bar{a}.
\end{equation}
From this, we deduce that $|\lambda|=\left|\dfrac{a-\lambda}{1-a\bar{\lambda}}\right|$, which implies
\[
1-|\lambda|^2=1-\left|\dfrac{a-\lambda}{1-a\bar{\lambda}}\right|^2.
\]
Since
\[
1-\left|\dfrac{a-\lambda}{1-a\bar{\lambda}}\right|^2=\dfrac{(1-|\lambda|^2)(1-|a|^2)}{|1-a\bar{\lambda}|^2},
\]
and $1-|\lambda|^2\neq 0$, the above identity simplifies to
\[
|1-a\bar{\lambda}|^2+|a|^2=1,
\]
equivalently,
\[
(1+|\lambda|^2)|a|^2-2\text{Re}(a\bar{\lambda})=0.
\]
This implies that $a$ lies on the circle $C(\lambda)$, centered at $\dfrac{\lambda}{1+|\lambda|^2}$ with radius $\dfrac{|\lambda|}{1+|\lambda|^2}$, that is, $a \in C(\lambda)$, where
\[
C(\lambda) = \left\{z \in \mathbb{C}: \left|z-\dfrac{\lambda}{1+|\lambda|^2}\right|^2=\left(\dfrac{|\lambda|}{1+|\lambda|^2}\right)^2\right\}.
\]
Since $0<|\lambda|<1$, it follows that
\[
\dfrac{|\lambda|}{1+|\lambda|^2}<\dfrac{1}{2},
\]
and hence
\[
C(\lambda) \subseteq \mathbb{D}.
\]
In particular, the set $D(\clq_\theta)^* \cap  \text{Aut}(\mathbb{D})$ is uncountable. Since $D(\clq_\theta)^* \subseteq L(\clq_\theta)^*$, it follows that $L(\clq_\theta)^*$ is also uncountable. If a M\"{o}bius map $\varphi$ fixes $1/\bar{\lambda}$, then its inverse $\varphi^{-1}$ also fixes $1/\bar{\lambda}$. Hence $\varphi^{-1} \in L(\clq_\theta)^*$, which implies that $L(\clq_\theta)^*$ forms a group with uncountably many elements. Therefore, it cannot be a cyclic group. This concludes the proof.
\end{proof}

By \eqref{eqn: new 1} and \eqref{eqn: new 2}, we have
\[
D(\clq_\theta)^* \cap  \text{Aut}(\mathbb{D})= \left\{\frac{1}{\bar{\lambda}}  \overline{b_a(\lambda)}b_a: a \in C(\lambda)\right\}.
\]
Note also that $D(\clq_\theta)^*$ is a group if and only if every element in $D(\clq_\theta)^*$ is an automorphism of $\mathbb{D}$, that is,
\[
D(\clq_\theta)^* \cap  \text{Aut}(\mathbb{D})=D(\clq_\theta)^*.
\]

Next, we proceed to the case of Blaschke products that have the origin as a zero of higher multiplicity, along with  one nonzero zero:
\[
\theta = z^m b_\lambda^n,
\]
where $m\geq 2$ and $n\geq 1$. An incomplete statement addressing this setting appeared in \cite[Theorem 2.9]{Jav}. In particular, it omitted the most nontrivial aspect of the result. In fact, the result differs substantially between the cases $m-1\neq n$ and $m-1 = n$, as outlined below. First, we consider the case of $m-1\neq n$.

\begin{theorem}\label{thm m geq 2 lambda}
Let $m\geq 2$, $n\geq 1$ be natural numbers and let $\lambda \in \D \setminus \{0\}$. Define
\[
\theta = z^m b_\lambda^n.
\]
If $m-1\neq n$, then
\[
L(\clq_\theta)= \left\{(1-\bar{\lambda}a)z+a: a\neq \frac{1}{\bar\lambda} \right\} \cup
\left(\mathbb{C} \setminus \left\{\frac{1}{\bar{\lambda}}\right\}\right),
\]
and
\[
D(\clq_\theta) =  \left\{z\right\} \cup \D.
\]
\end{theorem}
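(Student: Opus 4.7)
The plan is to split by whether $\varphi$ is constant, and within the nonconstant case to exploit the basis of $\clq_\theta$ from \eqref{eqn: basis all}, namely $\{1,z,\ldots,z^{m-1}\}\cup\{(1-\bar\lambda z)^{-1},\ldots,(1-\bar\lambda z)^{-n}\}$. For $\varphi\equiv c$ constant, the kernels $(1-\bar\lambda z)^{-k}$ force $c\neq 1/\bar\lambda$ (otherwise the composition is undefined at the pole), and then $C_\varphi f = f(c)$ is itself a constant, which lies in $\clq_\theta$ because $1\in\clq_\theta$; this yields $\mathbb{C}\setminus\{1/\bar\lambda\}\subseteq L(\clq_\theta)$ and, after further imposing $\varphi\in\cls(\D)$, $\D\subseteq D(\clq_\theta)$. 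The reverse containments for the constant parts are immediate from the pole obstruction.

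For a nonconstant $\varphi\in L(\clq_\theta)$, the key first step uses $z\in\clq_\theta$ (which holds since $m\geq 2$) to conclude $\varphi=C_\varphi z\in\clq_\theta$; in particular $\varphi$ is a rational function whose only pole in $\mathbb{C}$ is at $1/\bar\lambda$, of order at most $n$. Writing $\varphi=p(z)/(1-\bar\lambda z)^n$ with $\deg p\leq m+n-1$ and applying $C_\varphi$ to $(1-\bar\lambda z)^{-1}\in\clq_\theta$ yields $(1-\bar\lambda\varphi)^{-1}\in\clq_\theta$, and since
\[
1-\bar\lambda\varphi=\frac{(1-\bar\lambda z)^n-\bar\lambda p(z)}{(1-\bar\lambda z)^n},
\]
the $\clq_\theta$-membership forces $(1-\bar\lambda z)^n-\bar\lambda p(z)$ to be a nonzero scalar multiple of a power of $(1-\bar\lambda z)$. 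Solving gives the closed form
\[
\varphi(z)=\frac{1}{\bar\lambda}-\frac{1}{C\bar\lambda}(1-\bar\lambda z)^{d}
\]
for some $C\neq 0$ and integer $d$.

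The main obstacle is to deduce $d=1$, which turns this expression into the advertised affine map $(1-\bar\lambda a)z+a$ with $a=(1-1/C)/\bar\lambda\neq 1/\bar\lambda$. I would attack this by applying $C_\varphi$ to the two extreme basis elements $(1-\bar\lambda z)^{-n}$ and $z^{m-1}$: the images are $C^n(1-\bar\lambda z)^{nd}$ and $\varphi^{m-1}$ respectively, and each must lie in $\clq_\theta$. This imposes a polynomial-degree cap of $m-1$ (when the image is a polynomial, i.e.\ $d\geq 0$) and a pole-order cap of $n$ at $1/\bar\lambda$ (when the image has a pole there, i.e.\ $d<0$). For $d\geq 0$ these read $nd\leq m-1$ and $(m-1)d\leq m-1$, squeezing $d\in\{0,1\}$; the case $d=0$ reproduces the excluded constant $1/\bar\lambda$, leaving $d=1$. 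The analogous pole-order analysis in the $d<0$ M\"obius regime produces $n|d|\leq m-1$ and $(m-1)|d|\leq n$, whose product forces $d^2\leq 1$, and the residual $d=-1$ case must be ruled out by closer examination using the remaining basis elements.

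For $D(\clq_\theta)$, Proposition \ref{prop: D sub L} (applicable since $\deg\theta=m+n\geq 3$, so $\theta$ is not a rotation) gives $D(\clq_\theta)\subseteq L(\clq_\theta)$. Hence any nonconstant $\varphi\in D(\clq_\theta)$ is affine of the form $(1-\bar\lambda a)z+a$, and the self-map condition $\varphi(\D)\subseteq\D$ gives $|1-\bar\lambda a|+|a|\leq 1$. Combining with the reverse triangle inequality $|1-\bar\lambda a|\geq 1-|\lambda||a|$ yields $|a|(1-|\lambda|)\leq 0$; since $|\lambda|<1$, this forces $a=0$, and therefore $\varphi=z$, completing the characterization.
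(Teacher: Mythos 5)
Your strategy is sound up through the closed form $\varphi(z)=\tfrac{1}{\bar\lambda}-\tfrac{1}{C\bar\lambda}(1-\bar\lambda z)^{d}$ and the squeeze to $d\in\{1,-1\}$. (There is a sign slip: $C_\varphi\left((1-\bar\lambda z)^{-n}\right)=C^{n}(1-\bar\lambda z)^{-nd}$, so for $d>0$ the operative constraint is the pole-order cap $nd\le n$ rather than a degree cap; either way $d>0$ forces $d=1$, and your inequalities for $d<0$ are the correct ones.) The genuine gap is the deferred case $d=-1$, and it cannot be closed: your own bounds $n|d|\le m-1$ and $(m-1)|d|\le n$ show $d=-1$ survives exactly when $n=m-1$, and in that case these maps really do belong to $L(\clq_\theta)$. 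Indeed, for $\theta=z^{m}b_\lambda^{m-1}$ and $\varphi=\tfrac{1}{\bar\lambda}-\tfrac{1}{C\bar\lambda(1-\bar\lambda z)}$ one has $1-\bar\lambda\varphi=\tfrac{1}{C}(1-\bar\lambda z)^{-1}$, so $C_\varphi\left((1-\bar\lambda z)^{-k}\right)=C^{k}(1-\bar\lambda z)^{k}$ is a polynomial of degree $k\le n=m-1$, while $C_\varphi(z^{k})=\varphi^{k}$ is a combination of $(1-\bar\lambda z)^{-j}$ with $j\le k\le m-1=n$; every basis element of $\clq_\theta$ lands back in $\clq_\theta$. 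Worse for the theorem, the choice $C=(1-|\lambda|^{2})^{-1}$ gives $\varphi=-b_\lambda\in \mathrm{Aut}(\D)$, so a nonidentity automorphism lies in $D(\clq_\theta)$ as well. Concretely, for $\theta=z^{2}b_{1/2}$ and $\varphi(z)=\frac{1-2z}{2-z}=-b_{1/2}(z)$ one checks $C_\varphi 1=1$, $C_\varphi z=2-\frac{3/2}{1-z/2}$, and $C_\varphi\left(\frac{1}{1-z/2}\right)=\frac{4-2z}{3}$, all in $\clq_\theta=\mathrm{span}\left\{1,z,\frac{1}{1-z/2}\right\}$.

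So the statement itself is false when $n=m-1$, and your analysis, far from having a repairable hole, locates the error precisely. The paper's own proof goes astray at the corresponding branch: after writing $\frac{1}{1-\bar\lambda\varphi}=c_0+\cdots+c_{m-1}z^{m-1}+\sum_k d_k(1-\bar\lambda z)^{-k}$ it asserts that $d_k=0$ for all $k$ contradicts the coprimality of $p$ and $q$ in $\varphi=p/q$; this is not true (in the example above $\frac{1}{1-\bar\lambda\varphi}$ is the polynomial $C(1-\bar\lambda z)$ while $p$ and $q$ share no factor), and that polynomial branch is exactly your $d<0$ regime. Your $D(\clq_\theta)$ argument (the estimate $|1-\bar\lambda a|+|a|\le 1$ forcing $a=0$) is correct for affine symbols but inherits the failure, since it presupposes that $L(\clq_\theta)$ contains only affine maps. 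Outside the case $n=m-1$ your squeeze genuinely eliminates $d=-1$ and the rest of your argument goes through, so both the theorem and your proof are valid precisely when $n\neq m-1$.
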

\begin{proof}
In view of \eqref{eqn: basis all} and \eqref{eqn: basis special}, we have
\[
\clq_\theta=\text{span}\left\{1, z,\dots, z^{m-1}, \dfrac{1}{1-\bar{\lambda}z}, \dots, \dfrac{1}{\left(1-\bar{\lambda}z\right)^{n }}\right\}.
\]
If $\varphi \in \cls(\D)$ is constant, then $C_\varphi$ maps $H^2(\mathbb{D})$ into the space of constant functions. Therefore, only subspaces that contain constants can be invariant under $C_\varphi$. On the other hand, If $\varphi(z) = z$, then the operator $C_\varphi$ is the identity operator, implying that $\clq_\theta$ is invariant under $C_\varphi$. Thus $D(\clq_\theta)$ contains $\mathbb{D}\cup \{z\}$. To achieve a similar inclusion of $L(\clq_\theta)$, we consider $\varphi(z) = (1-\bar{\lambda}a)z+a$, $a\neq 1/\bar{\lambda}$. Then
\[
\dfrac{1}{1-\bar{\lambda}\varphi(z)}= \frac{1}{1-\bar{\lambda}a} \frac{1}{1-\bar{\lambda}z}.
\]
A simple computation (along with the representation of $\clq_\theta$ as above) then shows that $\clq_\theta$ is invariant under $C_\varphi$, and hence $L(\clq_\theta)$ contains the functions appearing on the right-hand side of the identity in the statement.

\noindent Now we turn to prove the reverse inclusions. First we do this for $L(\clq_\theta)$. Pick $\varphi\in L(\clq_\theta)^*$. Then, $\varphi$ is a nonconstant rational function of the form
\[
\varphi= \frac{p}{q},
\]
where $p$ and $q$ are polynomials with no common factors. As $\dfrac{1}{1-\bar{\lambda}z} \in\clq_\theta$,
we have
\[
\dfrac{1}{1-\bar{\lambda}\varphi} =C_\varphi\left(\dfrac{1}{1-\bar{\lambda}z}\right) \in \clq_\theta,
\]
and hence there exist scalars $c_1, \dots, c_{m-1}$ and $d_1, \dots, d_n$,  not all zero, such that
\begin{equation}\label{cd}
\dfrac{1}{1-\bar{\lambda}\varphi} =c_0+c_1z+\dots+c_{m-1}z^{m-1}+\sum_{k=1}^n
\dfrac{d_{k}}{(1-\bar{\lambda}z)^k}.
\end{equation}
In view of $\varphi= \frac{p}{q}$, we rewrite this as
\begin{equation}\label{cddd}
\dfrac{q}{q-\bar{\lambda}p} =c_0+c_1z+\dots+c_{m-1}z^{m-1}+\sum_{k=1}^n
\dfrac{d_{k}}{(1-\bar{\lambda}z)^k}.
\end{equation}
We claim that there exists $j\in \{1,\dots, n\}$ such that
\[
d_j\neq 0.
\]
To see this, assume to the contrary that $d_j=0$ for all $j \in \{1, \ldots, n\}$. By \eqref{cddd}, we know that $\dfrac{q}{q-\bar{\lambda}p}$ is a polynomial: 
\[
\dfrac{q}{q-\bar{\lambda}p} \in \mathbb{C}[z].
\]
First, let us consider the case where $q$ is constant. Since $\varphi$ is nonconstant, $p$ must be a nonconstant polynomial. Therefore, $\dfrac{q}{q-\bar{\lambda}p}$ cannot be a polynomial, which is a contradiction. Now, let us consider the case where $q$ is a nonconstant polynomial. As $z\in \clq_\theta$, we have $\varphi= \frac{p}{q} \in \clq_\theta$. Since $q$ is nonconstant it must vanish only at $1/\bar{\lambda}$ and have no other zeros. Because $\dfrac{q}{q-\bar{\lambda}p}$ is a polynomial, $q-\bar{\lambda}p$ must be a factor of $q$. If $q-\bar{\lambda}p$ is nonconstant, then it must vanish at $1/\bar{\lambda}$, and consequently, $p$ must also vanish at $1/\bar{\lambda}$, which contradicts the fact that $p$ and $q$ have no common factors. Now let us consider the remaining case where $q-\bar{\lambda}p$ is constant. Suppose $m, n$ satisfy that $m-1>n$. The map $\varphi$ has a pole at $1/\bar{\lambda}$ with order at least one. Consequently,
\[
C_\varphi(z^{m-1})=\varphi^{m-1},
\]
has pole at $1/\bar{\lambda}$ with order at least $m-1$ which is strictly bigger than $n$. Therefore,
\[
C_\varphi(z^{m-1})\notin \clq_\theta,
\]
a contradiction. Next, suppose $m, n$ satisfy $m-1<n$. Since $q-\bar{\lambda}p$ is constant and $q$ is nonconstant, we have
\[
\frac{q}{q-\bar{\lambda}p} = \frac{1}{1-\bar{\lambda}\varphi},
\]
is a polynomial of degree at least one. This implies that
\[
C_\vp \Big(\dfrac{1}{(1-\bar{\lambda}z)^n}\Big) = \dfrac{1}{(1-\bar{\lambda} \varphi)^n}
\]
is a polynomial of degree at least $n$, which is strictly bigger than $m-1$. Therefore,
\[
C_\vp \Big(\dfrac{1}{(1-\bar{\lambda}z)^n}\Big)\notin\clq_\theta,
\]
a contradiction. This proves the claim that $d_j\neq 0$ for some $j\in \{1,\dots, n\}$. Consequently, $1/(1-\bar{\lambda} \varphi)$ must have a pole at $1/\bar{\lambda}$ with order at least one. Hence, $\dfrac{1}{(1-\bar{\lambda} \varphi)^n}$ must have a pole at $1/\bar{\lambda}$ of order at least $n$. As
\[
\frac{1}{(1-\bar{\lambda}z)^n} \in \clq_\theta,
\]
it can have a pole at $1/\bar{\lambda}$ of order at most $n$. Therefore, $\dfrac{1}{(1-\bar{\lambda} \varphi)^n}$ must have a pole at $1/\bar{\lambda}$ of order $n$. Consequently, $1/(1-\bar{\lambda} \varphi)$ must have a pole at $1/\bar{\lambda}$ of order exactly one. Then \eqref{cd} becomes
\begin{equation*}
\dfrac{1}{1-\bar{\lambda}\varphi} =c_0+c_1z+\dots+c_{m-1}z^{m-1}+ \dfrac{d_{1}}{1-\bar{\lambda}z},
\end{equation*}
where $d_1\neq 0$. Since the right-hand side of the above identity has a simple pole at $1/\bar{\lambda}$, it follows that $1-\bar{\lambda} \varphi$ must vanish at $1/\bar{\lambda}$, that is,

\[
\varphi\left(1/\bar{\lambda}\right)= 1/\bar{\lambda}.
\]
As $z \in \clq_\theta$, we have $\varphi \in \clq_\theta$. On the other hand, $\varphi(1/\bar{\lambda})\neq \infty$ implies that $\varphi$ is a polynomial of degree at most $m-1$. It follows that
\[
\varphi(z)=b_0+b_1z+\dots+b_{m-1}z^{m-1},
\]
for some constants $b_0, b_1, \dots, b_{m-1}$. If $\deg \varphi \geq 2$, then $1-\bar{\lambda} \varphi$ is also polynomial of degree at least $2$. However, this is not possible, since $\dfrac{1}{1-\bar{\lambda}\varphi}$ has only a simple pole. Thus, we must have
\[
\varphi(z)=b_0+b_1z,
\]
with $b_1\neq 0$, because $\varphi$ is nonconstant. Consequently, $1-\bar{\lambda}\varphi$ is polynomial of degree $1$ and vanishing at $1/\bar{\lambda}$. There exists a nonzero scalar $\gamma$ such that
\begin{equation}\label{ld}
1-\bar{\lambda}\varphi(z)=\gamma(1-\bar{\lambda}z).
\end{equation}
This implies
\[
\varphi=\gamma z+\left(\frac{1-\gamma}{\bar{\lambda}}\right),
\]
equivalently,
\[
\varphi=(1-\bar{\lambda}a)z+a,
\]
for some $\gamma\neq 0$ and $a\neq 1/\bar{\lambda}$. This establishes the other inclusion of the set $L(\clq_\theta)$ as described in the statement. For the corresponding inclusion of the set $D(\clq_\theta)$, from \eqref{ld} we obtain
\[
\dfrac{1}{1-\bar{\lambda}\varphi(z)}=\dfrac{\frac{1}{\gamma}}{1-\bar{\lambda}z}.
\]
By Lemma \ref{lem1}, it follows that $\varphi(z)=z$, $z \in \mathbb{D}$. This completes the proof.
\end{proof}

Now we turn to the case $m-1= n$. As we shall see, the situation differs significantly from the case $m-1 \neq n$.

\begin{theorem}\label{thm m geq 2 lambda-2}
Let $m\geq 2$, $n\geq 1$ be natural numbers and let $\lambda \in \D \setminus \{0\}$. Suppose $m-1= n$, and define
\[
\theta = z^m b_\lambda^n=z^{n+1} b_\lambda^n.
\]
Then
\[
L(\clq_\theta)= \left\{(1-\bar{\lambda}a)z+a:a\neq \frac{1}{\bar{\lambda}} \right\} \cup
\left\{\frac{c-z}{1-\bar{\lambda}z}:c\neq \frac{1}{\bar{\lambda}} \right\} \cup
 \left(\mathbb{C} \setminus \left\{\frac{1}{\bar{\lambda}}\right\}\right),
\]
and
\[
D(\clq_\theta) =L(\clq_\theta)\cap \cls(\D) =  \left\{z\right\} \cup \left\{\frac{\lambda-z}{1-\bar{\lambda}z} \right\} \cup \D.
\]
\end{theorem}
\begin{proof}
We begin by recalling as in the proof of the previous theorem that
\[
\clq_\theta=\text{span}\left\{1, z,\dots, z^{n}, \dfrac{1}{1-\bar{\lambda}z}, \dots, \dfrac{1}{\left(1-\bar{\lambda}z\right)^{n }}\right\}.
\]
By computations similar to those in the proof of the previous theorem, it is easy to see that $L(\clq_\theta)$ contains the functions on the right-hand side given in the statement. To prove the reverse inclusion, assume that $\varphi\in L(\clq_\theta)^*$. Then $\varphi$ is a nonconstant rational function of the form
\[
\varphi= \frac{p}{q},
\]
where $p$ and $q$ are polynomials with no common factors. We prove the result by considering three distinct cases:

\noindent \textsf{Case I. $p$ is constant}: In this case, $q$ must be nonconstant since $\varphi$ is nonconstant. As $\varphi\in\clq_\theta$ and is not a polynomial, it follows that $\vp$ must have a pole at $1/\bar{\lambda}$. Moreover, $\vp^n\in\clq_\theta$ cannot have a pole at $1/\bar{\lambda}$ order strictly greater than $n$, which implies that $1/\bar{\lambda}$ is a simple pole of $\vp$. Since $p$ is constant, we have
\[
\vp(z)=\frac{c}{1-\bar{\lambda}z},
\]
for some constant $c\neq 0$. Consequently, $\dfrac{1}{1-\bar{\lambda}\varphi}$ is not a polynomial and therefore must have a pole at $1/\bar{\lambda}$. This would require
\[
\vp\Big(\frac{1}{\bar{\lambda}}\Big) = \frac{1}{\bar{\lambda}},
\]
which is impossible since $\vp(1/\bar{\lambda})=\infty$.

\noindent \textsf{Case II. $q$ is constant}: In this case, $\vp$ is a nonconstant polynomial. Since $\vp^n\in\clq_\theta$ cannot be a polynomial of degree strictly greater than $n$, it follows that $\vp$ is an affine map (a polynomial of degree one). Hence, $\dfrac{1}{1-\bar{\lambda}\vp}$ is not a polynomial. Moreover, as
\[
C_\vp \Big(\dfrac{1}{(1-\bar{\lambda}z)^n}\Big) = \dfrac{1}{(1-\bar{\lambda} \varphi)^n}\in\clq_\theta,
\]
we conclude that $\dfrac{1}{1-\bar{\lambda}\vp}$ has simple pole at $1/\bar{\lambda}$. We also have that $1-\bar{\lambda}\vp$ is a first-degree polynomial, we must have
\[
1-\bar{\lambda}\vp=c (1-\bar{\lambda}z)
\]
for some constant $c\neq 0$. As in previous Theorem \ref{thm m geq 2 lambda}, this yields
\[
\varphi=(1-\bar{\lambda}a)z+a,\, a\neq \frac{1}{\bar{\lambda}}.
\]

\noindent \textsf{Case III. Neither $p$ nor $q$ is constant}: Since $\vp$ is not a polynomial, as before, we deduce that $\vp$ has a simple pole at $1/\bar{\lambda}$, equivalently, $q$ is a first-degree polynomial vanishing at $1/\bar{\lambda}$. Thus,
\[
\infty=\vp\Big(\frac{1}{\bar{\lambda}}\Big)\neq \frac{1}{\bar{\lambda}},
\]
which implies that $1-\bar{\lambda}\vp$ does not vanish at $1/\bar{\lambda}$. Then $1/\bar{\lambda}$ cannot be a pole of $\dfrac{1}{1-\bar{\lambda}\vp}$, and hence $\dfrac{1}{1-\bar{\lambda}\vp}$ is a polynomial. Since $\dfrac{1}{(1-\bar{\lambda}\vp)^n} \in\clq_\theta$, we have
\[
\frac{q}{q-\bar{\lambda}p}=\frac{1}{1-\bar{\lambda}\vp},
\]
a polynomial of degree one. As $q$ is a first-degree polynomial vanishing at $1/\bar{\lambda}$, we get $q-\bar{\lambda}p$ is a nonzero constant, and hence
\[
\frac{1}{1-\bar{\lambda}\vp}=\frac{1}{\gamma} (1-\bar{\lambda}z),
\]
for some nonzero constant $\gamma$. Equivalently,
\[
\vp(z)=\frac{\frac{1-\gamma}{\bar{\lambda}}-z}{1-\bar{\lambda}z}=\frac{c-z}{1-\bar{\lambda}z},
\]
for some constant $c\neq 1/\bar{\lambda}$. Hence verifies the statement for $L(\clq_\theta)$. For $D(\clq_\theta)$, observe that $(1-\bar{\lambda}a)z+a$ is self-map of $\D$ if and only if it is the identity. Next, we determine all $c\neq 1/\bar{\lambda}$ such that
\[
z \mapsto \dfrac{c-z}{1-\bar{\lambda}z},
\]
is self-map of $\D$. This holds if and only if (cf. \cite{Mut})
\begin{equation}\label{D}
|c-\lambda|+|1-c\bar{\lambda}|\leq 1-|\lambda|^2.
\end{equation}
Set $w=c-\lambda$. Then
\[
c\bar{\lambda}=(w+\lambda)\bar{\lambda}=|\lambda|^2+w\bar{\lambda}.
\]
Thus,
\[
|1-c\bar{\lambda}|=|1-|\lambda|^2 - w\bar{\lambda}|\geq 1-|\lambda|^2-|w\bar{\lambda}|.
\]
Consequently,
\[
|w|+|1-c\bar{\lambda}|\geq 1-|\lambda|^2+ |w|(1-|\lambda|).
\]
Now, \eqref{D} implies
\[
1-|\lambda|^2+ |w|(1-|\lambda|)\leq 1-|\lambda|^2,
\]
which forces $|w|=0$. Hence, $c=\lambda$. This completes the proof.
\end{proof}

The following is now immediate:

\begin{corollary}\label{cor: m 2 n 1 lambda}
Let $m\geq 2$ and $n\geq 1$ be natural numbers, and let $\lambda \in \D \setminus \{0\}$. Define
\[
\theta = z^m b_\lambda^n.
\]
Then the following hold:
\begin{enumerate}
\item $L(\clq_\theta)^*$ is an uncountable group.
\item If $m-1\neq n$, then $D(\clq_\theta)^*$ is a trivial group.
\item If $m-1=n$, then $D(\clq_\theta)^*$ is a cyclic group of order $2$.
\end{enumerate}
\end{corollary}

With this result, we now have a clearer picture of the structure of the sets $D(\clq_\theta)$ and $L(\clq_\theta)$ when $\theta = z^m b_\lambda^n$ for $\lambda \in \D \setminus \{0\}$ and $m, n \geq 1$.

\section{Blaschke products vanishing at $0$ and $\# \clz(\theta) \geq 3$}\label{sec: mob}

As in the previous section, we continue here under the assumption that the finite Blaschke product $\theta$ vanishes at the origin. Here, we assume that $\theta$ has at least two distinct zeros, aside from the origin:
\[
\# (\clz(\theta)\setminus \{0\}) \geq 2.
\]
The main difference in this setting is that the sets $D(\clq_\theta)$ and $L(\clq_\theta)$ will contain more M\"{o}bius transformations.

Given a M\"{o}bius transformation $\varphi(z)=\dfrac{az+b}{cz+d}$, we construct another M\"{o}bius transformation $\tilde{\vp}$ as
\[
\tilde{\varphi}(z)=\dfrac{\bar{a}z-\overline{c}}{-\bar{b}z+\bar{d}}.
\]
Recall, for a self-map $f$ and a natural number $n$, we write
\[
f^{[n]}= \underbrace{f \circ \dots \circ f}_{n \text{ times }}.
\]
These transformations behave naturally:

\begin{lemma}\label{pro1}
Let $\vp$ be a M\"{o}bius transformation, and let $n \in \mathbb{N}$. We have the following:
\begin{itemize}
\item[(i)]  $\varphi \in \cls(\D)$ if and only if $\tilde{\varphi} \in \cls(\D)$.
\item[(ii)] $\varphi^{[n]}=z$ if and only if $\tilde{\varphi}^{[n]}=z$.
\end{itemize}
\end{lemma}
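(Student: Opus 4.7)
The proof hinges on the composition identity
\begin{equation*}
\widetilde{\varphi \circ \psi} \;=\; \tilde{\psi} \circ \tilde{\varphi}
\end{equation*}
valid for any two M\"{o}bius transformations $\varphi,\psi$, together with the easily verified facts $\widetilde{z} = z$ (since $z$ has coefficients $a=d=1,\,b=c=0$) and $\tilde{\tilde{\varphi}} = \varphi$. The cleanest way to establish the composition identity is to identify a M\"{o}bius transformation $\varphi(z) = (az+b)/(cz+d)$ with its coefficient matrix $A = \begin{pmatrix} a & b \\ c & d \end{pmatrix}$, taken modulo nonzero scalars. Under this identification, $\tilde{\varphi}$ corresponds to $JA^{*}J$ where $J = \mathrm{diag}(1,-1)$, and the identity then follows from $(MN)^{*} = N^{*}M^{*}$ together with $J^{2} = I$.

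Part (ii) is then immediate: iterating the composition identity gives $\widetilde{\varphi^{[n]}} = \tilde{\varphi}^{[n]}$, so
\[
\varphi^{[n]} = z \;\Longleftrightarrow\; \widetilde{\varphi^{[n]}} = \widetilde{z} = z \;\Longleftrightarrow\; \tilde{\varphi}^{[n]} = z.
\]

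For part (i), by the involution $\tilde{\tilde{\varphi}} = \varphi$, only the forward implication $\varphi \in \cls(\D) \Longrightarrow \tilde{\varphi} \in \cls(\D)$ needs proof. The plan is to reduce to the case $\varphi(0)=0$. The key preliminary observation is that the tilde operation preserves $\mathrm{Aut}(\D)$: substituting $T(z) = e^{i\alpha}(z-\beta)/(1-\bar{\beta}z)$ into the definition of $\tilde{T}$ produces another disk automorphism, explicitly $\tilde{T}(z) = e^{-i\alpha}(z-\delta)/(1-\bar{\delta}z)$ with $\delta = -e^{i\alpha}\beta$. Given $\varphi \in \cls(\D)$, choose $T \in \mathrm{Aut}(\D)$ with $T(\varphi(0)) = 0$ and set $\psi = T \circ \varphi$; then $\psi \in \cls(\D)$ with $\psi(0)=0$. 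The composition identity yields $\tilde{\varphi} = \tilde{\psi} \circ \tilde{T}^{-1}$, and since $\tilde{T}^{-1} \in \mathrm{Aut}(\D)$, it suffices to show $\tilde{\psi} \in \cls(\D)$.

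This reduces the task to a direct calculation. Writing $\psi(z) = Az/(Cz+1)$, the tilde formula gives the affine map $\tilde{\psi}(z) = \bar{A}z - \bar{C}$. Minimizing $|Cz+1|^{2}$ on $|z|=1$ shows $\psi \in \cls(\D) \Longleftrightarrow |A|+|C| \leq 1$; on the other hand, $\max_{\bar{\D}}|\bar{A}z - \bar{C}| = |A|+|C|$ by the triangle inequality, so also $\tilde{\psi} \in \cls(\D) \Longleftrightarrow |A|+|C| \leq 1$. The two conditions coincide, completing the proof. I expect the main obstacle to be precisely this reduction via $T$ together with the stability of $\mathrm{Aut}(\D)$ under the tilde operation; without them, directly comparing the self-map conditions for $\varphi$ and $\tilde{\varphi}$ in terms of the coefficients requires reconciling the a priori distinct quantities $|c|^{2}+|d|^{2}-|a|^{2}-|b|^{2}$ (for $\varphi$) and $|b|^{2}+|d|^{2}-|a|^{2}-|c|^{2}$ (for $\tilde{\varphi}$), which is quite delicate.
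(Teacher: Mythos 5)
Your proof is correct, but it takes a genuinely different route from the paper's. The paper proves both parts at once from the single conjugation formula $\tilde{\varphi}=\frac{1}{z}\circ \eta^{-1}\circ \frac{1}{z}$, where $\eta(z)=\overline{\varphi(\bar z)}$: since $z\mapsto \bar z$ preserves $\D$ and $z\mapsto 1/z$ interchanges $\D$ with $\{|z|>1\}\cup\{\infty\}$, the self-map property and the relation $\varphi^{[n]}=z$ transfer immediately to $\tilde\varphi$. You instead first establish the anti-homomorphism identity $\widetilde{\varphi\circ\psi}=\tilde\psi\circ\tilde\varphi$ via the matrix realization $A\mapsto JA^*J$, which gives (ii) at once, and then prove (i) by reducing to the origin-fixing case: you check that the tilde operation preserves $\mathrm{Aut}(\D)$, write $\varphi=T^{-1}\circ\psi$ with $\psi(0)=0$, and verify by direct computation that for $\psi(z)=Az/(Cz+1)$ both $\psi\in\cls(\D)$ and $\tilde\psi(z)=\bar Az-\bar C\in\cls(\D)$ are equivalent to $|A|+|C|\le 1$. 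All the steps check out (the order reversal in the composition identity is harmless for iterates of a single map, and the involution $\tilde{\tilde\varphi}=\varphi$ correctly reduces (i) to one implication). The trade-off: the paper's conjugation formula is shorter, handles (i) and (ii) uniformly, and in fact implies your composition identity as a byproduct; your argument is more computational in part (i) but makes the algebraic structure of the tilde operation (an anti-automorphism of the Möbius group fixing $\mathrm{Aut}(\D)$) explicit, which is of some independent interest. Your closing remark that a direct coefficient comparison would be ``delicate'' is only motivational and slightly misleading --- the quantities you name do not characterize membership in $\cls(\D)$ anyway --- but this does not affect the proof.
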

\begin{proof}
Define $\eta(z)=\overline{\varphi(\bar{z})}=\dfrac{\bar{a}z+\overline{b}}{\bar{c}z+\bar{d}}$. Then
\[
\eta=\bar{z}\circ \varphi \circ \bar{z}.
\]
Given that $\eta^{-1}(z)=\dfrac{\bar{d}z-\bar{b}}{-\bar{c}z+\bar{a}}$, we have
\[
\tilde{\varphi}=\dfrac{1}{z}\circ \eta^{-1}\circ \dfrac{1}{z}.
\]
In view of this, we see that $\varphi$ is a self-map of $\mathbb{D}$ if and only if $\eta$ is a self-map of $\mathbb{D}$ if and only if $\eta^{-1}$ is a self-map of
\[
\{z: |z|>1\}\cup \{\infty\},
\]
if and only if $\tilde{\varphi}$ is a self-map of $\mathbb{D}$. This completes the proof of part (i). For (ii), we proceed similarly:
\[
\varphi^{[n]}=z \Leftrightarrow \eta^{[n]}=z \Leftrightarrow (\eta^{-1})^{[n]}=z \Leftrightarrow \tilde{\varphi}^{[n]}=z.
\]
This completes the proof of the lemma.
\end{proof}

In the following, we consider finite Blaschke products $\theta$ satisfying $\theta(0)=0$, $\theta'(0)\neq 0$, and $\# (\clz(\theta)\setminus \{0\}) \geq 2$. These conditions imply that there exists a finite Blaschke product $\hat{\theta}$, not vanishing at the origin, such that $\theta = z \hat{\theta}$ and $\# (\clz(\hat\theta)) \geq 2$. The following is a part of \cite[Lemma 2.7 and Lemma 2.10]{Jav}.

\begin{lemma}\label{propo}
Let $\theta$ be a finite Blaschke product satisfying $\theta(0)=0$ and $\# (\clz(\theta)\setminus \{0\}) \geq 2$. If $\varphi \in L(\clq_\theta)^*$, then $\varphi$ is a M\"obius  transformation:
\[
\varphi(z)=\dfrac{az+b}{cz+d},
\]
for some complex numbers $a, b, c, d$ satisfying $ad-bc\neq 0$.
\end{lemma}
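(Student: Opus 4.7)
The plan is to show that $\varphi$ must have degree exactly $1$ as a rational self-map of $\mathbb{C}_\infty$, so that it is a Möbius transformation. Write $\theta = z \prod_{i=1}^n b_{\lambda_i}^{m_i}$ with distinct nonzero $\lambda_i \in \mathbb{D}$ and $n \geq 2$. Then, by the basis description in \eqref{eqn: basis all} and \eqref{eqn: basis special}, the space $\clq_\theta$ is spanned by the constant function $1$ together with $\{(1-\bar{\lambda}_i z)^{-k} : 1 \leq k \leq m_i,\ 1 \leq i \leq n\}$. In particular, every element of $\clq_\theta$ is rational with poles confined to $\{1/\bar{\lambda}_j\}_{j=1}^n$, is finite at $\infty$, and has pole order at $1/\bar{\lambda}_j$ bounded by $m_j$. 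Set $d := \deg \varphi \geq 1$, and suppose for contradiction that $d \geq 2$.

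For each $i$, the image $C_\varphi((1-\bar{\lambda}_iz)^{-1}) = (1-\bar{\lambda}_i\varphi)^{-1}$ lies in $\clq_\theta$ and therefore obeys the pole restrictions above. Finiteness at $\infty$ rules out $\varphi(\infty) = 1/\bar{\lambda}_i$, so all $d$ preimages of $1/\bar{\lambda}_i$ (counted with multiplicity) are finite and lie in $\{1/\bar{\lambda}_j\}_{j=1}^n$. The key step is the following counting argument. Let $e_j \geq 1$ denote the local degree of $\varphi$ at $1/\bar{\lambda}_j$, and set $\sigma(j) := i$ when $\varphi(1/\bar{\lambda}_j) = 1/\bar{\lambda}_i$ (left undefined otherwise). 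Preimage counting gives $d = \sum_{\sigma(j) = i} e_j$ for each $i$; summing over $i$ yields $nd = \sum_{\sigma(j)\text{ defined}} e_j \leq \sum_{j=1}^n e_j \leq nd$, where we used $e_j \leq d$. Equality throughout forces $\sigma$ to be defined on every $j$, each $e_j = d$ (total ramification at each $1/\bar{\lambda}_j$), and, from $d = d\cdot|\sigma^{-1}(i)|$, $|\sigma^{-1}(i)|=1$; hence $\sigma$ permutes $\{1,\ldots,n\}$.

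To close the argument, apply the same invariance to the top-order basis elements. Since $(1-\bar{\lambda}_{\sigma(j)} z)^{-m_{\sigma(j)}} \in \clq_\theta$, its $C_\varphi$-image $(1-\bar{\lambda}_{\sigma(j)}\varphi)^{-m_{\sigma(j)}}$ also lies in $\clq_\theta$. Since $\varphi$ is totally ramified of order $d$ at $1/\bar{\lambda}_j$ with value $1/\bar{\lambda}_{\sigma(j)}$, the factor $1-\bar{\lambda}_{\sigma(j)}\varphi$ vanishes to order $d$ at $1/\bar{\lambda}_j$, so this image has a pole of order $d\, m_{\sigma(j)}$ there. Admissibility forces $d\, m_{\sigma(j)} \leq m_j$ for every $j$. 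Summing over $j$ and invoking the bijectivity of $\sigma$ produces $d \sum_j m_j \leq \sum_j m_j$, whence $d \leq 1$ (as $\sum_j m_j > 0$), contradicting $d \geq 2$. Therefore $d = 1$, and $\varphi$ has the asserted Möbius form. The main obstacle is precisely the counting step, in which one must simultaneously extract total ramification at each $1/\bar{\lambda}_j$ and the bijectivity of the induced permutation $\sigma$ from the rigid pole structure of $\clq_\theta$; once both are in hand, the multiplicity summation closes immediately.
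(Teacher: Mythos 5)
Your proof is correct. Note that the paper does not actually prove Lemma \ref{propo}; it imports the statement verbatim from \cite[Lemma 2.7]{Jav}, so any self-contained argument is necessarily a different route. Your approach is a clean global degree/ramification count on $\mathbb{C}_\infty$: since $\theta'(0)\neq 0$, every element of $\clq_\theta$ is finite at $\infty$ with poles confined to $\{1/\bar{\lambda}_j\}$ of order at most $m_j$; invariance of $\clq_\theta$ under $C_\varphi$ then forces each fiber $\varphi^{-1}(1/\bar{\lambda}_i)$ to sit inside $\{1/\bar{\lambda}_j\}$, and your double count $nd=\sum e_j\leq nd$ correctly extracts total ramification $e_j=d$ at each $1/\bar{\lambda}_j$ together with the bijectivity of the induced map $\sigma$. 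The final step, applying invariance to the top-order basis elements to get $d\,m_{\sigma(j)}\leq m_j$ and summing over the permutation $\sigma$ to conclude $d\leq 1$, is airtight (one could equivalently iterate along a cycle of $\sigma$ to get $d^k\leq 1$). Two small remarks: your argument nowhere uses $\#(\clz(\theta)\setminus\{0\})\geq 2$, so it in fact covers the case $n=1$ as well (consistent with Corollary \ref{cor: z b lambda}, where only Möbius maps appear); and the hypothesis $\theta'(0)\neq 0$ is used exactly where you use it, namely to guarantee finiteness at $\infty$ of elements of $\clq_\theta$, which is what rules out $\varphi(\infty)=1/\bar{\lambda}_i$. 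What your proof buys over the paper's citation is a transparent, self-contained argument whose mechanism (pole bookkeeping on the Riemann sphere) also explains why the conclusion degrades when $0$ is a zero of higher multiplicity, as in the discussion following Theorem \ref{lem2.7}.
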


Note that the full-length version of Lemma 2.7 in \cite{Jav} concerns representations of functions $\varphi \in L(\clq_\theta)$  for the class of finite Blaschke products $\theta$ described above. We now use the above part of the result to provide an alternative classification of functions in the sets $D(\clq_\theta)^*$ and $L(\clq_\theta)^*$:

\begin{theorem}\label{lem2.7}
Let $\theta$ be a finite Blaschke product satisfying $\theta(0)=0$, $\theta'(0)\neq 0$, and $\# (\clz(\theta)\setminus \{0\}) \geq 2$. Then $\varphi \in L(\clq_\theta)^*$  if and only if $\varphi$ is a M\"obius transformation and
\[
mult_\theta(\lambda)=mult_\theta(\tilde{\varphi}(\lambda)),
\]
for all $\lambda \in \clz(\theta)\setminus \{0\}$. Moreover, we have
\[
D(\clq_\theta)^*=L(\clq_\theta)^* \cap \mathcal{S}(\mathbb{D}).
\]
\end{theorem}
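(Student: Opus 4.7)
The plan is to build on the pole-tracking techniques developed in the earlier sections. Writing $\theta = z\prod_i b_{\lambda_i}^{m_i}$, since $\theta'(0)\neq 0$ the basis \eqref{eqn: basis all} for $\clq_\theta$ reduces to the constant $1$ together with $\{(1-\bar{\lambda}z)^{-k} : \lambda \in \clz(\theta)\setminus\{0\},\, 1\leq k \leq mult_\theta(\lambda)\}$. For a non-constant $\vp \in L(\clq_\theta)$, Lemma \ref{propo} already forces $\vp(z) = (az+b)/(cz+d)$ with $ad-bc\neq 0$; the remaining task is to identify, among such M\"obius transformations, those that preserve $\clq_\theta$.

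The heart of the argument is the identity
\[
1 - \bar{\lambda}\,\vp(z) = \frac{(c - \bar{\lambda}a)z + (d - \bar{\lambda}b)}{cz+d},
\]
whose unique zero occurs at $z = \vp^{-1}(1/\bar{\lambda}) = 1/\overline{\tilde{\vp}(\lambda)}$; this identity is precisely the rationale behind the definition of $\tilde{\vp}$. Consequently, for $\lambda \in \clz(\theta)\setminus\{0\}$ with $m := mult_\theta(\lambda)$, the composition $C_\vp\bigl((1-\bar{\lambda}z)^{-m}\bigr)$ has a single pole of order exactly $m$ at $1/\overline{\tilde{\vp}(\lambda)}$, and partial fractions rewrite it as a linear combination of $1$ and $(1-\overline{\tilde{\vp}(\lambda)}z)^{-j}$ for $j = 1,\ldots,m$. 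For this combination to lie in $\clq_\theta$---which, given that $mult_\theta(0) = 1$, excludes nonconstant polynomials as well as poles inside $\D$---one is forced to have $\tilde{\vp}(\lambda) \in \clz(\theta)\setminus\{0\}$ with $mult_\theta(\tilde{\vp}(\lambda)) \geq m$. To upgrade this to equality, I would observe that $C_\vp$ is an injective endomorphism of the finite-dimensional space $\clq_\theta$ (non-constant $\vp$ yields injective composition), hence bijective, so $\vp^{-1} \in L(\clq_\theta)$; a direct check shows $\widetilde{\vp^{-1}} = \tilde{\vp}^{-1}$, and applying the forward inequality to $\vp^{-1}$ at the point $\tilde{\vp}(\lambda)$ yields the opposite bound. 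Sufficiency of the multiplicity condition is then obtained by reversing the partial fraction calculation on each basis element.

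For the identity $D(\clq_\theta) = L(\clq_\theta) \cap \mathcal{S}(\D)$, note that $\theta$ has at least three distinct zeros (the origin together with at least two nonzero zeros), so it is not a rotation; Proposition \ref{prop: D sub L} then yields $D(\clq_\theta) \subseteq L(\clq_\theta)$, and combined with the inclusion $D(\clq_\theta) \subseteq \mathcal{S}(\D)$ from the definition, one direction follows; the reverse inclusion is immediate from the definitions. The hardest step, I expect, is the careful pole-tracking, and in particular ruling out the degenerate cases $\tilde{\vp}(\lambda) \in \{0,\infty\}$, since these would cause the composition to produce a nonconstant polynomial part or a pole at the origin---both incompatible with membership in $\clq_\theta$ given that $mult_\theta(0) = 1$. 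The identity $\widetilde{\vp^{-1}} = \tilde{\vp}^{-1}$, though conceptually clean, also requires a short direct verification, and constant symbols $\vp \in L(\clq_\theta) \cap \D$ fall outside the M\"obius setup and must be handled separately alongside the main argument.
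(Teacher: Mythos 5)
Your proposal is correct, and its skeleton coincides with the paper's: invoke Lemma \ref{propo} to force the M\"obius form, track the unique pole of $C_\vp\bigl((1-\bar{\lambda}z)^{-k}\bigr)$ at $1/\overline{\tilde{\vp}(\lambda)}$ via partial fractions to conclude $\tilde{\vp}(\lambda)\in\clz(\theta)\setminus\{0\}$ with $mult_\theta(\tilde{\vp}(\lambda))\geq mult_\theta(\lambda)$, reverse the computation for sufficiency, and dispose of $D(\clq_\theta)=L(\clq_\theta)\cap\cls(\D)$ by Proposition \ref{prop: D sub L}. The one place where you genuinely diverge is the upgrade of the multiplicity inequality to an equality. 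The paper iterates: it shows $mult_\theta(\tilde{\vp}^{[k]}(\lambda))\leq mult_\theta(\tilde{\vp}^{[k+1]}(\lambda))$ for all $k$, and since $\clz(\theta)$ is finite the orbit must return to $\lambda$, collapsing the chain of inequalities. You instead observe that $C_\vp$ is an injective endomorphism of the finite-dimensional space $\clq_\theta$, hence surjective, so $\vp^{-1}\in L(\clq_\theta)$; combined with the (easily verified) identity $\widetilde{\vp^{-1}}=\tilde{\vp}^{-1}$, applying the forward inequality to $\vp^{-1}$ at $\tilde{\vp}(\lambda)$ gives the reverse bound. Both arguments are sound. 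Yours has the advantage of delivering the invertibility of the symbol as a byproduct (which the paper only extracts later, in Corollary \ref{lary1} and the group-structure corollaries, via a Schwarz-lemma argument), at the modest cost of the extra verification of $\widetilde{\vp^{-1}}=\tilde{\vp}^{-1}$; the paper's orbit argument stays entirely inside the multiplicity bookkeeping. Your closing remarks on the degenerate cases $\tilde{\vp}(\lambda)\in\{0,\infty\}$ and on constant symbols are apt --- the latter is in fact an edge case the theorem's statement glosses over, since constants do lie in $L(\clq_\theta)$ when $\theta(0)=0$ but are not M\"obius transformations.
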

\begin{proof}
Suppose $\varphi \in L(\clq_\theta)^*$. By Lemma \ref{propo}, $\varphi$ is a M\"obius transformation. Let
\[
\varphi(z)=\dfrac{az+b}{cz+d}.
\]
Fix $\lambda \in \clz(\theta)\setminus \{0\}$, and let $m = mult_\theta (\lambda)$. Now
\[
\dfrac{1}{1-\bar{\lambda}z} \circ \varphi = \dfrac{1}{1-\bar{\lambda}\varphi}
=\dfrac{1}{1-\bar{\lambda}\dfrac{az+b}{cz+d}}
= \dfrac{\alpha z+ \beta}{1-\overline{\dfrac{\overline{a}\lambda-\overline{c}}{-\overline{b} \lambda+\overline{d}} }\cdot z} \in \clq_\theta,
\]
for some scalars $\alpha$ and $\beta$. Then
\begin{equation}\label{rep}
\dfrac{1}{1-\bar{\lambda}\varphi}= \dfrac{\alpha z+ \beta}{1-\overline{\tilde{\varphi}(\lambda)}z}=\gamma+  \dfrac{\delta}{1-\overline{\tilde{\varphi}(\lambda)}z} \in \clq_\theta,
\end{equation}
for some scalars $\delta \neq 0, \gamma \in \mathbb{C}$ (note that if $\delta=0$, then by the above identity, we get $\varphi$ is constant, which is not possible). This shows (in view of the basis functions in $\clq_\theta$ as in \eqref{eqn: basis all}) that $\tilde{\varphi} \in \clz(\theta)$. Moreover, for each $k \in \{1, \dots, m\}$, we have
\[
\dfrac{1}{(1-\bar{\lambda}\varphi)^k} = \dfrac{1}{(1-\bar{\lambda}z)^k}\circ \varphi \in \clq_\theta,
\]
and hence
\begin{equation*}
\left(  \dfrac{\delta}{1-\overline{\tilde{\varphi}(\lambda)}z}\right)^k= \left(\dfrac{1}{1-\bar{\lambda}\varphi}-\gamma\right)^k= \sum_{j=0}^k \dfrac{c_j}{(1-\bar{\lambda}\vp)^j} \in \clq_\theta
\end{equation*}
for some scalars $c_1, \ldots, c_k$. Therefore, $\tilde{\varphi}(\lambda) \in \clz(\theta)$ and
\[
m=mult_\theta(\lambda)\leq mult_\theta (\tilde{\varphi}(\lambda)).
\]
If we set $\tilde{\varphi}^{[0]}:=z$, then, by induction, we have
\begin{equation}\label{eqn:mul}
mult_\theta(\tilde{\varphi}^{[k]}(\lambda)) \leq mult_\theta (\tilde{\varphi}^{[k+1]}(\lambda)),
\end{equation}
for all $k \in \mathbb{Z}_+$. This implies
\[
\tilde{\varphi}^{[k+1]}(\lambda) \in \clz(\theta),
\]
for all $k \in \mathbb{N}$. Since $\theta$ has only finitely many zeros, it follows that $\tilde{\varphi}^{[s]}(\lambda)=\tilde{\varphi}^{[t]}(\lambda)$ for some $s, t \in \mathbb{N}$ with $s<t$. Applying $\tilde{\varphi}^{[-s]}$ to the both sides yields
\[
\tilde{\varphi}^{[t-s]}(\lambda)=\lambda.
\]
Therefore, by \eqref{eqn:mul}, we have
\[
mult_\theta(\lambda)=mult_\theta (\tilde{\varphi}(\lambda)).
\]
Conversely, suppose $\varphi$ is M\"{o}bius transformation of the form $\varphi(z)=\dfrac{az+b}{cz+d}$ with $ad-bc\neq 0$, and assume that $mult_\theta(\lambda)=mult_\theta (\tilde{\varphi}(\lambda))$ for all $\lambda \in \clz(\theta)\setminus \{0\}$. Fix $\lambda \in \clz(\theta)\setminus \{0\}$, and let $m=mult_\theta(\lambda)$. As before,
\begin{equation*}
\dfrac{1}{1-\bar{\lambda}\varphi} = \gamma+ \dfrac{\delta}{1-\overline{\tilde{\varphi}(\lambda)}z},
\end{equation*}
for some scalars $\delta \neq 0$ and $\gamma \in \mathbb{C}$. Since $mult_\theta (\tilde{\varphi}(\lambda))=m$, we have
\begin{equation*}
\dfrac{1}{(1-\overline{\tilde{\varphi}(\lambda)}z)^j} \in \clq_\theta,
\end{equation*}
for all $j=1, \ldots, m$. As $1 \in \clq_\theta$, we also have
\begin{equation*}
\dfrac{1}{(1-\bar{\lambda}z)^k} \circ \varphi=\dfrac{1}{(1-\bar{\lambda}\varphi)^k}=\left(\gamma +\dfrac{\delta}{1-\overline{\tilde{\varphi}(\lambda)}z}\right)^k \in \clq_\theta,
\end{equation*}
for all $k \in \{1, \ldots, m\}$. Thus, $C_\varphi$ maps every basis element of $\clq_\theta$ into $\clq_\theta$, and hence $\varphi \in L(\clq_\theta)^*$. The identity $D(\clq_\theta)^*=L(\clq_\theta)^* \cap \mathcal{S}(\mathbb{D})$  follows trivially. This completes the proof.
\end{proof}

For each $\tilde \varphi$, where $\varphi$ is in the set $L(\clq_\theta)^*$, we have the following result:

\begin{corollary}\label{per}
Let $\theta$ be a finite Blaschke product satisfying $\theta(0)=0$, $\theta'(0)\neq 0$, and $\# (\clz(\theta)\setminus \{0\}) \geq 2$. If $\varphi \in L(\clq_\theta)^*$, then $\tilde{\varphi}$ is a permutation on
$\clz(\theta)\setminus \{0\}$.
\end{corollary}
\begin{proof}
Fix $\lambda \in A:=\clz(\theta) \setminus \{0\}$. There exist scalars $\delta \neq 0$ and $\gamma \in \mathbb{C}$ such that
\begin{equation*}
\dfrac{1}{1-\bar{\lambda}\varphi}=\gamma+  \dfrac{\delta}{1-\overline{\tilde{\varphi}(\lambda)}z}
\end{equation*}
and $\tilde{\varphi}(\lambda) \in \clz(\theta)$. If $\tilde{\varphi}(\lambda)=0$, then this would imply $\varphi$ is constant, which is not possible. Thus, $\tilde{\varphi}(\lambda) \in A$. Since $\tilde{\varphi}$ is M\"obius (and in particular, injective) map, it defines a permutation on $A$.
\end{proof}

Theorem \ref{lem2.7} can be equivalently reformulated as the following result.

\begin{corollary}\label{inv}
Let $\theta$ be a finite Blaschke product satisfying $\theta(0)=0$, $\theta'(0)\neq 0$, and $\# (\clz(\theta)\setminus \{0\}) \geq 2$. Then $\varphi \in L(\clq_\theta)^*$  if and only if $\varphi$ is a M\"obius transformation and
\[
mult_\theta(\lambda)=mult_\theta(\tilde{\varphi}^{-1}(\lambda))=mult_\theta\left(\frac{\overline{c}+\overline{d}\lambda}{\overline{a}+\overline{b}\lambda} \right),
\]
for all $\lambda \in \clz(\theta)\setminus \{0\}$.
\end{corollary}

We now present a simpler and substantially improved version of Lemma 2.10 in \cite{Jav}.

\begin{theorem}\label{bm}
Let $\theta$ be a finite Blaschke product. Suppose    $\#(\clz(\theta)\setminus \{0\}) \geq 2$ and $mult_\theta(0)\geq 2$. Then $\varphi \in L(\clq_\theta)^*$ if and only if $\varphi$ is an affine transformation
\begin{equation*}
\varphi(z)=\alpha z+\beta,
\end{equation*}
for some scalars $\alpha(\neq 0)$ and $\beta$ such that
\[
mult_\theta(\lambda)=mult_\theta(\tilde{\varphi}(\lambda)) \mbox{~ for all~ } \lambda \in \clz(\theta)\setminus \{0\}.
\]
\end{theorem}
\begin{proof}
Fix $\varphi \in L(\clq_\theta)^*$. By Lemma \ref{propo}, Corollary \ref{per}, and an argument similar to the proof of Theorem \ref{lem2.7}, we obtain that
\[
\varphi=\frac{az+b}{cz+d},
\]
that is, $\varphi$ is a M\"obius transformation. Moreover, $\tilde{\varphi}$ is a permutation on
$\clz(\theta)\setminus \{0\}$, and
\[
mult_\theta(\lambda)=mult_\theta(\tilde{\varphi}(\lambda)),
\]
for all $\lambda \in \clz(\theta)\setminus \{0\}$. As $z\in \clq_\theta$, we  have $\varphi \in \clq_\theta$, that is,
\[
\varphi=\frac{az+b}{cz+d}= \gamma+\frac{\delta}{1-\overline{\left(
-\bar{c}/\bar{d}\,\right)} z} \in \clq_\theta.
\]
This implies $\tilde{\varphi}(0)=-\bar{c}/\bar{d}\in \clz(\theta)$. Since $\tilde{\varphi}$ is injective map on $\mathbb{C}_\infty$ and a permutation on $\clz(\theta)\setminus \{0\}$, it follows that $\tilde{\varphi}(0)=0$. Hence $c=0$, and therefore $\varphi$ is an affine map. This completes the proof of the forward implication. For the converse, the verification is straightforward: for each $\lambda \in \clz(\theta)\setminus \{0\}$, there exist a constant $\delta$ such that
\begin{equation*}
\dfrac{1}{1-\bar{\lambda}\varphi}= \dfrac{\delta}{1-\overline{\tilde{\varphi}(\lambda)}z},
\end{equation*}
which completes the proof of the theorem.
\end{proof}

Since $\tilde{\varphi}$ is a permutation on $\clz(\theta)\setminus \{0\}$, Theorem \ref{bm} can be reformulated in the following equivalent form:

\begin{corollary}\label{inv1}
Let $\theta$ be a finite Blaschke product satisfying $\theta(0)=0$, $mult_\theta(0)\geq 2$, and let $\# (\clz(\theta)\setminus \{0\}) \geq 2$. Then $\varphi \in L(\clq_\theta)^*$ if and only if $\varphi(z) = az+b$ with
\[
mult_\theta(\lambda)=mult_\theta(\tilde{\varphi}^{-1}(\lambda))=mult_\theta\left(\frac{\lambda}{\overline{a}+\overline{b}\lambda} \right),
\]
for all $\lambda \in \clz(\theta)\setminus \{0\}$.
\end{corollary}

The elements of $D(\clq_\theta)^*$ are special disc automorphisms of the unit disc:

\begin{corollary}\label{lary1}
Let $\theta$ be a finite Blaschke product satisfying $\theta(0)=0$, and $\# (\clz(\theta)\setminus \{0\}) \geq 2$, and let $\varphi \in D(\clq_\theta)^*$. Then $\varphi$ is a rational elliptic automorphism of $\D$. That is, there exists $m \in \mathbb{N}$ such that  $\varphi^{[m]}=z$.
\end{corollary}
\begin{proof}
Pick $\varphi \in D(\clq_\theta)^*$. As $D(\clq_\theta)^*\subseteq L(\clq_\theta)^*$, Corollary \ref{per} implies that $\tilde{\varphi}$ is a permutation on $A=\clz(\theta)\setminus \{0\}$. That is, $\tilde{\varphi}\in S_N$, where
\[
N:=\#A.
\]
There exist $m\in \mathbb{N}$ (for instance, $m=N!$) such that
\[
\tilde{\varphi}^{[m]}=z \mbox{~ on ~} A.
\]
In other words, every point of $A$ is a fixed point of $\tilde{\varphi}^{[m]}$. As a consequence of the Schwarz lemma, we conclude that
\[
\tilde{\varphi}^{[m]}=z \mbox{~ on ~} \mathbb{D},
\]
since a holomorphic self-map of $\D$ with at least two fixed points must be the identity. By applying part (ii) of Proposition \ref{pro1}, we obtain that $\varphi^{[m]}=z$. Thus,
\begin{equation*}
\varphi \circ \varphi^{[m-1]}= \varphi^{[m-1]} \circ \varphi=z.
\end{equation*}
That is,  $\varphi :\mathbb{D} \to \mathbb{D}$ is bijective. Hence $\varphi$ is an automorphism of $\mathbb{D}$.
\end{proof}

We can now also draw conclusions about the group structures of the sets $L(\clq_\theta)^*$ and $D(\clq_\theta)^*$:

\begin{corollary}\label{cor: general group}
Let $\theta$ be a finite Blaschke product satisfying $\theta(0)=0$, and $\# (\clz(\theta)\setminus \{0\}) \geq 2$. Then $L(\clq_\theta)^*$ and $D(\clq_\theta)^*$ are groups under composition.
\end{corollary}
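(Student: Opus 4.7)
The plan is to leverage Corollary \ref{lary1} (together with the analogous statement for the higher-multiplicity case discussed after Theorem \ref{lem2.7}), which asserts that every non-constant $\varphi \in D(\clq_\theta)$ is an automorphism of $\mathbb{D}$ of finite order. Once this structural fact is in hand, verifying the group axioms for $D(\clq_\theta)^*$ becomes routine. Associativity is automatic for function composition, and the identity $z$ lies in $D(\clq_\theta)^*$ since it is non-constant and trivially satisfies $C_z \clq_\theta \subseteq \clq_\theta$.

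For closure, I would take $\varphi_1, \varphi_2 \in D(\clq_\theta)^*$ and use that $D(\clq_\theta)$ is a semigroup under composition to obtain $\varphi_1 \circ \varphi_2 \in D(\clq_\theta)$. By Corollary \ref{lary1}, both $\varphi_1$ and $\varphi_2$ are elements of $\mathrm{Aut}(\mathbb{D})$, and the composition of two automorphisms of $\mathbb{D}$ is again an automorphism, and in particular non-constant. Hence $\varphi_1 \circ \varphi_2 \in D(\clq_\theta)^*$.

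For inverses, given $\varphi \in D(\clq_\theta)^*$, part (2) of Corollary \ref{lary1} provides $m \in \mathbb{N}$ with $\varphi^{[m]} = z$. If $m=1$, the function $\varphi$ is its own compositional inverse. Otherwise, $\varphi^{[m-1]}$ is the compositional inverse of $\varphi$; since $D(\clq_\theta)$ is a semigroup containing $\varphi$, iterating yields $\varphi^{[m-1]} \in D(\clq_\theta)$, and since $\varphi \in \mathrm{Aut}(\mathbb{D})$ forces $\varphi^{[m-1]} \in \mathrm{Aut}(\mathbb{D})$, it is non-constant. Thus $\varphi^{[m-1]} \in D(\clq_\theta)^*$.

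There is no genuine obstacle here; the entire burden of the argument has already been paid in Corollary \ref{lary1}, which extracts both the automorphism property and the finite-order property from the invariance hypothesis. The one point that merits a brief comment in the write-up is that Corollary \ref{lary1} is stated under $\theta'(0)\neq 0$, so one should remark that the same conclusion holds when $0$ is a zero of $\theta$ of higher multiplicity, using the analogue of Theorem \ref{lem2.7} recorded after that theorem; with this noted, the proof reduces to the three bookkeeping steps above.
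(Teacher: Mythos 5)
Your proposal is correct and follows essentially the same route as the paper: the paper's proof likewise observes that $D(\clq_\theta)$ is closed under composition and then invokes Corollary \ref{lary1} to produce $\varphi^{-1}=\varphi^{[m-1]}\in D(\clq_\theta)^*$. One small note: Corollary \ref{lary1} as stated in the paper does not assume $\theta'(0)\neq 0$ (it already covers the higher-multiplicity case via the remark following Theorem \ref{lem2.7}), so your cautionary aside, while sensible, is not strictly needed.
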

\begin{proof}
Let $\varphi \in L(\clq_\theta)^*$. By Corollaries \ref{inv} and \ref{inv1}, we have
\[
mult_\theta(\lambda)=mult_\theta(\tilde{\varphi}^{-1}(\lambda)),
\]
for all $\lambda \in \clz(\theta)\setminus \{0\}$, which can be rewritten as
\[
mult_\theta(\lambda)=mult_\theta(\widetilde{\varphi^{-1}}(\lambda)),
\]
for all $\lambda \in \clz(\theta)\setminus \{0\}$. By Theorems \ref{lem2.7} and \ref{bm}, this yields $\varphi^{-1} \in L(\clq_\theta)^*$, and consequently, $L(\clq_\theta)^*$ is a group under composition. For the group structure of $D(\clq_\theta)^*$, pick $\varphi \in D(\clq_\theta)^*$. Then $\varphi^{[k]} \in D(\clq_\theta)^*$ for all $k \in \mathbb{N}$. By Corollary \ref{lary1}, there exists $n \in \mathbb{N}$ such that
\[
\varphi^{-1}= \varphi^{[n-1]}\in D(\clq_\theta)^*,
\]
which implies that $D(\clq_\theta)^*$ is a group under composition.
\end{proof}

In the latter part of the following section, we will discuss concrete examples of finite Blaschke products that satisfy the conditions of the above corollary. In fact, we will focus on those finite Blaschke products $\theta$ that additionally satisfy the condition
\[
\# (\clz(\theta)\setminus \{0\}) = 2.
\]

\section{Examples}\label{sec: example}

This section presents some concrete results in the form of examples. We provide two sets of example-based results, both stemming from two incorrect claims in \cite{Jav}.

The first focus is an example that was presented as an illustration of Theorem 2.4 in \cite{Jav}, and which appeared immediately after Corollary 2.4 in the same reference. In this section, we point out that the conclusion drawn in that example is not necessarily valid in all cases. We also correct the error and provide a precise statement that accurately reflects the situation illustrated by the example in \cite{Jav}.

We recall the setting of \cite{Jav}: Fix distinct points $\{\lambda_1, \ldots, \lambda_s\}$ from $\mathbb{D}\setminus \{0\}$ and fix a natural number $n (\geq 2)$. Set
\[
a=e^{\frac{2\pi i}{n}},
\]
and define
\[
\theta = \prod_{j=1}^s \left(\prod_{k=1}^nb_{{a^k\lambda_j}}\right)^{m_j}.
\]
In this case, the statement following Corollary 2.4 in \cite{Jav} asserts that
\[
D(\clq_\theta)=\{z, az, \dots, a^{n-1}z\}.
\]
We first point out that this conclusion is not correct: Consider the case with $n=2$, and
\[
a=-1.
\]
Choose
\[
s=2, \lambda_1= \frac{1}{2}, \lambda_2= \frac{i}{2},
\]
along with $m_1=m_2=1$. Then
\[
\theta = b_{-\frac{1}{2}} b_{\frac{1}{2}} b_{-\frac{i}{2}} b_{\frac{i}{2}},
\]
and according to \cite{Jav}, one concludes that
\begin{equation}\label{eqn: Javad wrong}
D(\clq_\theta)=\{z, -z\}.
\end{equation}
We apply the results proved in this paper to show that this conclusion is incorrect. Note that
\[
\clz(\theta)= \left\{\pm \frac{1}{2}, \pm \frac{i}{2}\right\},
\]
and all zeros have multiplicity one. In particular, we have
\[
mult_\theta (\lambda)=mult_\theta(i\lambda),
\]
for all $\lambda \in \clz(\theta)$. By part (2) of Corollary \ref{coro1}, we have $iz \in D(\clq_\theta)$, and hence
\[
\langle iz \rangle =\{\pm z, \pm iz\} \subseteq D(\clq_\theta).
\]
On the other hand, by Theorem \ref{thm3}, we have $D(\clq_\theta) \subseteq \langle iz \rangle$, and consequently
\[
D(\clq_\theta)=\{\pm z, \pm iz\}.
\]
This is clearly different from the identity claimed in \eqref{eqn: Javad wrong}. In this case, observe that all the zeros are simple. Hence, even if one assumes that the zeros of $\theta$ are all distinct, claim \eqref{eqn: Javad wrong} is false. However, there is a fix to this claim, which we state as follows:

\begin{theorem}
Let $n\geq 2$ be a natural number, and let $\{\lambda_1, \ldots, \lambda_s\} \subseteq \mathbb{D}\setminus \{0\}$. Set
\[
\theta = \prod_{j=1}^s \left(\prod_{k=1}^nb_{{a^k\lambda_j}}\right)^{m_j},
\]
where
\[
a=e^{\frac{2\pi i}{n}}.
\]
Assume that $\{|\lambda_1|, \ldots, |\lambda_s|\}$ is a set of distinct real numbers. Then
\[
D(\clq_\theta)=\langle az \rangle.
\]
\end{theorem}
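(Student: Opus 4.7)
The plan is to prove the two inclusions $\langle az\rangle \subseteq D(\clq_\theta)$ and $D(\clq_\theta) \subseteq \langle az\rangle$ using the rotation characterization from Theorem \ref{thm1} and Corollary \ref{coro1}, with the distinct-moduli hypothesis supplying the rigidity that was missing in the original assertion. First I would identify $\clz(\theta)$ precisely: since $a$ is a primitive $n$-th root of unity and $|a^k\lambda_j|=|\lambda_j|$, the assumption that the $|\lambda_j|$ are distinct guarantees that the $ns$ points $a^k\lambda_j$ are pairwise distinct (two zeros with the same modulus must come from the same $j$, and within a fixed $j$ the points $\{a^k\lambda_j\}_{k=1}^n$ are distinct because $a$ has order $n$). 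Hence $\clz(\theta)$ consists of these $ns$ points, with $mult_\theta(a^k\lambda_j)=m_j$, and $\theta(0)\neq 0$.

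For the inclusion $\langle az\rangle \subseteq D(\clq_\theta)$, multiplication by $a$ permutes each orbit $\{a\lambda_j,\dots,a^n\lambda_j\}$ and preserves the common multiplicity $m_j$, so $mult_\theta(\lambda)=mult_\theta(a\lambda)$ for every $\lambda\in\clz(\theta)$. Part (2) of Corollary \ref{coro1} then yields $az\in D(\clq_\theta)$, and since $D(\clq_\theta)$ is a group by Lemma \ref{thm3}, the entire cyclic group $\langle az\rangle$ sits inside $D(\clq_\theta)$.

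For the reverse inclusion, let $\varphi\in D(\clq_\theta)$. By Theorem \ref{thm1} there exists $\omega\in\T$ with $\varphi=\omega z$ and $\bar\omega\lambda\in\clz(\theta)$ for every $\lambda\in\clz(\theta)$. Applying this to $\lambda=\lambda_1$, the point $\bar\omega\lambda_1$ lies in $\clz(\theta)$ with modulus $|\lambda_1|$; this is where the distinct-moduli hypothesis does the essential work, since among all zeros of $\theta$ only the $n$ points $\{a^k\lambda_1\}_{k=1}^n$ have modulus $|\lambda_1|$. Hence $\bar\omega\lambda_1=a^k\lambda_1$ for some $k\in\{1,\dots,n\}$, forcing $\omega\in\{1,a,\dots,a^{n-1}\}$ and $\varphi\in\langle az\rangle$.

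The main obstacle, and the reason the original claim in \cite{Jav} needed correction, is precisely this moduli hypothesis: without it, different orbits $\{a^k\lambda_i\}$ and $\{a^k\lambda_j\}$ could share a circle and interlock to produce extra rotational symmetries of $\clz(\theta)$ that preserve multiplicities, enlarging $D(\clq_\theta)$ beyond $\langle az\rangle$, as the $\pm\tfrac12,\pm\tfrac{i}{2}$ example earlier in this section illustrates. The distinct-moduli assumption isolates each orbit on its own circle, so that the multiplicity-preserving rotational symmetries of $\clz(\theta)$ reduce to exactly $\langle a\rangle$.
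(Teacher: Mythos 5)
Your proposal is correct and follows essentially the same route as the paper: the forward inclusion via the orbit structure and part (2) of Corollary \ref{coro1} together with the group property, and the reverse inclusion by applying the rotation characterization to $\lambda_1$ and using the distinct-moduli hypothesis to force $\omega$ into $\{1,a,\dots,a^{n-1}\}$. The only (harmless) differences are that you explicitly verify the $ns$ zeros are pairwise distinct and invoke Theorem \ref{thm1} with $\bar\omega$ where the paper cites Theorem \ref{thm2} and Corollary \ref{coro1} with $\omega$; both yield the same conclusion since $\langle a\rangle$ is closed under conjugation.
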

\begin{proof}
By definition of $\theta$, we have
\[
\clz(\theta)=\bigcup_{j=1}^s \left\{\lambda_j, a\lambda_j, a^2 \lambda_j, \dots, a^{n-1}\lambda_j\right\},
\]
with the property that
\[
mult_\theta(a^k \lambda_j)=m_j,
\]
for all $k=1, \ldots, n$. Therefore, $mult_\theta(\lambda)=mult_\theta(a\lambda)$ for all $\lambda \in \clz(\theta)$. Thus, by part (2) of  Corollary \ref{coro1}, this implies that $az \in D(\clq_\theta)$, and hence
\[
\langle az \rangle \subseteq D(\clq_\theta),
\]
as $D(\clq_\theta)$ is a group. To prove the reverse inclusion, we pick $\varphi \in D(\clq_\theta)$. We know by Theorem \ref{thm2} that
\[
\varphi = \omega z,
\]
for some $\omega \in \T$. Since $\lambda_1 \in\clz(\theta)$, again, by part (2) of Corollary \ref{coro1}, $\omega \lambda_1 \in \clz(\theta)$ and $|\omega \lambda_1|=|\lambda_1|$. As $|\lambda_p| \neq |\lambda_q|$ for all $p \neq q$, it follows that
\[
\omega \lambda_1 \in \left\{\lambda_1, a\lambda_1, \dots, a^{n-1}\lambda_1\right\},
\]
and hence, there exists $k \in \left\{0, 1, \ldots, n-1\right\}$ such that
\[
\omega=a^k.
\]
This proves that $D(\clq_\theta) \subseteq \langle az \rangle$, and consequently, we have $D(\clq_\theta) = \langle az \rangle$.
\end{proof}

The example at the beginning of this section shows that the additional assumption $|\lambda_i| \neq |\lambda_j|$ for all $i \neq j$ (as opposed to $\lambda_i \neq \lambda_j$ for all $i \neq j$) cannot be omitted from the theorem. There are other ways to obtain a similar conclusion. For instance, we have the following: Let $n\geq 2$ be a natural number, and let $\{\lambda_1, \ldots, \lambda_s\} \subseteq \mathbb{D}\setminus \{0\}$ be a set of distinct scalars. Set
\[
\theta = \prod_{j=1}^s \left(\prod_{k=1}^nb_{{a^k\lambda_j}}\right)^{m_j},
\]
where $a=e^{\frac{2\pi i}{n}}$. Assume that $m_i \neq m_j$ for all $i \neq j$. Then
\[
D(\clq_\theta)=\langle az \rangle.
\]
The proof of $\langle az \rangle \subseteq D(\clq_\theta)$ is the same. As above, pick $\varphi = \omega z \in D(\clq_\theta)$ for some $\omega \in \T$. As $\lambda_1 \in \clz(\theta)$ and $\omega z \in D(\clq_\theta)$, it follows from part (2) of Corollary \ref{coro1} that $\omega \lambda_1 \in \left\{\lambda_1, a\lambda_1, \dots, a^{n-1}\lambda_1\right\}$. The remainder of the proof proceeds as before.

Once again, our example shows that the additional condition on multiplicities cannot, in general, be omitted unless the $|\lambda_j|$'s are all distinct.

We now turn to the claim made in \cite[Theorem 2.8]{Jav}. We note that this claim is incorrect. To that end, we consider the simplest example of a finite Blaschke product $\theta$ satisfying the conditions $\theta(0)=0$, $\theta'(0)\neq 0$, and
\[
\# (\clz(\theta)\setminus \{0\}) = 2.
\]
The following result, in particular, highlights a contrast between the sets $L(\clq_\theta)^*$ and $D(\clq_\theta)^*$.

\begin{theorem}\label{thm: 2 zero L uncount}
Let $\alpha$ and $\beta$ be two distinct nonzero elements of $\mathbb{D}$ and $m,n\in \mathbb{N}$. Let $\tilde{b}_{\alpha,\beta}$ denote the unique disc automorphism that interchanges $\alpha$
and $\beta$. Define
\[
\theta=zb_\alpha^m b_\beta^n.
\]
Then the following two properties hold:
\begin{enumerate}
\item $L(\clq_\theta)^*$ is an uncountable group.
\item  $D(\clq_\theta)^* =
\begin{cases}
\{z\} & \text{ if } m\neq n \\
\{z, b_{\alpha,\beta}\} & \text{otherwise.}
\end{cases}$
\end{enumerate}
\end{theorem}
\begin{proof}
Fix $\gamma\in \mathbb{C}$, distinct from $\alpha$ and $\beta$. Then there exists a unique M\"obius transformation $f_\gamma$ such that
\[
f_\gamma(0)=\gamma, f_\gamma(\alpha)=\alpha, \text{ and } f_\gamma(\beta)=\beta.
\]
Using the coefficients of $f_\gamma$, we can uniquely define $\varphi_\gamma$ such that
\[
\tilde{\varphi_\gamma}=f_\gamma.
\]
Since $\tilde{\varphi}_\gamma$ fixes every element of $\clz(\theta)\setminus \{0\}$, by Theorem \ref{lem2.7}, each $\varphi_\gamma$ belongs to $L(\clq_\theta)^*$, and hence the set $L(\clq_\theta)^*$ is uncountable. Also, by Corollary \ref{cor: general group}, it follows that $L(\clq_\theta)^*$ is a group.

\noindent For part (2), pick $\varphi\in D(\clq_\theta)^*$. First, we assume that $m\neq n$. Corollary \ref{lary1} applies in particular to this case and asserts that $\tilde{\vp}^{[2]} = z$  (see also the proof of Corollary \ref{lary1}). That is, $\tilde{\varphi} \in Aut(\D)$ and is its own inverse. Then $\tilde{\varphi} \in Aut(\D)$ acts as a permutation on the set $\{\alpha,\beta\}$. Since $m\neq n$, Theorem \ref{lem2.7} implies that $\tilde{\varphi}$ fixes both $\alpha$ and $\beta$. However, a disc automorphism with two fixed points in $\D$ must be the identity. Hence, $\tilde{\varphi}$, and therefore $\varphi$, is identity. Next, assume that $m=n$. We assume that $\varphi\in D(\clq_\theta)^*$ and not the identity map. Then
\[
\tilde{\varphi}(\alpha)=\beta \text{ and } \tilde{\varphi}(\beta)=\alpha.
\]
However, there exists a unique disc automorphism with these properties, which we denote by $\tilde{b}_{\alpha,\beta}$. Thus,
\[
\varphi=b_{\alpha,\beta}.
\]
Therefore, in this case, $D(\clq_\theta)^*$ is a cyclic group of order $2$.
\end{proof}

The above result contradicts the conclusion of \cite[Theorem 2.8]{Jav}, which claims that even the larger set $L(\clq_\theta)$ is a finite cyclic group. However, the claim in \cite[Theorem 2.8]{Jav} is stated without proof. The claim in \cite[Theorem 2.8]{Jav}, however, admits a partial correction when the zero set is sufficiently large. The revised statement is as follows:

\begin{corollary}
Let $\theta$ be a finite Blaschke product. Suppose $\theta(0)=0$ and
\[
\#(\clz(\theta)\setminus \{0\}) \geq 3.
\]
Then the following two properties hold:
\begin{itemize}
\item[(a)] $L(\clq_\theta)^*$ and $D(\clq_\theta)^*$  are finite groups.
\item[(b)] If $\varphi\in L(\clq_\theta)^*$, then there exists $m \in \mathbb{N}$ such that  $\varphi^{[m]}=z$.
\end{itemize}
\end{corollary}
\begin{proof}
Set
\[
A:=\clz(\theta) \setminus \{0\}.
\]
Pick $\varphi \in L(\clq_\theta)^*$. By Corollary \ref{per}, $\tilde{\varphi}$ is a permutation on $A$, that is, $\tilde{\varphi}\in S_N$, where
\[
N=\#A.
\]
Since M\"obius maps are uniquely determined by the images of three distinct points, for each permutation $\sigma$ on $A$, there exists at most one M\"obius transformation $\tilde{\varphi}_\sigma$ such that
\[
\tilde{\varphi}_\sigma=\sigma \text{ on } A.
\]
Thus $L(\clq_\theta)^*$ is a finite set with at most $N(N-1)(N-2)$ elements. By Corollary \ref{lary1}, we know that the elements of $D(\clq_\theta)^*$ are disc automorphisms. Since a disc automorphism is uniquely determined by the images of two distinct points, $D(\clq_\theta)^*$ is a finite set with at most $N(N-1)$ elements. Hence, the proof of part (a) follows.

\noindent For (b), fix $\varphi \in L(\clq_\theta)^*$. By Corollary \ref{lary1}, there exist $m\in \mathbb{N}$ such that
\[
\tilde{\varphi}^{[m]}=z  \text{ on } A.
\]
Therefore the M\"obius map $\tilde{\varphi}^{[m]}$ has at least $\# A (\geq 3)$ fixed points, which implies that
\[
\tilde{\varphi}^{[m]}=z \mbox{~ on ~} \mathbb{C}_\infty.
\]
Then, by part (ii) of Lemma \ref{pro1}, it follows that ${\varphi}^{[m]}=z$.
\end{proof}

\begin{remark}
The conclusion of the above corollary also holds if we assume that $\#(\clz(\theta)\setminus \{0\}) \geq 2$ and $mult_\theta(0)\geq 2$. This is because $\tilde{\varphi}$ is a permutation of $\clz(\theta) \setminus \{0\}$ and $\tilde{\varphi}$ fixes the origin.
\end{remark}

\section{Concluding remarks}\label{sec: concluding}

In this concluding section, we offer some additional remarks on finite-dimensional model spaces that are invariant under composition operators. The following result, while elementary, is quite intriguing. Moreover, the same conclusion holds in the context of the space $L(\clq_\theta)$ (that is, for $C_\vp$ with $\vp \in L(\clq_\theta)$):

\begin{theorem}\label{theta2}
Let $\theta$ be a finite Blaschke product that does not vanish at the origin, and let $\vp \in \cls(\D)$. Then
\[
C_{\varphi}(\clq_\theta) \subseteq \clq_\theta,
\]
if and only if
\[
C_{\varphi}(\clq_\theta) = \clq_\theta.
\]
\end{theorem}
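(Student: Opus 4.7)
The plan rests on the finite-dimensionality of $\clq_\theta$: any linear self-map of a finite-dimensional vector space is surjective as soon as it is injective. So the proposal is to show that the restriction $C_\varphi|_{\clq_\theta}$ is injective whenever $C_\varphi(\clq_\theta)\subseteq \clq_\theta$, and then appeal to the rank-nullity count.

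First I would dispose of the constant case. Since $\theta(0)\neq 0$, the space $\clq_\theta$ contains no nonzero constants (by the standard orthogonality $\langle c, \theta h\rangle = c\,\overline{\theta(0)h(0)}$). If $\varphi\equiv c$ were constant, then $C_\varphi f = f(c)\cdot 1$ would land in the constants, so the containment $C_\varphi(\clq_\theta)\subseteq \clq_\theta$ forces $f(c)=0$ for every $f\in \clq_\theta$. This contradicts the reproducing-kernel property of $\clq_\theta$ at $c\in \D$, since the kernel $k_c^\theta(z)=\frac{1-\overline{\theta(c)}\theta(z)}{1-\bar c z}$ is nonzero (as $|\theta(c)|<1$). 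Alternatively, and more quickly, Theorem \ref{thm1} already tells us that every $\varphi\in D(\clq_\theta)$ is of the form $\varphi(z)=\alpha z$ with $\alpha\in\T$, which is automatically non-constant.

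Next, with $\varphi$ non-constant, I would show $C_\varphi|_{\clq_\theta}$ is injective. If $f\in \clq_\theta$ satisfies $C_\varphi f = f\circ\varphi = 0$, then $f$ vanishes on the image $\varphi(\D)$, which is open by the open mapping theorem applied to the non-constant analytic map $\varphi$. The identity theorem then forces $f\equiv 0$. Hence $\ker(C_\varphi|_{\clq_\theta})=\{0\}$.

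Finally, since $\dim \clq_\theta = \deg\theta <\infty$, the injective linear operator $C_\varphi|_{\clq_\theta}\colon \clq_\theta\to \clq_\theta$ is also surjective, giving $C_\varphi(\clq_\theta)=\clq_\theta$. The reverse implication is trivial. I do not anticipate a serious obstacle here: the only subtle point is ruling out constant symbols, and that is handled either by the reproducing-kernel argument above or simply by invoking Theorem \ref{thm1}. The same proof goes through verbatim for $\varphi\in L(\clq_\theta)$ once one knows (from Proposition \ref{affine} or Lemma \ref{lem: Javad}) that such a $\varphi$ is a non-constant affine map.
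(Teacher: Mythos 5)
Your proof is correct, and its skeleton --- injectivity of $C_\varphi|_{\clq_\theta}$ plus the rank--nullity theorem on the finite-dimensional space $\clq_\theta$ --- is exactly the paper's. The one place you diverge is in how injectivity is established. The paper invokes Theorem \ref{thm1} to conclude that $\varphi$ is a rotation, and then cancels $\varphi$ from $f\circ\varphi=g\circ\varphi$ using the fact that a rotation is a bijection of $\D$. You instead rule out constant symbols directly (via the reproducing kernel $k_c^\theta$ and the observation that $\clq_\theta$ contains no nonzero constants when $\theta(0)\neq 0$) and then get injectivity for any non-constant $\varphi$ from the open mapping theorem and the identity theorem. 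Your route is more elementary and more robust: it does not use the classification of $D(\clq_\theta)$ at all, so it would apply verbatim to any finite-dimensional space of holomorphic functions on $\D$ containing no nonzero constants, and it transfers immediately to the $L(\clq_\theta)$ setting as you note. What the paper's version buys in exchange is brevity, since Theorem \ref{thm1} is already available at that point in the text. Both arguments are complete; the only detail worth double-checking in yours is the strict inequality $|\theta(c)|<1$ for $c\in\D$ (true for any finite Blaschke product by the maximum modulus principle), which guarantees $k_c^\theta(c)>0$.
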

\begin{proof}
Suppose $C_{\varphi}(\clq_\theta) \subseteq \clq_\theta$. By Theorem \ref{thm1}, we know that $\vp$ is a rotation. Pick $f$ and $g$ from $\clq_\theta$ so that $C_{\varphi} f= C_{\varphi} g$, that is,
\[
f\circ \varphi= g \circ \varphi.
\]
As $\varphi$ is rotation, it follows that
\[
f \equiv g,
\]
and hence $C_\varphi$ is injective. Since $\clq_\theta$ is a finite-dimensional space, the rank-nullity theorem implies that $C_\vp$ is surjective.
\end{proof}

The situation changes when we assume that $\theta(0)=0$: For $\vp \in \cls(\D)$, assume that $C_{\varphi} (\clq_\theta)\subseteq \clq_\theta$. Then
\[
C_{\varphi} (\clq_\theta)=\clq_\theta,
\]
if and only if $\theta$ is a rotation. Indeed, if $\theta = z$, then $\clq_\theta$ is one dimensional, and $\clq_\theta= \mathbb{C}$. Then
\[
1 \circ \varphi= 1,
\]
for all $\vp \in \cls(\D)$, and hence $C_{\varphi}(\clq_\theta) = \clq_\theta$. For the converse direction, assume that $\theta$ is not a rotation. Given that $\theta(0)=0$, there exists a non-constant finite Blaschke product $B$ such that $\theta=zB$, which implies that
\[
\mathbb{C} \subsetneqq \clq_\theta.
\]
For a constant map $\varphi \in \cls(\D)$, as
\begin{equation*}
C_{\varphi} (\clq_\theta)=\mathbb{C}\subsetneq \clq_\theta,
\end{equation*}
we conclude that $C_{\varphi} (\clq_\theta) \neq \clq_\theta$. This proves that $\theta$ is a rotation implies $C_{\varphi} (\clq_\theta)=\clq_\theta$.

In particular, for $\theta(0)=0$, we have the following:
\begin{enumerate}
\item If $\theta$ is a rotation, then $C_{\varphi} (\clq_\theta)=\clq_\theta$ for all $\vp \in \cls(\D)$.
\item If $\theta$ is not a rotation, then there exists $\vp \in \cls(\D)$ such that $C_{\varphi} (\clq_\theta)\subseteq \clq_\theta$ and $C_{\varphi} (\clq_\theta)\neq \clq_\theta$.
\end{enumerate}

In \cite{Inv2015} (see also \cite{ISCO}), it was pointed out that, given a composition operator, there always exists a shift-invariant subspace that is invariant under the composition operator. A similar question can be posed for model spaces (that is, backward shift-invariant subspaces). The following result addresses this issue explicitly.

\begin{remark}
If $\theta=z$, then $\clq_\theta=\mathbb{C}$ is trivially invariant under every composition operator. Next, consider an analytic self-map $\varphi$ of $\mathbb{D}$ that is not a M\"obius map (for instance, $\varphi=z^2$). Since $\varphi$ is neither a rotation nor a  M\"obius map, it follows from the results of this paper that $\varphi\notin D(\clq_\theta)$ for all $\theta$ except when $\theta$ is a rotation. That is, $\clq_\theta$ is not invariant under $C_\varphi$ for any $\theta$ other than rotation. Hence, there exist many composition operators for which no non-trivial model space is invariant. That is, except for $\mathbb{C}$, all other model spaces fail to be invariant under any composition operator induced by a non-M\"obius maps.
\end{remark}

The results of this paper once again suggest that the theory of composition operators even when restricted to finite-dimensional model spaces exhibits significant variation from case to case. In particular, the results differ substantially between the cases of finite Blaschke products that vanish at the origin and those that do not. Even within these broad categories, the behavior further varies depending on specific subcases.

The following is a summary of some of the main results concerning finite-dimensional model spaces. These results are established in this paper. However, we reiterate that some of them are derived from \cite{Jav} and are presented here either verbatim (see also \cite{Mut}) or in a modified, corrected, or expanded form. In the following, $\theta$ denotes a finite Blaschke product, and $\clq_\theta$ refers to the corresponding finite-dimensional model space, with
\[
\dim \clq_\theta = \deg \theta.
\]
The first row of the following table specifies the conditions imposed on the function $\theta$, while the remaining rows outline the corresponding properties under each condition:

\begin{center}
\begin{tabular}{ |m{.5cm}| m{6cm}| m{8cm} | }
\hline
& $\theta(0)\neq 0$ & $\theta(0)=0$, $\theta\neq $ rotation \\
\hline
\hline
1 & $L(\clq_\theta)$ is a finite set & $L(\clq_\theta) \supseteq \mathbb{C}$, and hence uncountable \\
\hline
2 & $L(\clq_\theta)$ is a finite cyclic group & $L(\clq_\theta) \setminus \mathbb{C}$ is always a group\\
\hline
3 & Every element in $L(\clq_\theta)$ is of the form $\varphi(z)=az + b$ & Every element in $L(\clq_\theta)$ is either a constant or $\frac{az+b}{cz+d}$. \\
\hline
4 & In some cases, $L(\clq_\theta)=\{z\}$ (cf. Theorem \ref{thm - triv L}) & Always $L(\clq_\theta)\neq \{z\}$  \\
\hline
5 & $C_\vp (\clq_\theta)\subseteq \clq_\theta \Rightarrow C_\vp(\clq_\theta)=\clq_\theta$  &  $C_\vp(\clq_\theta)\subseteq \clq_\theta$ $\Rightarrow$
$C_\vp(\clq_\theta)\subsetneqq \clq_\theta$ \\
\hline
\end{tabular}
\end{center}

\bigskip

These results, along with the techniques used to obtain them, raise several questions of general interest. We conclude the paper by highlighting two particularly intriguing ones: The first question concerns the classification of finite-dimensional spaces that are invariant under composition operators. Specifically, given a finite-dimensional subspace $\cls \subseteq H^2(\D)$, classify all analytic self-maps $\varphi \in \cls(\D)$ such that
\[
C_\vp \cls  \subseteq \cls.
\]
The second question concerns infinite Blaschke products. In the case of finite Blaschke products that do not vanish at the origin, we observed that finite cyclic groups play a central role in characterizing the self-maps that leave the corresponding finite-dimensional model spaces invariant. Some of these results also make use of elementary tools, such as the prime factorization of natural numbers that arise as the cardinality of the zero set of finite Blaschke products. The natural next step is to explore this (or enhanced) phenomenon for model spaces associated with infinite Blaschke products that also do not vanish at the origin.

It is anticipated that group-theoretic tools will again be relevant, though precisely how they will be used remains an open and interesting problem. We expect a deeper interplay between group theory and analytic function theory to be essential in developing a complete understanding.

\bigskip

\noindent {\bf Acknowledgments:} The research of the second named author is supported in part by TARE (TAR/2022/000063) by SERB, Department of Science \& Technology (DST), Government of India. The third author is supported by a funded research project on ``Research in Operator Theory" (No. P2022-4370) by the National University of Mongolia.

\end{document}